\DeclareMathOperator{\cd}{cd}
\DeclareMathOperator{\Coker}{Coker}
\DeclareMathOperator{\Hom}{Hom}
\DeclareMathOperator{\id}{id}
\DeclareMathOperator{\Img}{Im}
\DeclareMathOperator{\invlim}{\varprojlim}
\DeclareMathOperator{\Ker}{Ker}
\DeclareMathOperator{\res}{res}
\DeclareMathOperator{\GL}{GL}
\DeclareMathOperator{\trg}{trg}
\begin{document}

\newtheorem{thm}{Theorem}[section]
\newtheorem{cor}[thm]{Corollary}
\newtheorem{lem}[thm]{Lemma}
\newtheorem{prop}[thm]{Proposition}
\newtheorem{defin}[thm]{Definition}
\newtheorem{exam}[thm]{Example}
\newtheorem{examples}[thm]{Examples}
\newtheorem{rem}[thm]{Remark}
\newtheorem*{thmA}{Theorem A}
\newtheorem*{thmA'}{Theorem A'}
\newtheorem*{thmB}{Theorem B}
\swapnumbers
\newtheorem{rems}[thm]{Remarks}
\newtheorem*{acknowledgment}{Acknowledgment}
\numberwithin{equation}{section}

\newcommand{\dec}{\mathrm{dec}}
\newcommand{\dirlim}{\varinjlim}
\newcommand{\nek}{,\ldots,}
\newcommand{\inv}{^{-1}}
\newcommand{\isom}{\cong}
\newcommand{\Massey}{\mathrm{Massey}}
\newcommand{\ndiv}{\hbox{$\,\not|\,$}}
\newcommand{\pr}{\mathrm{pr}}
\newcommand{\tensor}{\otimes}
\newcommand{\U}{\mathrm{U}}
\newcommand{\alp}{\alpha}
\newcommand{\gam}{\gamma}
\newcommand{\del}{\delta}
\newcommand{\eps}{\epsilon}
\newcommand{\lam}{\lambda}
\newcommand{\Lam}{\Lambda}
\newcommand{\sig}{\sigma}
\newcommand{\dbF}{\mathbb{F}}
\newcommand{\dbZ}{\mathbb{Z}}

\title[The Zassenhaus Filtration]
{The Zassenhaus filtration,  Massey Products, and Representations of Profinite Groups}

\author{Ido Efrat}
\address{Mathematics Department\\
Ben-Gurion University of the Negev\\
P.O.\ Box 653, Be'er-Sheva 84105\\
Israel} \email{efrat@math.bgu.ac.il}

\thanks{This work was supported by the Israel Science Foundation (grants No.\ 23/09 and No.\ 152/13).}

\keywords{Zassenhaus filtration, Massey products, upper-triangular unipotent representations, profinite groups, absolute Galois groups,
Galois cohomology}
\subjclass[2010]{Primary 12G05; Secondary 12F10, 12E30}

\maketitle
\begin{abstract}
We consider the $p$-Zassenhaus filtration $(G_n)$ of a profinite group $G$.
Suppose that $G=S/N$ for a free profinite group $S$ and a normal subgroup $N$ of $S$ contained in $S_n$.
Under a cohomological assumption on the $n$-fold Massey products (which holds, e.g., if $G$ has $p$-cohomological dimension $\leq1$),
we prove that $G_{n+1}$ is the intersection of all
kernels of upper-triangular unipotent $(n+1)$-dimensional representations of $G$ over $\dbF_p$.
This extends earlier results by Min\'a\v c, Spira, and the author on the structure of absolute Galois groups of fields.
\end{abstract}

\section{Introduction}
Let $p$ be a fixed prime number.
The \textbf{$p$-Zassenhaus filtration} of a profinite group $G$ is the fastest  descending sequence of closed subgroups
$G_n$, $n=1,2\nek$ of $G$ such that $G_1=G$, $G_i^p\leq G_{ip}$, and $[G_i,G_j]\leq G_{i+j}$ for $i,j\geq1$.
Thus $G_n=G_{\lceil n/p\rceil}^p\prod_{i+j=n}[G_i,G_j]$ for $n\geq2$.
Here given closed subgroups $H,K$ of $G$, we write $[H,K]$ (resp., $H^p$) for the closed subgroup
generated by all commutators  $[\sig,\tau]$ (resp., $p$-th powers $\sig^p$), with $\sig\in H$, $\tau\in K$.
This filtration has been studied since the middle of the 20th century both from the group-algebra and Lie algebra
viewpoints (\cite{Jennings41}, \cite{Lazard54}, \cite{Lazard65}, \cite{Quillen68}, \cite{Zassenhaus39},  \cite{DixonDuSautoyMannSegal99}*{Ch.\ 11--12}),
and had important Galois-theoretic applications, e.g., to the Golod--\v Safarevi\v c problem \cite{Koch02}*{\S7.7},
mild groups, and Galois groups of restricted ramification (\cite{Labute05}, \cite{LabuteMinac11},
\cite{Morishita04}, \cite{Vogel05}).

In this paper we interpret the $p$-Zassenhaus filtration from the viewpoint of linear representations over $\dbF_p$,
and use this to explain and generalize several known facts on the structure of absolute Galois groups of fields.
Our first main result characterizes the filtration for free pro-$p$ groups:

\begin{thmA}
Let $\bar S$ be a free pro-$p$ group and $n\geq1$.
Then $\bar S_n$ is the intersection of all kernels of linear representations
$\rho\colon \bar S\to \GL_n(\dbF_p)$.
\end{thmA}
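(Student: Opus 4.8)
The plan is to prove the two inclusions of the asserted equality separately. For the inclusion $\bar S_n\subseteq\bigcap_\rho\Ker\rho$, I first note that for any continuous homomorphism $\rho\colon\bar S\to\GL_n(\dbF_p)$ the image $\rho(\bar S)$ is a finite $p$-group, hence lies in a Sylow $p$-subgroup of $\GL_n(\dbF_p)$; since these are precisely the conjugates of the upper-triangular unipotent group $\U_n(\dbF_p)$, and conjugation is an automorphism of $\GL_n(\dbF_p)$, we may assume $\rho(\bar S)\leq\U_n(\dbF_p)$. Writing $\mathfrak u\subseteq M_n(\dbF_p)$ for the ideal of strictly upper-triangular matrices and $U^{(k)}=1+\mathfrak u^k$, a direct matrix computation gives $[U^{(i)},U^{(j)}]\leq U^{(i+j)}$ and $\bigl(U^{(i)}\bigr)^p\leq U^{(ip)}$ (using $(1+a)^p=1+a^p$ in characteristic $p$ and $(\mathfrak u^i)^p=\mathfrak u^{ip}$). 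As $U^{(1)}=\U_n(\dbF_p)$, minimality of the Zassenhaus filtration yields $\bigl(\U_n(\dbF_p)\bigr)_k\leq U^{(k)}$ for all $k$, and in particular $\bigl(\U_n(\dbF_p)\bigr)_n\leq 1+\mathfrak u^n=\{1\}$ since $\mathfrak u^n=0$ for $n\times n$ matrices. Finally the $p$-Zassenhaus filtration is functorial (an easy induction via $G_k=G_{\lceil k/p\rceil}^p\prod_{i+j=k}[G_i,G_j]$), so $\rho(\bar S_n)\leq\rho(\bar S)_n\leq\bigl(\U_n(\dbF_p)\bigr)_n=\{1\}$, i.e.\ $\bar S_n\leq\Ker\rho$.

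For the reverse inclusion I would use the classical Magnus identification of the completed group algebra of a free pro-$p$ group $\bar S$ on a basis $\{x_\alpha\}$ with the ring $\dbF_p\langle\langle x_\alpha\rangle\rangle$ of non-commutative formal power series, under which $\bar S$ embeds into $1+I$ and the $p$-Zassenhaus filtration is recovered as $\bar S_n=\bar S\cap(1+I^n)$, where $I$ is the augmentation ideal (\cite{Jennings41}, \cite{Lazard65}, \cite{DixonDuSautoyMannSegal99}*{Ch.\ 11--12}). Thus an element $g\notin\bar S_n$ has, in its power-series expansion, a nonzero coefficient $c_w$ at some monomial $w=x_{\alpha_1}\cdots x_{\alpha_d}$ of length $d\leq n-1$. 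To detect this, define $\rho_w\colon\bar S\to\U_{d+1}(\dbF_p)$ on generators by $\rho_w(x_\alpha)=I+\sum_{j=1}^{d}\delta_{\alpha,\alpha_j}E_{j,j+1}$; this is the restriction to $\bar S$ of the continuous $\dbF_p$-algebra homomorphism $\dbF_p\langle\langle x_\alpha\rangle\rangle\to M_{d+1}(\dbF_p)$ sending $x_\alpha\mapsto\sum_j\delta_{\alpha,\alpha_j}E_{j,j+1}$ (it kills monomials of length $\geq d+1$, hence extends to power series), and the multiplication rule $E_{i,i+1}E_{j,j+1}=\delta_{i+1,j}E_{i,j+1}$ shows that the $(1,d+1)$-entry of $\rho_w(g)$ equals $c_w\neq0$. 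Padding $\rho_w$ by the identity to an $n$-dimensional representation of $\bar S$ then produces a $\rho$ with $g\notin\Ker\rho$, so $\bigcap_\rho\Ker\rho\subseteq\bar S_n$. (In fact the single family $\{\rho_w:|w|\leq n-1\}$ already cuts out $\bar S_n$.)

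The main obstacle is the reverse inclusion, and within it the one classical input that the $p$-Zassenhaus filtration of a free pro-$p$ group is $\bar S_n=\bar S\cap(1+I^n)$; granting this, constructing the representations $\rho_w$ and verifying the entry computation is routine linear algebra, and the first inclusion is essentially formal. A minor point to attend to is the infinitely generated case: each $\rho_w$ depends only on the finitely many generators occurring in $w$, so its continuity is automatic from the universal property of the free pro-$p$ group, and the argument for a given $g\notin\bar S_n$ may be carried out inside a suitable finitely generated free pro-$p$ quotient of $\bar S$.
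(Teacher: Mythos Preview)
Your proof is correct. Both inclusions are sound: the first follows from the elementary observation that the filtration $U^{(k)}=1+\mathfrak u^k$ on $\U_n(\dbF_p)$ satisfies the Zassenhaus axioms and terminates at level $n$, together with functoriality; the second is a clean application of the Jennings--Brauer identification $\bar S_n=\bar S\cap(1+I^n)$ and the explicit representation $\rho_w$ picking out the coefficient of $w$. The entry computation is correct because each $N_\alpha=\sum_j\delta_{\alpha,\alpha_j}E_{j,j+1}$ lies on the \emph{first} superdiagonal, so a product of $m$ such matrices has nonzero entries only at positions $(i,i+m)$; hence only monomials of length exactly $d$ contribute to the $(1,d+1)$-entry, and among those only $w$ itself.

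Your route, however, differs substantially from the paper's. The paper does not prove Theorem~A directly: it derives it as the special case $\cd_p(G)\leq1$ of Theorem~A$'$ (Corollary~\ref{cd leq 1}), and the proof of Theorem~A$'$ passes through the full Massey-product machinery --- the duality of Theorem~B between $S_n/S_{n+1}$ and $H^2(S/S_n)_{n\text{-}\Massey}$, Dwyer's dictionary between defining systems and homomorphisms to $\bar\U_{n+1}$, and the cohomological criterion of Proposition~\ref{EM2 Prop 32}. Interestingly, the individual ingredients of your argument do appear in the paper: your $\rho_w$ is exactly the homomorphism $\gamma_1$ of Lemma~\ref{uuu}, your first inclusion is the Corollary to Proposition~\ref{factoring lemma}, and the Jennings--Brauer input is Proposition~\ref{Jennings-Brauer}. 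But the paper uses these as building blocks toward the more general Theorem~A$'$, whereas you assemble them directly into a short proof of Theorem~A. What your approach buys is brevity and the avoidance of cohomology; what the paper's approach buys is the generalization to profinite groups that are not free pro-$p$ but satisfy the Massey kernel condition, which is the paper's real goal.
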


We may clearly replace here $\GL_n(\dbF_p)$ by its $p$-Sylow subgroup $\U_n(\dbF_p)$, consisting of
all upper-triangular $n\times n$ unipotent matrices over $\dbF_p$.
In this reformulation, Theorem A extends to general profinite groups $G$, whose defining relations lie sufficiently
high in the $p$-Zassenhaus filtration, and which satisfy a certain cohomological condition, to be explained below.

Specifically, let $n\geq2$, and suppose that $G$ can be presented as $S/N$ for a free \textsl{profinite} group $S$ and
a closed normal subgroup $N$ of $S$ contained in $S_n$ (this can be slightly relaxed - see Remark \ref{relaxed assumption}).
The cohomological condition we assume is on the \textsl{$n$-fold Massey products} in the
mod-$p$ profinite cohomology ring $H^\bullet(G/G_n)=H^\bullet(G/G_n,\dbZ/p)$.
These are maps $H^1(G/G_n)^n\to H^2(G/G_n)$,
that generalize the usual cup product (which is essentially the case $n=2$).
According to their general construction (recalled in \S\ref{section on Massey products}),
the Massey products are multi-valued maps.
Yet, it was shown by Vogel \cite{Vogel05}  that in our situation they are well-defined maps
(see  \S\ref{section on Massey products for inhomogenous cochains}).
Moreover, they are multi-linear.
Let $H^2(G/G_n)_{n-\Massey}$ be the subgroup of $H^2(G/G_n)$ generated by the image of this map.
We say that $G$ satisfies the \textbf{$n$-th Massey kernel condition} if the
kernel of the inflation map $\inf\colon H^2(G/G_n)_{n-\Massey}\to H^2(G)$
is generated by $n$-fold Massey products from $H^1(G/G_n)^n$.
When $n=1$ we declare the condition to be true by definition.
We prove:

\begin{thmA'}
If $G$ satisfies the $n$-th Massey kernel condition, then $G_{n+1}$ is the intersection of all
kernels of representations $G\to\U_{n+1}(\dbF_p)$.
\end{thmA'}

The assumptions of Theorem A' are satisfied when $G$ has $p$-cohomological dimension $\leq1$
(Corollary \ref{cd leq 1}).
Hence Theorem A is a special case of Theorem A'.

Theorem A' is  elementary for $n=1$.
For $n=2$, $p=2$ it was proved in \cite{EfratMinac11a}*{Cor.\ 11.3}, generalizing a result of Min\'a\v c and Spira
\cite{MinacSpira96}*{Cor.\ 2.18}  (see also \cite{Thong12}).
For $n=2$, $p>2$ it was proved in \cite{EfratMinac11b}*{Example 9.5(1)}.
Moreover, it has the following remarkable Galois-theoretic application (which was obtained in \cite{MinacSpira96}*{Cor.\ 2.18} and  \cite{EfratMinac11b}*{Example 9.5(1)}; see \S\ref{section on examples}):
Assume that  $G=G_K$ is the absolute Galois group of a field $K$ containing a root of unity of order $p$.
Then $G_3$ is the intersection of all normal open subgroups $M$ of $G$ such that $G/M$ embeds
in $\U_3(\dbF_p)$.
The proof of this fact uses the Merkurjev--Suslin theorem \cite{MerkurjevSuslin82}.
Note that $\U_3(\dbF_p)$ is either the dihedral group $D_4$ (when $p=2$)
or the Heisenberg group $H_{p^3}$ (when $p>2$).

The proof of Theorem A' is based on an alternative description of the $p$-Zassenhaus filtration
in terms of Magnus algebra $\dbF_p\langle\langle X_A\rangle\rangle$ of formal power series over $\dbF_p$
in non-commuting variables $X_a$, where $a$ ranges over a basis $A$ of $S$.
The \textsl{Magnus homomorphism} $\Lam\colon S\to \dbF_p\langle\langle X_A\rangle\rangle^\times$
is defined by $a\mapsto 1+X_a$  (see \S\ref{section on free profinite groups}).
Then $S_n$ is the preimage under $\Lam$
of the multiplicative group of all formal power series $1+\sum_{|w|\geq n}c_wX_w$ (Proposition \ref{Jennings-Brauer}).

Behind the proof of Theorem A' is also a key observation from \cite{EfratMinac11b}, relating
such intersection results with duality principles between profinite groups and subgroups of second
cohomology groups (see Proposition \ref{EM2 Prop 32} for the precise statement).
In our case we consider the subgroup  $H^2(S/S_n)_{n-\Massey}$ of $H^2(S/S_n)$ and prove:

\begin{thmB}
\begin{enumerate}
\item[(a)]
There is a natural perfect pairing
\[
S_n/S_{n+1}\times H^2(S/S_n)_{n-\Massey}\to \dbZ/p.
\]
\item[(b)]
There is a natural exact sequence
\[
0\to H^2(S/S_n)_{n-\Massey}\hookrightarrow H^2(S/S_n)\to H^2(S/S_{n+1}).
\]
\end{enumerate}
\end{thmB}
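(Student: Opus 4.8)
The plan is to reduce both parts to a single computation of the transgression homomorphism of the central extension
\[
1\longrightarrow S_n/S_{n+1}\longrightarrow S/S_{n+1}\longrightarrow S/S_n\longrightarrow 1 .
\]
This extension is central, with elementary abelian kernel, since $[S_n,S]\le S_{n+1}$ and $S_n^p\le S_{np}\le S_{n+1}$ (the latter using $n\ge2$); hence $S/S_n$ acts trivially on $H^1(S_n/S_{n+1})=\Hom(S_n/S_{n+1},\dbZ/p)$, and the five-term inflation--restriction sequence becomes
\[
0\to H^1(S/S_n)\xrightarrow{\inf}H^1(S/S_{n+1})\xrightarrow{\res}\Hom(S_n/S_{n+1},\dbZ/p)\xrightarrow{\trg}H^2(S/S_n)\xrightarrow{\inf}H^2(S/S_{n+1}) .
\]
Every continuous homomorphism $S\to\dbZ/p$ annihilates $S_2\supseteq S_n$ (again because $n\ge2$), so $\res=0$; thus $\trg$ is injective and $\Img(\trg)=\Ker\bigl(\inf\colon H^2(S/S_n)\to H^2(S/S_{n+1})\bigr)$. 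Consequently part~(b) is equivalent to the identity $\Img(\trg)=H^2(S/S_n)_{n-\Massey}$, and, granting this, part~(a) follows: the pairing $(\sigma,\trg(f))\mapsto f(\sigma)$ on $S_n/S_{n+1}\times H^2(S/S_n)_{n-\Massey}$ is well defined (by injectivity of $\trg$) and exhibits $H^2(S/S_n)_{n-\Massey}$ as the Pontryagin dual of $S_n/S_{n+1}$; since $S_n/S_{n+1}$ is a profinite $\dbF_p$-module, hence reflexive, the pairing is perfect.

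To prove the displayed identity I would compute $\trg$ through the Magnus embedding. By Proposition~\ref{Jennings-Brauer}, $\Lam$ induces compatible embeddings $\bar\Lam\colon S/S_m\hookrightarrow 1+J_m$, where $J_m$ denotes the augmentation ideal of $\dbF_p\langle X_A\rangle$ reduced modulo the monomials of degree $\ge m$; moreover $\bar\Lam$ carries $S_n/S_{n+1}$ into $\Ker\bigl((1+J_{n+1})\to(1+J_n)\bigr)$, which — since monomials of degree $\ge n$ multiply into degree $\ge 2n\ge n+1$ — is just the additive group $V:=\bigoplus_{|w|=n}\dbF_p X_w$. Fix a set section $s\colon S/S_n\to S/S_{n+1}$, put $c(g,h)=s(g)s(h)s(gh)^{-1}$, so that $\trg(f)=[f\circ c]$, and for a word $w$ write $\lam_w(g)$ for the coefficient of $X_w$ in $\bar\Lam(g)$, which is well defined on $S/S_n$ when $|w|\le n-1$. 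Expanding $\bar\Lam(c(g,h))=\bar\Lam(s(g))\,\bar\Lam(s(h))\,\bar\Lam(s(gh))^{-1}$ and reading off the degree-$n$ part gives, in the coordinates $V$,
\[
c(g,h)=\sum_{|z|=n}\Bigl((d\nu_z)(g,h)+\sum_{\substack{z=w_1w_2\\ w_1,w_2\neq\emptyset}}\lam_{w_1}(g)\,\lam_{w_2}(h)\Bigr)X_z ,
\]
where $\nu_z(g)$ is the coefficient of $X_z$ in $\bar\Lam(s(g))$ and $d$ is the inhomogeneous coboundary. Hence, for any $\dbF_p$-linear functional $\tilde f=\sum_{|z|=n}\tilde f_z X_z^{*}$ that agrees with $f$ on $S_n/S_{n+1}\subseteq V$, the cocycle $f\circ c$ (which equals $\tilde f\circ c$) is cohomologous to $(g,h)\mapsto\sum_{|z|=n}\tilde f_z\sum_{z=w_1w_2}\lam_{w_1}(g)\,\lam_{w_2}(h)$.

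It remains to recognise the inner cocycles as Massey products. Multiplicativity of $\bar\Lam$ yields the identity $d\lam_w=-\sum_{w=w_1w_2,\,w_i\neq\emptyset}\lam_{w_1}\cup\lam_{w_2}$, so for a length-$n$ word $z=z_1\cdots z_n$ the family $\{\lam_{z_i\cdots z_j}\}_{1\le i\le j\le n,\ (i,j)\neq(1,n)}$ (with $\lam_{z_i}=\chi_{z_i}$, the character dual to $z_i$) is a defining system for the $n$-fold Massey product, whose associated $2$-cocycle is exactly $(g,h)\mapsto\sum_{z=w_1w_2}\lam_{w_1}(g)\lam_{w_2}(h)$. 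As these Massey products are single-valued here (Vogel's theorem, applied to $S/S_n=S/N$ with $N=S_n$), we conclude that $\trg(f)=\sum_{|z|=n}\tilde f_z\langle\chi_{z_1}\nek\chi_{z_n}\rangle\in H^2(S/S_n)_{n-\Massey}$; conversely, each $\sum_{|z|=n}c_z\langle\chi_{z_1}\nek\chi_{z_n}\rangle$ in $H^2(S/S_n)_{n-\Massey}$ equals $\trg$ of the restriction of $\sum_z c_z X_z^{*}$ to $S_n/S_{n+1}$. (When $A$ is infinite these sums are finite, since every class in $H^1$ and every homomorphism $f$ factors through a finitely generated subquotient.) This gives $\Img(\trg)=H^2(S/S_n)_{n-\Massey}$, hence both (b) and (a).

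The main obstacle is the second and third paragraphs: the cochain-level bookkeeping matching the transgression cocycle with the Magnus ``shuffle'' cocycle on the nose — in particular, verifying that the higher Magnus coordinates $\lam_w$ form a genuine defining system, with the correct coboundary relations — together with ensuring that Vogel's single-valuedness applies exactly where needed (so that, e.g., $\trg(f)$ is independent of the chosen extension $\tilde f$ of $f$). The remaining ingredients — the five-term sequence, the vanishing of $\res$, and the reflexivity argument upgrading the injectivity of $\trg$ to a perfect pairing — are routine.
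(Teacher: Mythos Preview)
Your approach is correct and closely parallels the paper's, though the organization differs. The heart of both arguments is the same computation: the transgression of the Magnus coefficient $\eps_w|_{S_n}$ equals the Massey product $\psi_w=\langle\eps_{(a_1)},\ldots,\eps_{(a_n)}\rangle$. This is exactly the paper's Theorem~\ref{fundamental duality} (``fundamental duality''), proved there via Lemma~\ref{uuu} and the Dwyer dictionary of Lemma~\ref{homs and defining systems}; you redo this cochain computation directly from the Magnus expansion, which is equivalent.

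The organizational difference is this. You work with the five-term sequence of the extension $S_n/S_{n+1}\to S/S_{n+1}\to S/S_n$ and reduce both (a) and (b) to the single identity $\Img(\trg)=H^2(S/S_n)_{n-\Massey}$; (b) is then immediate from exactness at $H^2(S/S_n)$, and (a) is Pontryagin duality. The paper instead uses the five-term sequence of $S_n\to S\to S/S_n$ (exploiting $H^2(S)=0$) to obtain the pairing $(\cdot,\cdot)'$ of (\ref{the pairing (.)'}) on $S_n/S_n^p[S,S_n]\times H^2(S/S_n)$, then uses a commutative-square-of-pairings argument (Lemma~\ref{pairing of images}) to pass to $S_n/S_{n+1}\times H^2(S/S_n)_{n-\Massey}$, handles perfectness for infinite $A$ by the explicit limit Lemma~\ref{limits}, and deduces (b) from (a) via the general Proposition~\ref{EM2 Prop 32}. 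Your route is more economical for this particular theorem; the paper's route is set up so that the same machinery immediately yields the more general Theorem~\ref{generalized Thm B ver1} (arbitrary $N\le S_n$) and Theorem~\ref{generalized Thm B ver2}.

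Two small remarks on your write-up: (i) your $V$ should be the \emph{product} $\prod_{|w|=n}\dbF_p X_w$ rather than the direct sum (cf.\ \S\ref{section of the filtration S_{n,Z/m}}); a continuous $\tilde f$ then automatically has finite support, which is what makes your infinite-$A$ reduction work. (ii) The independence of $\trg(f)$ from the chosen extension $\tilde f$ does not require Vogel's uniqueness: $\trg(f)=[f\circ c]$ depends only on $f$ by definition, so your identity $\trg(f)=\sum_z\tilde f_z\,\psi_z$ already forces all such sums to agree.
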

See Corollary \ref{thm B for G} for a generalization of these facts to profinite groups $G$ as above.

Finally, a method of Dwyer  \cite{Dwyer75} expresses the central extensions corresponding to elements
of $H^2(S/S_n)_{n-\Massey}$ by means of linear representations into $\U_n(\dbZ/p)$ (see \S\ref{section on
Massey products for inhomogenous cochains}), leading to Theorem A'.

In the situation of Theorem A, where $G$ is a free pro-$p$ group, the cohomological picture is of course considerably simpler.
Indeed, the subsequent papers by Min\'a\v c and T\^an \cite{MinacTan13} and the author \cite{Efrat13} (independently) give a more direct proof of Theorem A based on the Magnus theory.
Further, Theorem A is extended there to various related filtrations of free pro-$p$ groups.

Fundamental connections between the $p$-Zassenhaus filtration and Massey products
were earlier observed and studied in the works of Dwyer \cite{Dwyer75},  Morishita \cite{Morishita04}, \cite{Morishita12} and Vogel  \cite{Vogel05};  see also \cite{Stallings65}*{\S5}.
In fact, in the case $p=2$, $n=3$, related connections were earlier studied in \cite{GaoLeepMinacSmith03}
under a different terminology (this was pointed out to me by J\'an Min\'a\v c).
Among the other recent important works on Massey products in Galois theory and arithmetic geometry are
those by Sharifi \cite{Sharifi07}, Wickelgren, Hopkins (\cite{Wickelgren09}, \cite{Wickelgren12}, \cite{HopkinsWickelgren12}), and G\"artner \cite{Gaertner12}.

I warmly thank J\'an Min\'a\v c for many very inspiring discussions which motivated the present work
and for his comments.
I also thank the referee for his/her insightful remarks and suggestions, as well as to A.\ Lubotzky, M.\ Schacher, N.D.\ T\^an, and K.\ Wickelgren for their comments.

\section{Preliminaries on bilinear pairings}
First we recall some terminology and facts on bilinear maps.
We fix a commutative ring $R$.
A bilinear map $(\cdot,\cdot)\colon A\times B\to R$ of $R$-modules $A$ and $B$ is
\textbf{non-degenerate} (resp., a \textbf{perfect pairing}) if the induced $R$-module
homomorphisms $A\to\Hom_R(B,R)$ and $B\to\Hom_R(A,R)$ are injective (resp., bijective).
We say that a diagram of bilinear pairings and $R$-homomorphisms
\begin{equation}
\label{cd of pairings}
\xymatrix{
A & *-<3pc>{\times} & B\ar[d]^{\beta}\ar[r]^{(\cdot,\cdot)} & R\ar@{=}[d] \\
A'\ar[u]^{\alp} & *-<3pc>{\times} & B'\ar[r]^{(\cdot,\cdot)'} & R
}
\end{equation}
\textbf{commutes}  if $(\alp(a'),b)=(a',\beta(b))'$ for every $a'\in A'$ and $b\in B$.

The proofs of the next two lemmas are straightforward:

\begin{lem}
\label{limits}
Let $(I,\leq)$ be a filtered set and for every $i\in I$ let $A_i\times B_i\to R$ be a perfect pairing.
Let $\alp_{ij}\colon A_j\to A_i$, $\beta_{ij}\colon B_i\to B_j$, for $i,j\in I$ with $i\leq j$, be homomorphisms
which commute with the pairings, and are compatible in the natural sense.
Then there is an induced perfect pairing  $\invlim A_i\times\dirlim B_i\to R$.
\end{lem}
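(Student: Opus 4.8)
The plan is to build the induced pairing on the inverse/direct limit and then verify it is perfect. First I would recall that a perfect pairing $A_i\times B_i\to R$ is the same as a pair of mutually inverse isomorphisms $A_i\xrightarrow{\sim}\Hom_R(B_i,R)$ and $B_i\xrightarrow{\sim}\Hom_R(A_i,R)$, and that the stated compatibility of the $\alp_{ij}$ and $\beta_{ij}$ with the pairings says precisely that under these identifications $\alp_{ij}$ corresponds to the transpose of $\beta_{ij}$. Thus $(\Hom_R(B_i,R),\alp_{ij})_{i\in I}$ is an inverse system isomorphic to $(A_i,\alp_{ij})_{i\in I}$, and dually for the $B_i$. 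Define the pairing on the limits by $(\,(a_i)_i\,,\,[b_j]\,)=(a_j,b_j)$ for a representative $b_j\in B_j$ of a class in $\dirlim B_i$; one must check this is well defined (independent of the choice of representative $j$ and of $b_j$), which is immediate from the compatibility relation $(\alp_{ij}(a_j),b_i)=(a_j,\beta_{ij}(b_i))'$... more precisely from the relation as stated in \eqref{cd of pairings}, and bilinear over $R$ since each $(\cdot,\cdot)_i$ is.

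Next I would identify the two induced maps with limits of isomorphisms. On one side, the map $\invlim A_i\to\Hom_R(\dirlim B_i,R)$ is, under the canonical isomorphism $\Hom_R(\dirlim B_i,R)\isom\invlim\Hom_R(B_i,R)$, nothing but the inverse limit of the isomorphisms $A_i\xrightarrow{\sim}\Hom_R(B_i,R)$; since inverse limits are left exact, a limit of isomorphisms of inverse systems (over the same index set, commuting with the transition maps) is an isomorphism, so $\invlim A_i\xrightarrow{\sim}\Hom_R(\dirlim B_i,R)$. On the other side, the map $\dirlim B_i\to\Hom_R(\invlim A_i,R)$ is the direct limit of the isomorphisms $B_i\xrightarrow{\sim}\Hom_R(A_i,R)$ composed with the natural maps $\Hom_R(A_i,R)\to\Hom_R(\invlim A_j,R)$; since $I$ is filtered, direct limit is exact, so $\dirlim B_i\xrightarrow{\sim}\dirlim\Hom_R(A_i,R)$, and it remains to see that $\dirlim\Hom_R(A_i,R)\to\Hom_R(\invlim A_j,R)$ is an isomorphism.

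That last point is the main obstacle, because for a general inverse system of $R$-modules the natural map $\dirlim\Hom_R(A_i,R)\to\Hom_R(\invlim A_j,R)$ need not be bijective. The resolution is to use the hypothesis: each $A_i\isom\Hom_R(B_i,R)$, so the transition maps $\alp_{ij}$ are transposes of the $\beta_{ij}\colon B_i\to B_j$, and hence $\invlim A_i\isom\invlim\Hom_R(B_i,R)\isom\Hom_R(\dirlim B_i,R)$ as already noted; dualizing once more and using that $B_i$ is reflexive in the relevant sense (being itself the $R$-dual of $A_i$ with $A_i$ the $R$-dual of $B_i$), one gets $\Hom_R(\invlim A_i,R)\isom\Hom_R(\Hom_R(\dirlim B_i,R),R)$, into which $\dirlim B_i=\dirlim\Hom_R(A_i,R)$ maps; chasing the definitions shows this composite is exactly the natural map and that it is the direct limit of the biduality isomorphisms $B_i\xrightarrow{\sim}\Hom_R(\Hom_R(B_i,R),R)$ postcomposed with the iso from the first half, hence bijective. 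Assembling the two halves, both induced maps of the limit pairing are isomorphisms, so the pairing is perfect. A cleaner way to phrase the whole argument, which I would adopt in the write-up, is to observe from the start that $(a_i)_i\mapsto(a_i\mapsto(a_i,\cdot)_i)$ identifies the whole inverse system $(A_i,\alp_{ij})$ with $(\Hom_R(B_i,R),\beta_{ij}^{\mathrm t})$ and symmetrically for $B$, so that the statement reduces to the standard facts $\Hom_R(\dirlim B_i,R)\isom\invlim\Hom_R(B_i,R)$ and its counterpart, making the verification of well-definedness and bilinearity the only genuinely hands-on step.
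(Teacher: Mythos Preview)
The paper gives no proof here, declaring it straightforward. You correctly isolate the one nontrivial point: while $\Hom_R(\dirlim B_i,R)\isom\invlim\Hom_R(B_i,R)$ holds formally (so $\invlim A_i\to\Hom_R(\dirlim B_i,R)$ is an isomorphism), the map $\dirlim\Hom_R(A_i,R)\to\Hom_R(\invlim A_i,R)$ in the other direction is not automatic.

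Your resolution of that point, however, is circular. You reduce to showing that the biduality map $\dirlim B_i\to(\dirlim B_i)^{**}$ is an isomorphism and claim this is the direct limit of the isomorphisms $B_i\xrightarrow{\sim}(B_i)^{**}$ ``postcomposed with the iso from the first half.'' But the first half only gives $\invlim A_i\isom(\dirlim B_i)^*$; it says nothing about the comparison map $\dirlim(B_i)^{**}\to(\dirlim B_i)^{**}$, and that map is another instance of the very statement $\dirlim\Hom_R(C_i,R)\to\Hom_R(\invlim C_i,R)$ you are trying to establish (with $C_i=B_i^*\isom A_i$). In fact the assertion fails at the stated level of generality: take $R=\dbZ$, $I$ the natural numbers, $A_i=B_i=\dbZ$ with the multiplication pairing, and every transition map equal to multiplication by $2$; then $\invlim A_i=0$ while $\dirlim B_i=\dbZ[1/2]\neq0$, so the induced pairing cannot be perfect. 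The paper only invokes the lemma (in \S\ref{section on Proof of Theorem B}) with $R=\dbZ/m$ and all $A_i,B_i$ finite; in that setting $\invlim A_i$ is profinite, $\dirlim B_i$ is discrete torsion, and the missing isomorphism is Pontryagin duality, with $\Hom_R(\invlim A_i,R)$ understood as \emph{continuous} homomorphisms. You should add such hypotheses (finiteness of the $A_i,B_i$, or a topology on the limit) and appeal to that duality, rather than to a purely algebraic double-dual argument that cannot close.
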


\begin{lem}
\label{pairing of images}
Suppose that (\ref{cd of pairings}) commutes and the induced maps $A\to\Hom_R(B,R)$, $B'\to\Hom_R(A',R)$
are injective.
Then (\ref{cd of pairings}) induces a non-degenerate bilinear map $\Img(\alp)\times\Img(\beta)\to R$.
\end{lem}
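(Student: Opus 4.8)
The plan is to construct the induced pairing by a routine diagram chase and then read off non-degeneracy directly from the two injectivity hypotheses, one hypothesis handling each of the two variables. Concretely, for $a\in\Img(\alp)$ and $b'\in\Img(\beta)$ I would pick $a'\in A'$ with $\alp(a')=a$ and $b\in B$ with $\beta(b)=b'$, and set $\langle a,b'\rangle:=(a,b)$. Choosing such $a'$ and $b$ and applying the commutativity of (\ref{cd of pairings}) yields
\[
(a,b)=(\alp(a'),b)=(a',\beta(b))'=(a',b')'.
\]
The first expression $(a,b)$ does not involve $a'$, and the last expression $(a',b')'$ does not involve $b$; hence $\langle a,b'\rangle$ is independent of both choices and is well defined. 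Bilinearity is then inherited from the bilinearity of $(\cdot,\cdot)$ together with the additivity of $\alp$ and $\beta$, using that $\Img(\alp)\subseteq A$ and $\Img(\beta)\subseteq B'$ are submodules.

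For non-degeneracy I would verify the two conditions separately. Suppose $a=\alp(a')\in\Img(\alp)$ satisfies $\langle a,b'\rangle=0$ for all $b'\in\Img(\beta)$; since $\beta(b)\in\Img(\beta)$ for every $b\in B$, this gives $(a,b)=0$ for all $b\in B$, so $a$ lies in the kernel of the induced map $A\to\Hom_R(B,R)$, which is injective by hypothesis, whence $a=0$. Symmetrically, if $b'=\beta(b)\in\Img(\beta)$ satisfies $\langle a,b'\rangle=0$ for all $a\in\Img(\alp)$, then applying this to $a=\alp(a')$ for arbitrary $a'\in A'$ and using the identity above gives $(a',b')'=0$ for all $a'\in A'$, so $b'$ lies in the kernel of $B'\to\Hom_R(A',R)$, again injective, whence $b'=0$.

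I do not anticipate any genuine difficulty; the statement is essentially bookkeeping. The one point that requires the full hypothesis is the well-definedness of $\langle\cdot,\cdot\rangle$ on the images, and this is exactly where the commutativity of (\ref{cd of pairings}) is used: it reconciles the formula $(a,b)$, which is insensitive to the choice of preimage $a'\in A'$, with the formula $(a',b')'$, which is insensitive to the choice of preimage $b\in B$. Everything else follows immediately from bilinearity and the two injectivity assumptions.
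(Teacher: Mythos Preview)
Your argument is correct and complete; the paper itself omits the proof entirely, declaring it ``straightforward,'' so your write-up is exactly the routine verification the author had in mind. The only thing worth noting is that your careful well-definedness check (reconciling $(a,b)$ with $(a',b')'$ via commutativity) is the substantive step, and you have handled it cleanly.
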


\begin{lem}
\label{pairing of Coker and Ker}
Suppose that (\ref{cd of pairings}) commutes, $(\cdot,\cdot)$
is non-degenerate, the induced map $A'\mapsto\Hom_R(B',R)$ is surjective, and the $R$-module $B'$ is semi-simple.
Then (\ref{cd of pairings}) induces  a non-degenerate
bilinear map $\Coker(\alp)\times\Ker(\beta)\to R$.
\end{lem}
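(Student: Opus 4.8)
The plan is to chase the commutativity condition $(\alp(a'),b)=(a',\beta(b))'$ for all $a'\in A'$, $b\in B$, and produce a well-defined, non-degenerate pairing on the quotient $\Coker(\alp)=A/\Img(\alp)$ and the subobject $\Ker(\beta)\subseteq B$. First I would define the candidate pairing: for $\bar a\in\Coker(\alp)$ with lift $a\in A$ and for $b\in\Ker(\beta)$, set $(\bar a,b)''=(a,b)$. To see this is independent of the lift, suppose $a_1-a_2=\alp(a')$; then $(a_1,b)-(a_2,b)=(\alp(a'),b)=(a',\beta(b))'=(a',0)'=0$ since $b\in\Ker(\beta)$. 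So $(\cdot,\cdot)''$ is well defined and clearly bilinear.

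Next I would check non-degeneracy in the $\Ker(\beta)$-variable. Suppose $b\in\Ker(\beta)$ satisfies $(\bar a,b)''=0$ for all $\bar a\in\Coker(\alp)$; then $(a,b)=0$ for all $a\in A$, and since $(\cdot,\cdot)$ is non-degenerate, $b=0$. For the $\Coker(\alp)$-variable I would argue as follows. Suppose $\bar a\in\Coker(\alp)$ satisfies $(\bar a,b)''=0$ for every $b\in\Ker(\beta)$, i.e.\ $(a,b)=0$ for all $b\in\Ker(\beta)$. By semisimplicity of $R$-modules, $B=\Ker(\beta)\oplus C$ for some submodule $C$, and $\beta|_C\colon C\to B'$ is injective. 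Consider the linear functional $b\mapsto(a,b)$ on $B$; it vanishes on $\Ker(\beta)$, hence factors through $C\cong\Img(\beta)$, and thus defines an element of $\Hom_R(\Img(\beta),R)$, which extends (again using semisimplicity, so $\Img(\beta)$ is a direct summand of $B'$) to some $\varphi\in\Hom_R(B',R)$. By the surjectivity hypothesis on $A'\to\Hom_R(B',R)$, there is $a'\in A'$ with $\varphi=(a',\cdot)'$. Then for every $b\in B$ we get $(a,b)=\varphi(\beta(b))=(a',\beta(b))'=(\alp(a'),b)$, so $(a-\alp(a'),b)=0$ for all $b\in B$; non-degeneracy of $(\cdot,\cdot)$ forces $a=\alp(a')$, i.e.\ $\bar a=0$ in $\Coker(\alp)$.

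The main subtlety, and the only place the semisimplicity hypothesis is genuinely used, is the extension step: we must promote a functional defined only on $\Img(\beta)$ (obtained by pushing $(a,\cdot)$ through the factorization $B/\Ker(\beta)\cong\Img(\beta)$) to a functional on all of $B'$, so that the surjectivity assumption on $A'\to\Hom_R(B',R)$ can be applied. Semisimplicity makes both $\Ker(\beta)\subseteq B$ and $\Img(\beta)\subseteq B'$ into direct summands, which supplies both the factorization and the extension. Everything else is a routine diagram chase, and no finiteness or topological hypotheses are needed.
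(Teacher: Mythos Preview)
Your proof is correct and is essentially the paper's argument unwound by hand: the paper packages the hard direction (injectivity of $\Coker(\alp)\to\Hom_R(\Ker(\beta),R)$) as an application of the snake lemma to the diagram with rows $A'\to A\to\Coker(\alp)\to 0$ and $\Hom_R(B',R)\to\Hom_R(B,R)\to\Hom_R(\Ker(\beta),R)$, using semisimplicity precisely to get exactness of the lower row at the middle term---which is exactly your extension-of-functionals step. Your identification of where semisimplicity is genuinely needed matches the paper's use of it.
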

\begin{proof}
Set  $A''=\Coker(\alp)$ and $B''=\Ker(\beta)$.
The existence of an induced well-defined bilinear map $A''\times B''\to R$ is straightforward from the commutativity of (\ref{cd of pairings}).
Since $(\cdot,\cdot)$ is non-degenerate, the induced map $B''\to\Hom_R(A'',R)$ is injective.

Finally, there is a commutative diagram
\begin{equation}
\label{cd for snake}
\xymatrix{
 A'\ar[r]^{\alp}\ar@{->>}[d] &  A\ar[r]\ar@{>->}[d] & A''\ar[r]\ar[d] & 0 \\
 \Hom_R(B',R)\ar[r] &  \Hom_R(B,R)\ar[r] & \Hom_R(B'',R) & \quad,
 }
\end{equation}
where the upper row is clearly exact, and the lower row is a complex.
Moreover, the exact sequence
\[
0\to B''\to B\to\Img(\beta)\to 0
\]
gives an exact sequence
\begin{equation}
\label{efg}
0\to\Hom_R(\Img(\beta),R)\to\Hom_R(B,R)\to\Hom_R(B'',R).
\end{equation}
By the semi-simplicity assumption, $\Img(\beta)$ is a direct summand of $B'$.
Hence $\Hom_R(\Img(\beta),R)$ is a direct summand of $\Hom_R(B',R)$, so by (\ref{efg}), the lower row in (\ref{cd for snake}) is in fact exact.
Now the snake lemma shows that the right vertical map in (\ref{cd for snake}) is injective.
\end{proof}

\section{Massey products}
\label{section on Massey products}
Recall that a \textbf{differential $\dbZ$-graded algebra}  (DGA) over a ring $R$ is a graded $R$-algebra
$C^\bullet=\bigoplus_{r\in\dbZ}C^r$ equipped with homomorphisms
$\partial^r\colon C^r\to C^{r+1}$ such that $(C^\bullet,\bigoplus_{r\in\dbZ}\partial^r)$ is a complex satisfying the \textsl{Leibnitz rule}
$\partial^{r+s}(ab)=\partial^r(a)b+(-1)^ra\partial^s(b)$ for  $a\in C^r$, $b\in C^s$.
Let $H^i=\Ker(\partial^i)/\Img(\partial^{i-1})$ be the $i$-th cohomology group of $C^\bullet$.
For an $i$-cocycle $c$ let $[c]$ be its cohomology class in $H^i$.
Thus $[c]=[c']$ will mean that $c,c'$ are homogenous cocycles of equal degree and are cohomologous.

We fix  an integer $n\geq 2$.
We consider systems $c_{ij}\in C^1$, where $1\leq i\leq j\leq n$ and $(i,j)\neq(1,n)$.
For any $i,j$ satisfying $1\leq i\leq j\leq n$ (including $(i,j)=(1,n)$) we may define
\[
\widetilde c_{ij}=-\sum_{r=i}^{j-1}c_{ir}c_{r+1,j}\in C^2.
\]
One says that $(c_{ij})$ is a  \textbf{defining system of size $n$} in $C^\bullet$ if
$\widetilde c_{ij}=\partial c_{ij}$ for every $1\leq i\leq j\leq n$ with $(i,j)\neq(1,n)$.
We also say that the defining system $(c_{ij})$ is \textbf{on $c_{11}\nek c_{nn}$}.
Note that then $c_{ii}$ is a $1$-cocycle, $i=1,2\nek n$.
Further, $\widetilde c_{1n}$ is a $2$-cocycle (\cite{Kraines66}*{p.\ 432}, \cite{Fenn83}*{p.\ 233}, \cite{EfratMassey}).

\begin{lem}
\label{dltzf}
Suppose that for every list of cocycles $c_1\nek c_n\in C^1$ there is a defining system of size $n$ on $c_1\nek c_n$  in $C^\bullet$.
Let $1\leq k<n$.
Then for every list of cocycles $c_1\nek c_k\in C^1$ there is a defining system of size $k$ on $c_1\nek c_k$ such that in addition $[\widetilde c_{1k}]=0$.
\end{lem}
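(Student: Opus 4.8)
The plan is to reduce directly to the hypothesis by padding the given list with zero cocycles. Given $1$-cocycles $c_1\nek c_k\in C^1$ with $k<n$, I set $c_{k+1}=\cdots=c_n=0$, which are again $1$-cocycles, and invoke the assumption of the lemma to obtain a defining system $(d_{ij})$ of size $n$ on $c_1\nek c_n$. Thus $d_{ii}=c_i$ for $1\leq i\leq k$ (and $d_{ii}=0$ for $k<i\leq n$), and $\widetilde d_{ij}=\partial d_{ij}$ for every pair $1\leq i\leq j\leq n$ with $(i,j)\neq(1,n)$.

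Next I would restrict to the sub-family $(d_{ij})$ with $1\leq i\leq j\leq k$ and $(i,j)\neq(1,k)$, and check that it is a defining system of size $k$ on $c_1\nek c_k$. The one point to verify is that for $i\leq j\leq k$ the element $\widetilde d_{ij}=-\sum_{r=i}^{j-1}d_{ir}d_{r+1,j}$, computed within the size-$k$ system, is literally the same element as the corresponding one in the size-$n$ system; this is because every index appearing in that sum, namely $(i,r)$ and $(r+1,j)$ with $i\leq r\leq j-1\leq k-1$, has both coordinates $\leq k$, and none of these pairs equals $(1,k)$ (for $(i,r)$ this would force $r=k$, impossible; for $(r+1,j)$ it would force $r=0$, impossible). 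Hence $\widetilde d_{ij}=\partial d_{ij}$ for all $(i,j)\neq(1,k)$ with $1\leq i\leq j\leq k$, which is precisely the defining-system condition of size $k$.

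Finally, since $k<n$ we have $(1,k)\neq(1,n)$, so the size-$n$ condition also applies to the pair $(1,k)$ and gives $\widetilde d_{1k}=\partial d_{1k}$. Therefore the $2$-cocycle $\widetilde d_{1k}$ attached to the size-$k$ defining system $(d_{ij})$ is a coboundary, i.e.\ $[\widetilde d_{1k}]=0$, as required. I do not expect a genuine obstacle here: the substance of the argument is simply that adjoining zero cocycles is harmless (indeed one could pad with any cocycles and never use the padded terms), and the only point requiring a little care is the index bookkeeping in the second paragraph; the extreme case $k=1$, where the defining-system conditions of size $1$ are vacuous and $\widetilde d_{11}=0$ automatically, is subsumed under the same reasoning.
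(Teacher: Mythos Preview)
Your proof is correct and follows essentially the same approach as the paper: extend $c_1,\ldots,c_k$ to a list of length $n$ (the paper uses arbitrary cocycles, you use zeros, but as you note this is irrelevant), apply the hypothesis to get a size-$n$ defining system, restrict to indices $\leq k$, and observe that since $(1,k)\neq(1,n)$ the defining-system relation $\widetilde d_{1k}=\partial d_{1k}$ already holds. Your index bookkeeping in the second paragraph is more explicit than the paper's one-line claim, but the argument is the same.
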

\begin{proof}
Choose arbitrary cocycles $c_{k+1}\nek c_n\in C^1$ and a defining system $(c_{ij})$ on $c_1\nek c_k,c_{k+1}\nek c_n$.
Then $(c_{ij})$, $1\leq i\leq j\leq k$, $(i,j)\neq(1,k)$, is a defining system on $c_1\nek c_k$.
Moreover, $\widetilde c_{1k}=\partial c_{1k}$, so $[\widetilde c_{1k}]=0$.
\end{proof}

The main objective of this section is Proposition \ref{uniqueness} below.
Its proof will be based on the following fact from \cite{Fenn83}*{Lemma 6.2.7},  which is a variant of \cite{Kraines66}*{Lemma 20} (with different sign conventions).
We note that while the latter results are stated in a topological setting, their proofs are at the level of general DGAs.
A self-contained exposition of these results,  as well as of the multi-linearity of the Massey product (see below), is given in \cite{EfratMassey}.

\begin{prop}[Fenn, Kraines]
\label{Kraines}
Assume that for every $1\leq k<n$ and every defining system $(d_{ij})$ of size $k$ in $C^\bullet$ one has $[\widetilde d_{1k}]=0$.
Let $(c_{ij}),(c'_{ij})$ be defining systems of size $n$ in $C^\bullet$ with $[c_{ii}]=[c'_{ii}]$, $i=1,2\nek n$.
Then $[\widetilde c_{1n}]=[\widetilde c'_{1n}]$.
\end{prop}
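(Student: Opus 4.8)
The proof will proceed by induction on $n$, reducing the problem of comparing two defining systems of size $n$ to the situation handled by the Fenn--Kraines hypothesis for smaller sizes. The plan is to set $e_{ij}=c_{ij}-c'_{ij}$ for $1\leq i\leq j\leq n$ with $(i,j)\neq(1,n)$, measuring the difference of the two defining systems, and then to show that $\widetilde c_{1n}-\widetilde c'_{1n}$ is a coboundary. First I would record, using $[c_{ii}]=[c'_{ii}]$, that each diagonal difference $e_{ii}$ is itself a coboundary, say $e_{ii}=\partial b_i$ for some $b_i\in C^0$; this will be the raw material for building a homotopy between the two defining systems.

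The key step is to construct, inductively on $j-i$, elements $b_{ij}\in C^1$ (for the off-diagonal pairs) together with $b_{ii}=b_i$ above, so that the ``shifted'' system $c_{ij}'' = c'_{ij}+\partial b_{ij} + (\text{correction terms built from } c', b)$ realizes a defining system with the same diagonal cocycles as $(c'_{ij})$ but whose top entry $\widetilde c_{1n}''$ is cohomologous to $\widetilde c_{1n}$ — indeed, one wants $c_{ij}''=c_{ij}$ on the nose for all $(i,j)\neq(1,n)$, forcing $\widetilde c_{1n}''=\widetilde c_{1n}$. At each stage, solving for $b_{ij}$ requires that a certain explicitly-given $2$-cochain (assembled from the lower entries $e_{i,r}$, $e_{r+1,j}$, the $c'$-entries, and the already-chosen $b$'s of smaller width) be a coboundary. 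The Leibnitz rule computation shows this $2$-cochain is a cocycle, and the hypothesis ``$[\widetilde d_{1k}]=0$ for every defining system of size $k<n$'' is precisely what guarantees it is a coboundary: one packages the relevant data into an auxiliary defining system of size $j-i+1\leq n-1$ (or a small variant thereof) whose top Massey cocycle is exactly the obstruction. This is the step I expect to be the main obstacle — getting the bookkeeping of sign conventions and correction terms right so that the obstruction $2$-cocycle at width $w$ genuinely coincides with $\widetilde d_{1,w}$ of an honest size-$w$ defining system, rather than merely resembling one.

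Once all the $b_{ij}$ are constructed, the final computation is bookkeeping: at width $n$ the analogous $2$-cochain $\widetilde c_{1n}-\widetilde c_{1n}'' = \widetilde c_{1n}-\widetilde c'_{1n}-\partial(\text{stuff})$ must be shown to be a coboundary, but now the ``stuff'' is precisely $-\partial b_{1n}$ for whatever $b_{1n}$ the width-$n$ obstruction would demand — except we do not assume size-$n$ defining systems have vanishing top product, so instead we simply observe that $\widetilde c_{1n}''=\widetilde c_{1n}$ by construction and $[\widetilde c_{1n}'']=[\widetilde c'_{1n}]$ follows from the homotopy, giving $[\widetilde c_{1n}]=[\widetilde c'_{1n}]$. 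In writing this up I would either follow Fenn's Lemma 6.2.7 or Kraines' Lemma 20 verbatim (adjusting signs to the sign convention fixed in this section via $\widetilde c_{ij}=-\sum_r c_{ir}c_{r+1,j}$), or cite \cite{EfratMassey} for the self-contained DGA-level argument; the only real content beyond those references is checking that the sign conventions here match, which I would do by a short direct verification in the base case $n=2$ (where the statement is just that the cup product is well-defined on cohomology) and then trusting the inductive machinery.
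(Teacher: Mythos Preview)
The paper does not give its own proof of this proposition: it is stated as a fact imported from \cite{Fenn83}*{Lemma 6.2.7} (a variant of \cite{Kraines66}*{Lemma 20}), with the remark that a self-contained DGA-level exposition appears in \cite{EfratMassey}. So there is nothing to compare against beyond those citations --- and you already anticipate exactly this in your final paragraph.

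Your sketch of the underlying argument is in the right spirit, but the step you flag as the main obstacle is indeed where the details matter. In the standard proofs the reduction is not done by building a single homotopy $(b_{ij})$ all at once; rather one changes the defining system one entry at a time. First one fixes the diagonal cocycles and shows that two defining systems with literally equal $c_{ii}=c'_{ii}$ give cohomologous $\widetilde c_{1n}$: altering a single off-diagonal $c_{ij}$ by a cocycle $z$ changes $\widetilde c_{1n}$ by a sum of terms each of which is (up to sign) the top cocycle of an honest defining system of size $<n$ on $c_{11},\ldots,z,\ldots,c_{nn}$, hence a coboundary by hypothesis. Only after that does one show that replacing some $c_{ii}$ by $c_{ii}+\partial b$ can be compensated by adjusting the neighbouring off-diagonal entries. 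Your all-at-once homotopy would have to reproduce both of these mechanisms simultaneously, and the obstruction cocycle at width $w$ will in general be a \emph{sum} of several size-$(<n)$ Massey cocycles rather than a single $\widetilde d_{1w}$; so ``packaging the data into an auxiliary defining system'' is not quite accurate as stated. If you intend to write out a proof rather than cite, it is cleaner to follow the one-entry-at-a-time argument.
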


\begin{prop}
\label{uniqueness}
Suppose that for every list of cocycles $c_1\nek c_n\in C^1$ there is a defining system on $c_1\nek c_n$ in $C^\bullet$.
Then for any two defining systems $(c_{ij}),(c'_{ij})$ of size $n$ with $[c_{ii}]=[c'_{ii}]$, $i=1,2\nek n$, one has $[\widetilde c_{1n}]=[\widetilde c'_{1n}]$.
\end{prop}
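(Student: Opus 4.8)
The plan is to bootstrap Proposition~\ref{Kraines} by verifying its hypothesis from ours. Proposition~\ref{Kraines} assumes that for every $1 \leq k < n$ and \emph{every} defining system $(d_{ij})$ of size $k$ in $C^\bullet$ one has $[\widetilde d_{1k}] = 0$; our Proposition~\ref{uniqueness} assumes only that defining systems of size $n$ \emph{exist} on arbitrary $1$-cocycles $c_1 \nek c_n$. So the first task is to deduce the stronger vanishing hypothesis from the weaker existence hypothesis. I would do this by downward induction on $k$, from $k = n-1$ down to $k = 1$.

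For the base of the argument, fix $1 \leq k < n$ and an arbitrary defining system $(d_{ij})$ of size $k$ on $1$-cocycles $c_1 = d_{11} \nek c_k = d_{kk}$. Lemma~\ref{dltzf} (whose hypothesis is exactly our hypothesis) furnishes \emph{some} defining system $(c_{ij})$ of size $k$ on the same cocycles $c_1 \nek c_k$ with the extra property $[\widetilde c_{1k}] = 0$. Now I want to invoke Proposition~\ref{Kraines} with $n$ replaced by $k$, applied to the two defining systems $(d_{ij})$ and $(c_{ij})$ of size $k$: they have matching diagonal classes $[d_{ii}] = [c_{ii}]$, so the conclusion gives $[\widetilde d_{1k}] = [\widetilde c_{1k}] = 0$. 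But to apply Proposition~\ref{Kraines} at level $k$ I need its hypothesis at level $k$, namely that every defining system of size $\ell$ has $[\widetilde d_{1\ell}] = 0$ for all $1 \leq \ell < k$ --- and that is precisely the inductive hypothesis if I run the induction downward from $k = n - 1$. Wait: running downward, when I handle level $k$ I have already handled levels $k+1 \nek n-1$, not levels $< k$. Let me reorganize: the induction should be \emph{upward} on $k$, from $k = 1$. For $k = 1$ a defining system is just a single $1$-cocycle $d_{11}$, and $\widetilde d_{11} = 0$ by the empty-sum convention, so $[\widetilde d_{11}] = 0$ trivially; this is the base case. Assuming the vanishing for all $\ell < k$, Proposition~\ref{Kraines} (at level $k$) becomes applicable, and the Lemma~\ref{dltzf} comparison above yields the vanishing at level $k$. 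This completes the induction, establishing that every defining system of size $< n$ has vanishing top class.

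Finally, with the hypothesis of Proposition~\ref{Kraines} now verified for the given $n$, I apply that proposition directly to the two defining systems $(c_{ij})$ and $(c'_{ij})$ of size $n$ with $[c_{ii}] = [c'_{ii}]$ for $i = 1 \nek n$, and conclude $[\widetilde c_{1n}] = [\widetilde c'_{1n}]$, as desired.

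The main obstacle is purely bookkeeping: getting the direction of the induction right so that when Proposition~\ref{Kraines} is invoked at level $k$ its hypotheses (vanishing at all strictly smaller levels) have already been secured, while simultaneously making sure Lemma~\ref{dltzf}'s hypothesis --- existence of size-$n$ defining systems on arbitrary cocycles --- transfers down to produce the auxiliary size-$k$ defining system with $[\widetilde c_{1k}] = 0$. Once the logical order is pinned down, each individual step is immediate from the cited results and the definitions; there is no delicate cochain manipulation left to do here, since all the sign-sensitive work is encapsulated in Proposition~\ref{Kraines}.
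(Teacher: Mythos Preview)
Your proposal is correct and takes essentially the same approach as the paper: both verify the hypothesis of Proposition~\ref{Kraines} by combining Lemma~\ref{dltzf} (to produce a comparison defining system with vanishing top class) with an inductively established uniqueness at smaller sizes, and then invoke Proposition~\ref{Kraines} at level $n$. The only difference is organizational---the paper inducts on $n$ (the full proposition, with base case $n=2$) while you run an upward induction on $k$ for the auxiliary vanishing claim---but the logical content is identical.
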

\begin{proof}
We argue by induction on $n$.
For $n=2$ this is immediate from the definition of $\widetilde c_{12},\widetilde c'_{12}$.

For arbitrary $n$, the assumption holds also for smaller values of $n$, by Lemma \ref{dltzf}.
We obtain the last sentence of the proposition for smaller values of $n$.
We need to verify the assumption of Proposition \ref{Kraines}.
So let $1\leq k<n$, and consider a defining system $(d_{ij})$ of size $k$ in $C^\bullet$.
Lemma \ref{dltzf} again yields a defining system $(d'_{ij})$ of size $k$ on $d_{11}\nek d_{kk}$ such that in addition $[\widetilde d'_{1k}]=0$.
By the inductive hypothesis,  $[\widetilde d_{1k}]=[\widetilde d'_{1k}]=0$, as required.
\end{proof}

Under the assumptions of Proposition \ref{uniqueness}, define the \textbf{Massey product}
\begin{equation}
\langle\cdot\nek\cdot\rangle\colon H^1\times\cdots\times H^1\to H^2,
\end{equation}
as follows:
given cocycles  $c_1\nek c_n\in C^1$ we take a defining system
$(c_{ij})$ in $C^\bullet$ with $c_{11}=c_1\nek c_{nn}=c_n$.
As remarked above, $\widetilde c_{1n}$ is a $2$-cocycle.
We set $\langle[c_1]\nek[c_n]\rangle=[\widetilde c_{1n}]$.
By Proposition \ref{uniqueness}, this map is well-defined.
Moreover, it is multi-linear (\cite{Fenn83}*{Lemma 6.2.4}, \cite{EfratMassey}).

\section{Formal power series}
\label{section on formal power series}
Let $(A,\leq)$ be a set, considered as an alphabet, and $A^*$ be the set of all finite words on $A$.
We write $\emptyset$ for the empty word, and $|w|$ for the length of the word $w$.

We fix a commutative unital ring $R$ and noncommuting variables $X_a$, $a\in A$.
The collection $X_A$ of all formal expressions  $X_w=X_{a_1}\cdots X_{a_n}$, with  $w=(a_1\nek a_n)\in A^*$,
forms a monoid under concatenation.
Let $R\langle\langle X_A\rangle\rangle$ be the ring of all formal power series $\sum_{w\in A^*}c_wX_w$,
with $c_w\in R$.
We denote by $R\langle\langle X_A\rangle\rangle^\times$  its multiplicative group.

For a positive integer $n$ let $V_{n,R}$ be the subset of $R\langle\langle X_A\rangle\rangle^\times$
consisting of all power series $\sum_wc_wX_w$ such that $c_\emptyset=1$ and $c_w=0$ for every $w$ with $1\leq |w|<n$.
Note that any $1+\alp\in V_{n,R}$ has an inverse $\sum_{k=0}^\infty(-1)^k\alp^k$ in $V_{n,R}$,
so $V_{n,R}$ is a subgroup of $R\langle\langle X_A\rangle\rangle^\times$.
We write $V_{n,R}^m$ for the subgroup of $V_{n,R}$ generated by all $m$-th powers, and $[V_{n,R},V_{k,R}]$ for the subgroup
of $V_{1,R}$ generated by all commutators $[\gam,\del]=\gam\inv\del\inv\gam\del$,
with $\gam\in V_{n,R}$, $\del\in V_{k,R}$.

\begin{lem}
\label{filtration of power series}
\begin{enumerate}
\item[(a)]
$[V_{n,R},V_{k,R}]\leq V_{n+k,R}$;
\item[(b)]
If $mR=0$, then $V_{n,R}^m\leq V_{n+1,R}$.
\end{enumerate}
\end{lem}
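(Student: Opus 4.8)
The plan is to compute directly in the associated graded structure of the Magnus algebra. Write $\mathfrak{m}_k \subseteq R\langle\langle X_A\rangle\rangle$ for the closed two-sided ideal of power series all of whose terms have length $\geq k$; then $1 + \mathfrak{m}_k = V_{k,R}$ as a set (together with those elements whose constant term is $1$, which is exactly the description given), and $\mathfrak{m}_k \cdot \mathfrak{m}_l \subseteq \mathfrak{m}_{k+l}$. The key point is to translate the group-theoretic statements about commutators and $p$-th powers into statements about these ideals.

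For part (a): take $\gamma = 1 + \alpha \in V_{n,R}$ and $\delta = 1 + \beta \in V_{k,R}$, so $\alpha \in \mathfrak{m}_n$ and $\beta \in \mathfrak{m}_k$. First I would record that $\gamma^{-1} = 1 + \alpha'$ with $\alpha' \in \mathfrak{m}_n$ (indeed $\alpha' = -\alpha + \alpha^2 - \cdots$), and similarly $\delta^{-1} = 1 + \beta'$ with $\beta' \in \mathfrak{m}_k$. Then expand
\[
[\gamma,\delta] = \gamma^{-1}\delta^{-1}\gamma\delta = (1+\alpha')(1+\beta')(1+\alpha)(1+\beta).
\]
Multiplying out, the constant term is $1$; every term of total degree $1$ in $\{\alpha,\alpha',\beta,\beta'\}$ contributes $\alpha' + \beta' + \alpha + \beta = (\alpha + \alpha') + (\beta + \beta')$, and here $\alpha + \alpha' \in \mathfrak{m}_{2n} \subseteq \mathfrak{m}_{n+k}$ while $\beta + \beta' \in \mathfrak{m}_{2k} \subseteq \mathfrak{m}_{n+k}$ (using $\alpha' \equiv -\alpha$, $\beta'\equiv -\beta$ mod higher terms); every term of total degree $\geq 2$ lies in $\mathfrak{m}_n\cdot\mathfrak{m}_k = \mathfrak{m}_{n+k}$ or deeper since it involves at least one factor from $\{\alpha,\alpha'\}$ and at least one from $\{\beta,\beta'\}$ — the honest way to see this is that the terms with only $\alpha,\alpha'$ factors and no $\beta,\beta'$ telescope to $(1+\alpha')(1+\alpha) = \gamma^{-1}\gamma = 1$, and likewise for the $\beta$'s, so the remaining terms are all mixed. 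Hence $[\gamma,\delta] \in 1 + \mathfrak{m}_{n+k} = V_{n+k,R}$, and since $V_{n+k,R}$ is a subgroup, the subgroup generated by all such commutators is contained in it.

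For part (b): assume $mR = 0$ and take $1 + \alpha \in V_{n,R}$, so $\alpha \in \mathfrak{m}_n$. The binomial theorem gives
\[
(1+\alpha)^m = \sum_{j=0}^m \binom{m}{j}\alpha^j = 1 + \sum_{j=1}^m \binom{m}{j}\alpha^j.
\]
For $j = 1$ the coefficient is $\binom{m}{1} = m$, which kills $\alpha$ since $mR = 0$; for $2 \leq j \leq m$ we have $\alpha^j \in \mathfrak{m}_{jn} \subseteq \mathfrak{m}_{2n} \subseteq \mathfrak{m}_{n+1}$. Thus $(1+\alpha)^m \in 1 + \mathfrak{m}_{n+1} = V_{n+1,R}$, and again since $V_{n+1,R}$ is a subgroup it contains the subgroup $V_{n,R}^m$ generated by all $m$-th powers.

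I do not expect a serious obstacle here; the only point requiring a little care is making rigorous the claim in (a) that "only mixed terms survive" — handled cleanly by the telescoping observation $(1+\alpha')(1+\alpha)=1$ rather than by brute-force bookkeeping of monomials — and in (b) making sure the convergence of the infinite sums defining inverses and the finiteness of the binomial expansion are interpreted in the $\mathfrak{m}$-adic topology, which is routine.
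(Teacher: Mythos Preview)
Your approach matches the paper's: for (a) both expand the commutator and argue that only mixed terms survive, hence land in degree $\geq n+k$; for (b) both invoke the binomial expansion of $(1+\alpha)^m$, with the linear term killed by $mR=0$. Your telescoping observation $(1+\alpha')(1+\alpha)=1$ and $(1+\beta')(1+\beta)=1$ is exactly the content of the paper's one-line computation.

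One correction to the write-up: your preliminary degree-by-degree sketch contains false statements. The inclusions $\mathfrak{m}_{2n}\subseteq\mathfrak{m}_{n+k}$ and $\mathfrak{m}_{2k}\subseteq\mathfrak{m}_{n+k}$ require $n\geq k$ and $k\geq n$ respectively, so they hold simultaneously only when $n=k$; and the degree-$2$ term $\alpha'\alpha$ is \emph{not} mixed, contrary to your claim about terms of total degree $\geq 2$. These slips are harmless because your ``honest'' telescoping argument does not depend on them and correctly disposes of all non-mixed terms (including $\alpha+\alpha'$, $\beta+\beta'$, and $\alpha'\alpha$, $\beta'\beta$) in one stroke. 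Just delete the degree-by-degree paragraph and lead directly with the telescoping.
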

\begin{proof}
For (a) let $1+\alp\in V_{n,R}$ and $1+\beta\in V_{k,R}$.
Then
\[
\begin{split}
[1+\alp,1+\beta]&=(1-\alp+\alp^2-+\cdots)(1-\beta+\beta^2-+\cdots)(1+\alp)(1+\beta) \\
&\in 1+R\langle\langle X_A\rangle\rangle\alp\beta R\langle\langle X_A\rangle\rangle
   +R\langle\langle X_A\rangle\rangle\beta\alp R\langle\langle X_A\rangle\rangle\leq V_{n+k,R}.
\end{split}
\]

For (b), use the (non-commutative) binomial formula.
\end{proof}

\section{Free profinite groups}
\label{section on free profinite groups}
We recall from \cite{FriedJarden08}*{\S17.4} the following terminology and facts on free profinite groups.

Let $G$ be a profinite group and $A$ a set.
A map $\varphi\colon A\to G$  \textbf{converges to $1$} if for every open normal subgroup $M$ of $G$,
the set $A\setminus\varphi\inv(M)$ is finite.

Let $S$ be a profinite group.
We say that $S$ is a \textbf{free profinite group on basis $A$} with respect to a map $\iota\colon A\to S$
if
\begin{enumerate}
\item[(i)]
$\iota\colon A\to S$ converges to $1$ and $\iota(A)$ generates $S$;
\item[(ii)]
For every profinite group $G$
and a continuous map $\varphi\colon A\to G$ converging to $1$, there
is a unique continuous homomorphism $\hat\varphi\colon S\to G$ with
$\varphi=\hat\varphi\circ\iota$ on $A$.
\end{enumerate}
A free profinite group on $A$ exists, and is unique up to a continuous isomorphism.
We denote it  by  $S_A$.
Necessarily, $\iota$ is injective, and we identify $A$ with its image in $S_A$.
The group $S_A$ is projective, whence has cohomological dimension $\leq1$
 \cite{NeukirchSchmidtWingberg}*{Cor.\ 3.5.16}.

Now let $R$ be a profinite unital ring.
The map $\sum_wc_wX_w\mapsto(c_w)_w$ identifies $R\langle\langle X_A\rangle\rangle$ with $R^{A^*}$.
This induces on the additive group of $R\langle\langle X_A\rangle\rangle$ a profinite topology.
Moreover, the multiplication map in  $R\langle\langle X_A\rangle\rangle$ is continuous,
making it a profinite topological ring.

Now take $A$ finite.
The ring  $R\langle\langle X_A\rangle\rangle$ is profinite ring, so
$R\langle\langle X_A\rangle\rangle^\times$ is a profinite group.
Hence the map $a\mapsto 1+X_a$, $a\in A$, extends to the (continuous)   \textbf{Magnus homomorphism}
$\Lam_{S_A,R}\colon S_A\to R\langle\langle X_A\rangle\rangle^\times$.

This generalizes to arbitrary $A$ as follows:
Let $B$ be a finite subset of $A$.
The map $\varphi\colon A\to S_B$,  given by $a\mapsto a$ for $a\in B$,
and $a\mapsto 1$ for $a\in A\setminus B$, converges to $1$.
It extends to a unique continuous epimorphism $S_A\to S_B$.
Also, there is a continuous $R$-algebra epimorphism
$R\langle\langle X_A\rangle\rangle\to R\langle\langle X_B\rangle\rangle$,
given by $X_b\mapsto X_b$ for $b\in B$ and $X_a\mapsto0$ for $a\in A\setminus B$.
Then
\[
S_A=\invlim S_B,  \
R\langle\langle X_A\rangle\rangle=\invlim R\langle\langle X_B\rangle\rangle, \
R\langle\langle X_A\rangle\rangle^\times=\invlim R\langle\langle X_B\rangle\rangle^\times,
\]
where $B$ ranges over all finite subsets of $A$.
We now define $\Lam_{S_A,R}\colon S_A\to R\langle\langle X_A\rangle\rangle^\times$
to be the inverse limit of the maps $\Lam_{S_B,R}$.
Thus  $\Lam_{S_A,R}(a)=1+X_a$ for $a\in A$.

From now on we abbreviate $S=S_A$.
For $\sig\in S$ we set
\[
\Lam_{S,R}(\sig)=\sum_{w\in A^*}\eps_{w,R}(\sig)X_w
\]
with $\eps_{w,R}(\sig)\in R$.
Thus $\eps_{\emptyset,R}(\sig)=1$.

For a positive integer $n$ let $S_{n,R}=\Lam_{S,R}\inv(V_{n,R})$.
It is closed subgroup of $S$,  and $S_{1,R}=S$.
Also, $S_{n,R}=\invlim (S_B)_{n,R}$, with $B$ ranging over all finite subsets of $A$.
The next lemma records a few well-known facts.

\begin{lem}
\label{ttt}
Let  $w\in A^*$.
\begin{enumerate}
\item[(a)]
For $\sig,\tau\in S$ one has
$\eps_{w,R}(\sig\tau)=\sum\eps_{w_1,R}(\sig)\eps_{w_2,R}(\tau)$,
where the sum is over all $w_1,w_2\in A^*$ with $w=w_1w_2$.
\item[(b)]
For $n=|w|$, the restriction $\eps_{w,R}\colon S_{n,R}\to R$ is a homomorphism.
\item[(c)]
For $a\in A$,  the map $\eps_{(a),R}\colon S\to R$ is a homomorphism.
\item[(d)]
$\eps_{(a),R}(a)=1$ for $a\in A$ and $\eps_{(a),R}(a')=0$ for $a,a'\in A$ distinct.
\end{enumerate}
\end{lem}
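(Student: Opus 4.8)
The plan is to derive everything from the defining identity $\Lam_{S,R}(\sig\tau)=\Lam_{S,R}(\sig)\Lam_{S,R}(\tau)$, which holds because $\Lam_{S,R}$ is a homomorphism into $R\langle\langle X_A\rangle\rangle^\times$. Writing out the product of two power series $\bigl(\sum_u\eps_{u,R}(\sig)X_u\bigr)\bigl(\sum_v\eps_{v,R}(\tau)X_v\bigr)$ and collecting the coefficient of $X_w$ on each side gives part (a) at once, since $X_uX_v=X_w$ precisely when $w=uv$ (concatenation in the free monoid $A^*$). This is the only genuine computation and it is entirely routine; I would present it in one or two lines.

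For part (b), fix $w$ with $|w|=n$ and take $\sig,\tau\in S_{n,R}$. By definition of $S_{n,R}=\Lam_{S,R}\inv(V_{n,R})$, we have $\eps_{u,R}(\sig)=\eps_{u,R}(\tau)=0$ for all $u$ with $1\le|u|<n$, while $\eps_{\emptyset,R}(\sig)=\eps_{\emptyset,R}(\tau)=1$. In the sum from (a) over factorizations $w=w_1w_2$, any term with both $w_1,w_2$ nonempty has $|w_1|,|w_2|<n$, hence at least one of $\eps_{w_1,R}(\sig)$, $\eps_{w_2,R}(\tau)$ vanishes; the only surviving terms are $w_1=\emptyset,\,w_2=w$ and $w_1=w,\,w_2=\emptyset$, giving $\eps_{w,R}(\sig\tau)=\eps_{w,R}(\sig)+\eps_{w,R}(\tau)$. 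That is additivity; continuity is clear from the profinite topology, so $\eps_{w,R}$ restricted to $S_{n,R}$ is a homomorphism.

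Part (c) is the special case $n=1$ of (the argument behind) (b): for a single letter $a$, the word $w=(a)$ has length $1$, so in the factorization sum of (a) the only contributions are again the two trivial ones, now for arbitrary $\sig,\tau\in S$ since $\eps_{\emptyset,R}\equiv1$ and there are no nonempty proper sub-words of $(a)$. Hence $\eps_{(a),R}(\sig\tau)=\eps_{(a),R}(\sig)+\eps_{(a),R}(\tau)$ on all of $S$. Finally part (d) follows by evaluating on basis elements: $\Lam_{S,R}(a')=1+X_{a'}$, so $\eps_{(a),R}(a')$ is the coefficient of $X_a$ in $1+X_{a'}$, which is $1$ if $a=a'$ and $0$ otherwise. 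Since the basis elements together with $1$ topologically generate $S$ and $\eps_{(a),R}$ is a continuous homomorphism by (c), this determines $\eps_{(a),R}$ completely, but for the stated lemma only the values on $A$ are needed. There is no real obstacle here; the one point to be careful about is bookkeeping the factorizations $w=w_1w_2$ and the convention $\eps_{\emptyset,R}\equiv 1$, which makes the ``collapse to two terms'' step in (b) and (c) work cleanly.
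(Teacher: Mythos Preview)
Your proposal is correct and follows essentially the same approach as the paper: (a) is just a restatement of multiplicativity of $\Lam_{S,R}$, (b) follows from (a) by the vanishing argument you give, (c) is the special case $n=1$ of (b) (since $S_{1,R}=S$), and (d) is immediate from $\Lam_{S,R}(a')=1+X_{a'}$. The paper's proof is a terse four lines saying exactly this; your version simply spells out the bookkeeping.
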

\begin{proof}
(a) is a restatement of  $\Lam_{S,R}(\sig\tau)=\Lam_{S,R}(\sig)\Lam_{S,R}(\tau)$.
(b) follows from (a), and (c) is a special case of (b).
(d) is immediate from the definition.
\end{proof}

\section{The filtration $S_{n,\dbZ/m}$}
\label{section of the filtration S_{n,Z/m}}
Let $S=S_A$ be as before, and assume that  $R=\dbZ/m$ for an integer $m\geq2$.
Lemma \ref{filtration of power series} implies that
\begin{equation}
\label{inclusion of S_n}
S_{n,\dbZ/m}^m[S,S_{n,\dbZ/m}]\leq S_{n+1,\dbZ/m}.
\end{equation}

For $n=1$, this is an equality:

\begin{lem}
\label{description of S2}
$S^m[S,S]=S_{2,\dbZ/m}$.
\end{lem}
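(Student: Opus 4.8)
The plan is to prove the reverse inclusion $S_{2,\dbZ/m}\leq S^m[S,S]$, since (\ref{inclusion of S_n}) with $n=1$ already gives $S^m[S,S]\leq S_{2,\dbZ/m}$. Both sides are closed normal subgroups of $S$, so it suffices to compare the finite quotients $S/S^m[S,S]$ and $S/S_{2,\dbZ/m}$ and show the natural surjection between them is an isomorphism, or equivalently to produce enough characters/homomorphisms separating $S_{2,\dbZ/m}$ from the smaller group. First I would reduce to the case where $A$ is finite, using that $S_A=\invlim S_B$ and $S_{n,\dbZ/m}=\invlim (S_B)_{n,\dbZ/m}$ over finite subsets $B\subseteq A$, together with the fact that the operations $H\mapsto H^m$ and $H\mapsto[S,H]$ are compatible with inverse limits along the relevant surjections; this is routine but should be spelled out.

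With $A$ finite, the key step is to identify $S/S^m[S,S]$ explicitly. By the universal property of the free profinite group, $S^m[S,S]$ is the smallest closed normal subgroup with abelian quotient of exponent dividing $m$, so $S/S^m[S,S]\isom(\dbZ/m)^A$, with $\sig\mapsto(\eps_{(a),\dbZ/m}(\sig))_{a\in A}$ giving an explicit isomorphism by Lemma \ref{ttt}(c),(d) — each $\eps_{(a),\dbZ/m}$ is a homomorphism $S\to\dbZ/m$ sending $a\mapsto1$ and $a'\mapsto0$ for $a'\neq a$, hence the tuple of these maps is surjective onto $(\dbZ/m)^A$ and kills $S^m[S,S]$. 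It remains to check that this tuple also kills $S_{2,\dbZ/m}$ and nothing more: if $\sig\in S_{2,\dbZ/m}$ then by definition $\Lam_{S,\dbZ/m}(\sig)\in V_{2,\dbZ/m}$, meaning the degree-one coefficients $\eps_{(a),\dbZ/m}(\sig)$ all vanish, so $\sig\in\Ker$ of the tuple. Conversely, if all $\eps_{(a),\dbZ/m}(\sig)=0$, then combined with $\eps_{\emptyset}(\sig)=1$ this says $\Lam_{S,\dbZ/m}(\sig)\in V_{2,\dbZ/m}$, i.e. $\sig\in S_{2,\dbZ/m}$. Hence $\Ker$(tuple)$=S_{2,\dbZ/m}$, while also $\Ker$(tuple)$=S^m[S,S]$ since the tuple induces the isomorphism $S/S^m[S,S]\isom(\dbZ/m)^A$.

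Putting these together: $S^m[S,S]$ and $S_{2,\dbZ/m}$ are both equal to the common kernel of the family $(\eps_{(a),\dbZ/m})_{a\in A}\colon S\to(\dbZ/m)^A$, giving the desired equality. The only genuine subtlety — the "main obstacle" — is justifying that $S/S^m[S,S]$ really is the free abelian profinite group of exponent $m$ on $A$, i.e. that the map $\varphi\colon A\to(\dbZ/m)^A$ sending $a$ to the $a$-th standard basis vector converges to $1$ (so that it extends to $S$) and that the extension is surjective with kernel exactly $S^m[S,S]$; convergence is clear since for the open subgroups of the finite-or-profinite group $(\dbZ/m)^A$ only finitely many $a$ escape, and surjectivity plus the kernel computation follow from the universal property together with the fact that $(\dbZ/m)^A$ is abelian of exponent $m$ and generated by $\varphi(A)$. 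Everything else is bookkeeping with the coefficient functions $\eps_{w,\dbZ/m}$ from Lemma \ref{ttt} and the inverse-limit reduction.
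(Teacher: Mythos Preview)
Your proof is correct and follows essentially the same approach as the paper: both reduce to finite $A$, use the easy inclusion from (\ref{inclusion of S_n}), and then show the induced map $S/S^m[S,S]\to V_{1,\dbZ/m}/V_{2,\dbZ/m}\isom(\dbZ/m)^A$ is injective, so that $S_{2,\dbZ/m}/S^m[S,S]$ (which lies in its kernel) is trivial. The only cosmetic difference is that the paper establishes injectivity by writing down an explicit section $\lam$ using the freeness of $V_{1,\dbZ/m}/V_{2,\dbZ/m}$ as a $\dbZ/m$-module, whereas you argue dually that $S/S^m[S,S]$ itself satisfies the universal property of the free $\dbZ/m$-module on $A$.
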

\begin{proof}
By (\ref{inclusion of S_n}), $S^m[S,S]\leq S_{2,\dbZ/m}$.

For the converse, we may use an inverse limit argument to assume that $A$ is finite.
Let  $\bar S=S/S^m[S,S]$ and let $\bar\Lam_{S,\dbZ/m}\colon \bar S\to V_{1,\dbZ/m}/V_{2,\dbZ/m}$
be the homomorphism induced by $\Lam_{S,\dbZ/m}$.
Use Lemma \ref{filtration of power series} to obtain that $ V_{1,\dbZ/m}/V_{2,\dbZ/m}$
is a free $\dbZ/m$-module on generators $(1+X_a)V_{2,\dbZ/m}$, $a\in A$.
Since $\bar S$ is abelian of exponent $m$, we may define a homomorphism $\lam\colon V_{1,\dbZ/m}/V_{2,\dbZ/m}\to\bar S$ by mapping this generator to the image $\bar a$ of $a$ in $\bar S$.
Then $(\lam\circ\bar\Lam_{S,\dbZ/m})(\bar a)=\bar a$ for $a\in A$, implying that $\lam\circ\bar\Lam_{S,\dbZ/m}=\id_{\bar S}$ and $\bar\Lam_{S,\dbZ/m}$ is injective.
But $S_{2,\dbZ/m}/S^m[S,S]$ is mapped trivially by $\bar\Lam_{S,\dbZ/m}$, and therefore is trivial.
\end{proof}

Now let $m=p$ prime.
For a profinite group $G$ let  $I_{\dbF_p}(G)$ be the augmentation ideal in the complete group ring
$\dbF_p[[G]]$, i.e., the closed ideal generated by all elements $g-1$, with $g\in G$.
When $G$ is finite (whence discrete), a theorem of Jennings and Brauer (\cite{Jennings41}*{Th.\ 5.5},
\cite{DixonDuSautoyMannSegal99}*{Th.\ 12.9}) identifies $G\cap(1+I_{\dbF_p}(G)^n)$ with the $n$th term $G_n$
in the Zassenhaus filtration of $G$, $n\geq1$.
An inverse limit argument extends this to arbitrary profinite groups $G$; see \cite{Lazard54}*{Cor.\ 6.10} and \cite{Quillen68}*{Th.\ 2.4}.

The map $1+\sum_{|w|\geq n}c_wX_w \mapsto (c_w)_{|w|=n}$ induces
a group isomorphism  $V_{n,\dbF_p}/V_{n+1,\dbF_p}\xrightarrow{\sim}\prod_{w\in A^*, |w|=n}\dbF_p$.
When $A$ is finite, $V_{1,\dbF_p}/V_{n,\dbF_p}$ is therefore a finite $p$-group for every $n$, so
$V_{1,\dbF_p}=\invlim V_{1,\dbF_p}/V_{n,\dbF_p}$ is a pro-$p$ group.
We have the following power series variant of the above results of Jennings, Lazard and Quillen:

\begin{prop}
\label{Jennings-Brauer}
$S_n=S_{n,\dbF_p}$.
\end{prop}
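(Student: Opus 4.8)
The plan is to reduce the identity $S_n = S_{n,\dbF_p}$ to the Jennings--Brauer theorem, which identifies $S_n$ with $S \cap (1 + I_{\dbF_p}(S)^n)$, by showing that the Magnus homomorphism $\Lam_{S,\dbF_p}$ carries this group-ring description onto the power-series description defining $S_{n,\dbF_p} = \Lam_{S,\dbF_p}\inv(V_{n,\dbF_p})$. First I would reduce to the case $A$ finite by the usual inverse-limit argument: both $S_n$ and $S_{n,\dbF_p}$ are inverse limits over finite subsets $B \subseteq A$ of the corresponding subgroups of $S_B$ (the latter is stated explicitly in the excerpt just before Lemma \ref{ttt}; the former follows since the Zassenhaus filtration commutes with the relevant projections). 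So assume $A$ finite.

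The heart of the argument is a comparison of filtrations. On one side, $S \cap (1 + I_{\dbF_p}(S)^n)$ is the $n$-th Zassenhaus term $S_n$ by Jennings--Brauer. On the other side, I would produce a continuous $\dbF_p$-algebra homomorphism $\bar\Lam \colon \dbF_p[[S]] \to \dbF_p\langle\langle X_A\rangle\rangle$ extending $\Lam_{S,\dbF_p}$, i.e. sending $\sigma \in S$ to $\Lam_{S,\dbF_p}(\sigma)$; this exists by the universal property of the complete group ring since $\dbF_p\langle\langle X_A\rangle\rangle^\times$ receives $S$ via the Magnus map. Under $\bar\Lam$ the augmentation ideal $I_{\dbF_p}(S)$, generated by the $\sigma - 1$, lands in the ideal generated by the $\Lam_{S,\dbF_p}(\sigma) - 1$; since $\Lam_{S,\dbF_p}(a) - 1 = X_a$ for $a \in A$ and $A$ generates $S$, this is exactly the ideal $\mathfrak{m}$ of power series with zero constant term. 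Hence $\bar\Lam(I_{\dbF_p}(S)^n) \subseteq \mathfrak{m}^n$, which is the set of power series $\sum_{|w|\geq n} c_w X_w$. This shows $S_n = S \cap (1 + I_{\dbF_p}(S)^n) \subseteq \Lam_{S,\dbF_p}\inv(1 + \mathfrak{m}^n) = S_{n,\dbF_p}$.

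For the reverse inclusion $S_{n,\dbF_p} \subseteq S_n$, the cleanest route is a counting/dimension argument: one knows that $S_n/S_{n+1}$ has, for $A$ finite of size $d$, the free-Lie-algebra Witt dimension (Jennings--Brauer again, via the associated graded of $\dbF_p[[S]]$), and that $\bar\Lam$ induces an injection of the associated graded $\bigoplus I_{\dbF_p}(S)^n/I_{\dbF_p}(S)^{n+1}$ into $\bigoplus \mathfrak{m}^n/\mathfrak{m}^{n+1}$; combined with the isomorphism $V_{n,\dbF_p}/V_{n+1,\dbF_p} \xrightarrow{\sim} \prod_{|w|=n}\dbF_p$ recorded before the Proposition, one gets that the inclusion $S_n \hookrightarrow S_{n,\dbF_p}$ induces a surjection on each graded quotient, hence is an equality of pro-$p$ groups by a standard filtered-group argument. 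Alternatively, and perhaps more in the spirit of the paper, one shows directly by induction on $n$ that $S_{n,\dbF_p}$ satisfies the defining relations $S_{n,\dbF_p}^p \leq S_{np,\dbF_p}$ and $[S_{i,\dbF_p}, S_{j,\dbF_p}] \leq S_{i+j,\dbF_p}$ (the inclusions of Lemma \ref{filtration of power series} give $[S_{i,\dbF_p},S_{j,\dbF_p}] \leq S_{i+j,\dbF_p}$ and $S_{n,\dbF_p}^p \leq S_{n+1,\dbF_p}$; the sharper $p$-power estimate $S_{n,\dbF_p}^p \leq S_{np,\dbF_p}$ needs the non-commutative binomial formula more carefully, using that in characteristic $p$ all the mixed terms in $(1+\alpha)^p$ except $1 + \alpha^p$ lie in degree $\geq np$ — this is where the Frobenius-type phenomenon enters), so that by minimality of the Zassenhaus filtration $S_n \leq S_{n,\dbF_p}$; the opposite inclusion is the one handled above, giving equality.

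The main obstacle I expect is the sharp $p$-power inequality $S_{n,\dbF_p}^p \leq S_{np,\dbF_p}$, which is not literally Lemma \ref{filtration of power series}(b) — that lemma only gives $\leq S_{n+1,\dbF_p}$. One needs the fact that for $1+\alpha \in V_{n,\dbF_p}$, the binomial expansion $(1+\alpha)^p = \sum_{k=0}^p \binom{p}{k}\alpha^k$ has all middle coefficients divisible by $p$, hence vanishing mod $p$, leaving $(1+\alpha)^p \equiv 1 + \alpha^p$, and $\alpha^p$ has lowest-degree term of degree $\geq np$; because the variables do not commute one must phrase this using Jacobson's formula / the fact that in $\dbF_p\langle\langle X_A\rangle\rangle$ the Frobenius is additive modulo higher commutators, but since we only need a degree estimate and not an exact formula, the estimate $(1+\alpha)^p \in 1 + \mathfrak{m}^{np}$ follows from $p \cdot \mathfrak{m}^j \subseteq \mathfrak{m}^{j}$ being zero mod $p$ together with $\mathfrak{m}^{kn} \subseteq \mathfrak{m}^{np}$ for $k \geq 1$ failing only at $k=1$ where $\alpha^1$ contributes — so in fact one should argue that mod $p$, $(1+\alpha)^p - 1 - \alpha^p$ is a sum of terms each of which is a product involving at least $p$ factors from $\mathfrak{m}^n$, landing in $\mathfrak{m}^{pn}$. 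Getting this bookkeeping exactly right is the one genuinely delicate point; everything else is the inverse-limit reduction and citation of Jennings--Brauer.
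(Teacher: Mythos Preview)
Your argument establishes only one inclusion, twice. The group-ring homomorphism $\bar\Lambda\colon \dbF_p[[S]]\to\dbF_p\langle\langle X_A\rangle\rangle$ gives $S_n\subseteq S_{n,\dbF_p}$, and so does your ``alternative'' minimality argument: if $(S_{n,\dbF_p})_n$ satisfies the Zassenhaus defining relations, then since $(S_n)_n$ is the \emph{fastest} descending such filtration one gets $S_n\leq S_{n,\dbF_p}$ --- the same direction again. So when you write ``the opposite inclusion is the one handled above,'' you have in fact proved $S_n\leq S_{n,\dbF_p}$ twice and $S_{n,\dbF_p}\leq S_n$ not at all. (Your worry about $(1+\alpha)^p$ is also misplaced: since $1$ is central the ordinary binomial expansion applies and $(1+\alpha)^p=1+\alpha^p$ over $\dbF_p$, giving $V_{n,\dbF_p}^p\leq V_{np,\dbF_p}$ immediately --- but this still only feeds the minimality argument in the direction you already have.) Your dimension-counting sketch does not close the gap either: asserting that $S_n\hookrightarrow S_{n,\dbF_p}$ surjects on graded pieces is exactly what needs proof, and injectivity of the associated graded of $\bar\Lambda$ gives you no control over $S_{n,\dbF_p}/S_{n+1,\dbF_p}$.

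The paper supplies the missing inclusion by a maneuver you do not attempt: it passes to the maximal pro-$p$ quotient $\bar S$ of $S$, notes that $S_n$ is the preimage of $\bar S_n$ (since $S/S_n$ has $p$-power exponent), and then invokes the theorem of Koch that the extended Magnus map $\dbF_p[[\bar S]]\to\dbF_p\langle\langle X_A\rangle\rangle$ is an \emph{isomorphism} of topological $\dbF_p$-algebras. This isomorphism carries $I_{\dbF_p}(\bar S)$ \emph{onto} the ideal $J$ of series with zero constant term, hence $1+I_{\dbF_p}(\bar S)^n$ bijectively onto $1+J^n=V_{n,\dbF_p}$; Jennings--Brauer for $\bar S$ then gives both inclusions at once. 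Working directly with $\dbF_p[[S]]$ rather than $\dbF_p[[\bar S]]$ loses precisely this isomorphism, which is why your one-sided argument cannot be completed without some equivalent input.
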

\begin{proof}
By an inverse limit argument, we may assume that the basis $A$ is finite.
Let $\bar S$ be the maximal pro-$p$ quotient of $S$.
It is a free pro-$p$ group.
By the definition of the Zassenhaus filtration, $S/S_n$ has a $p$-power exponent.
Hence the preimage of $\bar S_n$ under the epimorphism $S\to\bar S$ is $S_n$.

Also, $\Lam_{S,\dbF_p}$ breaks via a homomorphism $\bar\Lam_{S,\dbF_p}\colon\bar S\to V_{1,\dbF_p}$.
It extends to a continuous $\dbF_p$-algebra homomorphism
 $\bar\Lam_{\bar S,\dbF_p}\colon \dbF_p[[\bar S]]\to \dbF_p\langle\langle X_A\rangle\rangle$,
 which by \cite{Koch02}*{Th.\ 7.16},  is an isomorphism.
Let $J=V_{1,\dbF_p}-1$ be the ideal of all power series in $\dbF_p\langle\langle X_A\rangle\rangle$
with constant term $0$.
Using the identity $\sig\tau-1=(\sig-1)(\tau-1)+(\sig-1)+(\tau-1)$ for $\sig,\tau\in \bar S$ we see that
$\bar\Lam_{\bar S,\dbF_p}$  maps $I_{\dbF_p}(\bar S)$ onto $J$.
Therefore it maps $1+I_{\dbF_p}(\bar S)^n$ bijectively onto $1+J^n=V_{n,\dbF_p}$.
By the Jennings--Brauer theorem, $\bar S_n$ is therefore the preimage of $V_{n,\dbF_p}$ in $\bar S$
under $\bar\Lam_{\bar S,\dbF_p}$.
We conclude that $S_n$ is the preimage of $V_{n,\dbF_p}$ in $S$
under $\Lam_{S,\dbF_p}$, i.e.,  $S_n=S_{n,\dbF_p}$.
\end{proof}

\begin{lem}
\label{aux hom}
Let $c_1\nek c_n\colon S=S_A\to \dbF_p$ be continuous homomorphisms and
$B=\{b_1\nek b_n\}$  be a set of size $n$.
There is a continuous homomorphism $\phi\colon S_A\to S_B$ such that
$\eps^B_{(b_i),\dbF_p}\circ\phi=c_i$, $i=1,2\nek n$, where $\eps^B$ denotes the Magnus
coefficient with respect to $S_B$.
\end{lem}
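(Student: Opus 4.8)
The plan is to apply the universal property (ii) of the free profinite group $S=S_A$, after first constructing a suitable map on the basis. For each $a\in A$ and each $i$, fix the integer representative of $c_i(a)\in\dbF_p$ lying in $\{0,1\nek p-1\}$, and define $\varphi\colon A\to S_B$ by
\[
\varphi(a)=b_1^{c_1(a)}b_2^{c_2(a)}\cdots b_n^{c_n(a)}.
\]
I claim $\varphi$ converges to $1$. Indeed, $M=\bigcap_{i=1}^n\Ker(c_i)$ is a finite intersection of open normal subgroups of $S$, hence itself open and normal; since $A$ converges to $1$ in $S$ (part (i) of the definition of a free profinite group), the set $A\setminus M$ is finite. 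For $a\in A\cap M$ all $c_i(a)$ vanish, so $\varphi(a)=1$; thus $\varphi$ is trivial outside the finite set $A\setminus M$, and in particular converges to $1$ (continuity being automatic, as $A$ is discrete). By property (ii), $\varphi$ extends to a unique continuous homomorphism $\phi\colon S_A\to S_B$ with $\phi|_A=\varphi$.

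It remains to verify that $\eps^B_{(b_i),\dbF_p}\circ\phi=c_i$ for $i=1\nek n$. By Lemma \ref{ttt}(c) applied to $S_B$, the map $\eps^B_{(b_i),\dbF_p}\colon S_B\to\dbF_p$ is a continuous homomorphism, so $\eps^B_{(b_i),\dbF_p}\circ\phi$ is a continuous homomorphism $S_A\to\dbF_p$. Since $A$ generates $S_A$ topologically, it suffices to check that $\eps^B_{(b_i),\dbF_p}\circ\phi$ and $c_i$ agree on each $a\in A$. Using the homomorphism property of $\eps^B_{(b_i),\dbF_p}$ (first on the product, then on the powers) together with Lemma \ref{ttt}(d),
\[
\eps^B_{(b_i),\dbF_p}(\phi(a))=\eps^B_{(b_i),\dbF_p}\Bigl(\prod_{j=1}^n b_j^{c_j(a)}\Bigr)=\sum_{j=1}^n c_j(a)\,\eps^B_{(b_i),\dbF_p}(b_j)=c_i(a),
\]
as desired; hence the two homomorphisms coincide.

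The construction is essentially routine, and no serious obstacle arises. The only point requiring a moment's care is the convergence to $1$ of the basis map $\varphi$, which as above reduces to the openness of $\bigcap_{i=1}^n\Ker(c_i)$ and the fact that $A$ converges to $1$ in $S$; the verification of the identity $\eps^B_{(b_i),\dbF_p}\circ\phi=c_i$ is then an immediate computation on generators.
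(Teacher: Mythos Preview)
Your proof is correct and follows essentially the same approach as the paper: define the basis map $a\mapsto\prod_i b_i^{c_i(a)}$, extend via the universal property of $S_A$, and verify the identity on generators using Lemma~\ref{ttt}(c)(d). The only difference is that the paper first reduces to finite $A$ by an inverse limit argument, whereas you handle arbitrary $A$ directly by checking that $\varphi$ converges to $1$; your route is slightly more direct but otherwise identical in substance.
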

\begin{proof}
An inverse limit argument reduces this to the case where $A$ is finite.
For every $a\in A$ and $1\leq i\leq n$ choose $\hat c_i(a)\in \dbZ$ with $c_i(a)=\hat c_i(a)\pmod{p\dbZ}$.
The map $A\to S_B$, $a\mapsto \prod_{i=1}^nb_i^{\hat c_i(a)}$,
extends to a continuous homomorphism $\phi\colon S_A\to S_B$.
Then $\eps^B_{(b_i),\dbF_p}(\phi(a))=c_i(a)$ for every $a\in A$ and $1\leq i\leq n$, and the assertion follows.
\end{proof}

\section{Unipotent Matrices}
Consider integers $n\geq2$ and $d\geq0$, and let $R$ be a (discrete) finite ring.
Let $T_{n,d}(R)$ be the set of all  $n\times n$ matrices $(a_{ij})$ over $R$ with the $(1,n)$
entry omitted and such that $a_{ij}=0$ for $j-i\leq d-1$ (in particular, $(a_{ij})$ is upper-triangular).
It is an $R$-algebra with respect to the standard operations.
Note that  $T_{n,d}T_{n,d'}\subseteq T_{n,d+d'}$.
Furthermore,  for every entry $(i,j)\neq(1,n)$ we have $j-i\leq n-2$.
Hence $T_{n,d}=\{0\}$ for $n-1\leq d$.
We denote the $n\times n$ identity matrix with the $(1,n)$ entry omitted by $I_n$.

Let $\U_n(R)$ be the group of all upper-triangular unipotent $n\times n$ matrices over $R$.
Let $\bar \U_n(R)=I_n+T_{n,1}(R)$ be the group of all unipotent (punctured) matrices in $T_{n,0}$.
Let $\pi\colon \U_n(R)\to\bar \U_n(R)$ be the obvious forgetful epimorphism.
Its kernel consists of all matrices in $\U_n(R)$ which are zero
except for the main diagonal and at the entry $(1,n)$, and is therefore isomorphic to the additive group of $R$.
We obtain a central extension of groups
\[
0\to R\to \U_n(R)\xrightarrow{\pi} \bar \U_n(R)\to1.
\]
We endow $\U_n(R)$, $\bar \U_n(R)$ with the discrete topologies.

For the rest of this section we set $S=S_A$ and $R=\dbZ/m$, with $m\geq2$.

\begin{prop}
\label{factoring lemma}
Every continuous homomorphism $\gam\colon S\to\bar \U_n(R)$ is trivial on $S_{n-1,R}$.
\end{prop}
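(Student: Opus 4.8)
The plan is to exploit the filtration structure on $T_{n,d}(R)$ together with the description of $S_{n-1,R}$ via the Magnus homomorphism. The key observation is that a homomorphism $\gamma\colon S\to\bar\U_n(R)$ can be written as $\gamma = I_n + \eta$ where $\eta(\sigma)\in T_{n,1}(R)$, and that $\eta$ behaves like a kind of "logarithm" whose leading term in each matrix position is controlled by words of a corresponding length. Concretely, I would first expand, for $\sigma\in S$, the matrix $\gamma(\sigma)$ by writing each entry $\gamma(\sigma)_{ij}$ (for $i<j$, $(i,j)\neq(1,n)$) in terms of the Magnus coefficients $\varepsilon_{w,R}(\sigma)$. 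Using that $\gamma$ is a homomorphism and that $\bar\U_n(R)$ is generated by elementary-type unipotent matrices, one shows by induction on $j-i$ that $\gamma(\sigma)_{ij}$ depends only on the Magnus coefficients $\varepsilon_{w,R}(\sigma)$ with $|w|\leq j-i$, and in fact is a fixed polynomial expression in them.

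**Next I would** make the induction precise. For $j-i=1$, each $\gamma(\sigma)_{i,i+1}$ is a continuous homomorphism $S\to R$, hence by Lemma~\ref{ttt}(c) a linear combination of the maps $\varepsilon_{(a),R}$, i.e.\ it factors through $\varepsilon_{w,R}$ with $|w|=1$. For the inductive step, one uses the multiplicativity $\gamma(\sigma\tau)=\gamma(\sigma)\gamma(\tau)$ to get a cocycle-type relation
\[
\gamma(\sigma\tau)_{ij} = \gamma(\sigma)_{ij} + \gamma(\tau)_{ij} + \sum_{i<r<j}\gamma(\sigma)_{ir}\gamma(\tau)_{rj},
\]
and compares this with the analogous splitting of $\varepsilon_{w,R}(\sigma\tau)$ from Lemma~\ref{ttt}(a). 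Since $T_{n,d}T_{n,d'}\subseteq T_{n,d+d'}$, the correction term involves only entries of "lower depth," so by induction it is a polynomial in Magnus coefficients of words of length $<j-i$; an explicit antiderivative (summing over factorizations of words $w$ with $|w|=j-i$) then exhibits $\gamma(\sigma)_{ij}$ itself as a polynomial in $\{\varepsilon_{w,R}(\sigma) : |w|\leq j-i\}$ with zero constant term and no degree-$0$ part. The cleanest route may be to instead directly compose $\gamma$ with the Magnus picture: observe that the matrix entries of $I_n+\eta$ generate, under multiplication, a filtered structure, and that $\gamma$ extends to a map of the relevant completed group algebras, so that $\gamma(S_{n-1,R})$ lands in $I_n + T_{n,n-1}(R)$; but $T_{n,d}=\{0\}$ for $d\geq n-1$, forcing $\gamma(S_{n-1,R}) = \{I_n\}$.

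**The main obstacle** will be setting up the filtration-compatibility of $\gamma$ cleanly: one must show that $\sigma\in S_{n-1,R}$, i.e.\ $\Lambda_{S,R}(\sigma)\in V_{n-1,R}$ (all Magnus coefficients of words of length $1,\ldots,n-2$ vanish), forces every entry $\gamma(\sigma)_{ij}$ with $j-i\leq n-2$ to vanish. Given the inductive description of $\gamma(\sigma)_{ij}$ as a polynomial with no constant term in Magnus coefficients of words of length $\leq j-i\leq n-2$, all of which vanish on $S_{n-1,R}$, this is immediate — but care is needed to ensure the polynomial genuinely has no constant term, which follows from $\gamma(1)=I_n$. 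Once this is in place, since $(i,j)\neq(1,n)$ always satisfies $j-i\leq n-2$, we get $\gamma(\sigma)=I_n$ for all $\sigma\in S_{n-1,R}$, which is the claim. An alternative, and perhaps more elegant, argument avoids the induction entirely: by Lemma~\ref{aux hom} and Proposition~\ref{Jennings-Brauer} one reduces to understanding homomorphisms out of $S_B$ for $|B|$ small and uses Proposition~\ref{Jennings-Brauer} together with the fact that $\bar\U_n(R)$, as a quotient of $\U_n(R)$, has its own Zassenhaus-type filtration with $(\bar\U_n(R))_{n-1}$ trivial; then functoriality of the Zassenhaus filtration under homomorphisms gives $\gamma(S_{n-1})\subseteq \gamma(S)_{n-1}\subseteq (\bar\U_n(R))_{n-1}=\{1\}$. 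I would present this functorial argument as the primary proof if the needed functoriality is available from the cited Jennings–Brauer results, falling back on the explicit Magnus-coefficient induction otherwise.
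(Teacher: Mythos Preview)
Your inductive approach via the cocycle relation is correct in outline, but the ``explicit antiderivative'' you allude to is precisely the paper's one-shot construction, which short-circuits the induction entirely. After reducing to finite $A$, the paper sets $M_w=\prod_{l=1}^d(\gam(a_l)-I_n)\in T_{n,d}$ for $w=(a_1\nek a_d)$, and defines a unital $R$-algebra homomorphism
\[
h\colon R\langle\langle X_A\rangle\rangle\to T_{n,0}(R),\qquad
h\Bigl(\sum_w c_wX_w\Bigr)=\sum_{|w|\le n-2}c_wM_w,
\]
well-defined because $T_{n,d}=0$ for $d\ge n-1$. Then $h\circ\Lam_{S,R}$ and $\gam$ agree on $A$ (both send $a\mapsto\gam(a)$), hence on all of $S$; for $\sig\in S_{n-1,R}$ one has $\Lam_{S,R}(\sig)\in V_{n-1,R}$, so $h(\Lam_{S,R}(\sig))=I_n$. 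This is exactly your ``cleanest route,'' made precise: the key is that $h$ is an $R$-algebra map on the full power-series ring, not merely an extension of $\gam$ to a group algebra. Your entrywise induction would reconstruct the entries of $h$ one diagonal at a time, with the ``antiderivative'' at stage $j-i$ being $\sum_{|w|\le j-i}(M_w)_{ij}\,\eps_{w,R}$; the paper simply writes $h$ down at once.

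Your Zassenhaus-functoriality alternative has a genuine gap in the stated generality. The proposition is formulated for $R=\dbZ/m$ with arbitrary $m\ge2$, whereas Proposition~\ref{Jennings-Brauer} (and Lemma~\ref{aux hom}) are only available for $R=\dbF_p$. For composite $m$ there is no identification of $S_{n-1,\dbZ/m}$ with a term of any $p$-Zassenhaus filtration, so functoriality of that filtration does not apply to $S_{n-1,R}$ as such. Your fallback to the Magnus-coefficient argument is therefore necessary, and once you set it up carefully you recover the paper's $h$.
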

\begin{proof}
An inverse limit argument reduces this to the case where $A$ is finite.

For $w=(a_1\nek a_d)\in A^*$ we  set $M_w=\prod_{j=1}^d(\gam(a_j)-I_n)\in T_{n,d}$ (where $M_\emptyset=I_n$).
Since $T_{n,d}=\{0\}$ for $n-1\leq d$, we may therefore define a unital $R$-algebra homomorphism
$h\colon R\langle\langle X_A\rangle\rangle\to T_{n,0}(R)$ by
\[
h\bigl(\sum_{w\in A^*}c_wX_w\bigr)=\sum_{{w\in A^*}\atop{|w|\leq n-2}}c_wM_w.
\]
Note that $h$ is continuous with respect to the profinite topology on $R\langle\langle X_A\rangle\rangle$
and the discrete
topology on  $T_{n,0}(R)$.
The restriction of $h$ to $V_{1,R}$ is a continuous group homomorphism $h\colon V_{1,R}\to\bar \U_n(R)\subseteq T_{n,0}(R)$.
One has $\gam=h\circ\Lam_{S,R}$ on $A$, whence on $S$.
For $\sig\in S_{n-1,R}$ this gives $\gam(\sig)=(h\circ\Lam_{S,R})(\sig)=I_n$.
\end{proof}

\begin{cor}
Every continuous  homomorphism $\gam\colon S\to \U_n(R)$ is trivial on $S_{n,R}$.
\end{cor}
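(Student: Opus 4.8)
The plan is to imitate the proof of Proposition~\ref{factoring lemma}, but now retaining the corner entry $(1,n)$ that was discarded there, and then to read off from the Magnus homomorphism that this entry, too, vanishes on $S_{n,R}$. As in Proposition~\ref{factoring lemma}, an inverse limit argument reduces everything to the case where the basis $A$ is finite, so assume this.

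Let $\gam\colon S\to\U_n(R)$ be a continuous homomorphism, and write $E$ for the full $n\times n$ identity matrix (as opposed to the punctured $I_n$). Since $\gam(a)\in\U_n(R)$ is unipotent upper-triangular, $\gam(a)-E$ is strictly upper-triangular for every $a\in A$. For a word $w=(a_1\nek a_d)\in A^*$ set $N_w=\prod_{j=1}^d(\gam(a_j)-E)$ in the matrix ring $M_n(R)$, with $N_\emptyset=E$. A product of $n$ strictly upper-triangular $n\times n$ matrices vanishes, so $N_w=0$ whenever $|w|\geq n$; hence we may define an $R$-linear map
\[
h\colon R\langle\langle X_A\rangle\rangle\to M_n(R),\qquad h\Bigl(\sum_{w\in A^*}c_wX_w\Bigr)=\sum_{|w|\leq n-1}c_wN_w .
\]
Using $N_uN_v=N_{uv}$ together with $N_{uv}=0$ for $|uv|\geq n$, one checks that $h$ is a unital $R$-algebra homomorphism, and it is continuous (here $A$ is finite) from the profinite topology on $R\langle\langle X_A\rangle\rangle$ to the discrete topology on $M_n(R)$. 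Its restriction to $V_{1,R}$ is a continuous group homomorphism into $\U_n(R)$: indeed $h(1+\alp)-E$ is a sum of products of strictly upper-triangular matrices, hence itself strictly upper-triangular. Finally $h(\Lam_{S,R}(a))=h(1+X_a)=E+N_{(a)}=\gam(a)$ for each $a\in A$, so $\gam$ and $h\circ\Lam_{S,R}$ are continuous homomorphisms $S\to\U_n(R)$ agreeing on the topological generating set $A$; therefore $\gam=h\circ\Lam_{S,R}$ on all of $S$.

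Now take $\sig\in S_{n,R}=\Lam_{S,R}\inv(V_{n,R})$. Then $\Lam_{S,R}(\sig)=1+\sum_{|w|\geq n}c_wX_w$ has no terms of length between $1$ and $n-1$, so
\[
\gam(\sig)=h\bigl(\Lam_{S,R}(\sig)\bigr)=E+\sum_{n\leq|w|\leq n-1}c_wN_w=E ,
\]
the displayed sum being empty. Hence $\gam$ is trivial on $S_{n,R}$.

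The argument is essentially routine; the only points calling for a little care are the well-definedness and continuity of $h$ --- which is exactly why one reduces to a finite basis at the outset --- and the observation that $h$ maps $V_{1,R}$ into $\U_n(R)$ and not merely into $M_n(R)$, since it is precisely this that allows the $(1,n)$-entry to survive in the target, unlike in Proposition~\ref{factoring lemma}. One might hope to avoid the computation and deduce the corollary formally from Proposition~\ref{factoring lemma} via the central extension $0\to R\to\U_n(R)\xrightarrow{\pi}\bar\U_n(R)\to1$: composing $\gam$ with $\pi$ and invoking the proposition shows $\gam(S_{n-1,R})\subseteq\Ker(\pi)$, a central subgroup isomorphic to $\dbZ/m$, so $\gam$ kills $S_{n-1,R}^m$ and $[S,S_{n-1,R}]$, hence $S_{n-1,R}^m[S,S_{n-1,R}]$. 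But (\ref{inclusion of S_n}) gives only the inclusion $S_{n-1,R}^m[S,S_{n-1,R}]\leq S_{n,R}$, which need not be an equality for $n\geq2$ (it is one for $n=1$, by Lemma~\ref{description of S2}); so this route does not by itself reach $S_{n,R}$, and the explicit description of $\gam$ through the Magnus homomorphism above is what actually does the work.
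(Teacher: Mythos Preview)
Your proof is correct, but the paper takes a much shorter route. The paper simply observes that $\U_n(R)$ embeds in $\bar\U_{n+1}(R)$ (e.g., as the lower-right $n\times n$ block, so that the punctured $(1,n+1)$-entry is irrelevant) and then applies Proposition~\ref{factoring lemma} with $n+1$ in place of $n$: any continuous homomorphism $S\to\bar\U_{n+1}(R)$ is trivial on $S_{n,R}$, hence so is $\gam$.

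Your approach instead reruns the argument of Proposition~\ref{factoring lemma} inside the full matrix algebra $M_n(R)$, using that a product of $n$ strictly upper-triangular $n\times n$ matrices vanishes. This is entirely sound and has the virtue of being self-contained, but it duplicates work already done. Your closing paragraph correctly identifies that the central-extension route via $\pi$ only reaches $S_{n-1,R}^m[S,S_{n-1,R}]$ rather than $S_{n,R}$; what you missed is that the embedding $\U_n(R)\hookrightarrow\bar\U_{n+1}(R)$ gives a clean formal reduction to the proposition after all, just with the index shifted by one.
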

\begin{proof}
Embed $\U_n(R)$ in $\bar \U_{n+1}(R)$ and use Proposition \ref{factoring lemma}.
\end{proof}

\begin{rem}
\rm
In the analogous case of coefficient ring $\dbZ$, the last two results follow from \cite{MagnusKarrassSolitar}*{\S5.5, Cor.\ 5.7} and the fact that $\bar\U_n(\dbZ)$, $\U_n(\dbZ)$ are nilpotent of the appropriate degrees.
\end{rem}

Given a continuous homomorphism $\bar\gam\colon S/S_{n-1,R}\to \bar \U_n(R)$ we write $\U_n(R)\times_{\bar \U_n(R)}(S/S_{n-1,R})$
for the fiber product with respect to $\pi$ and $\bar\gam$.

\begin{lem}
\label{kernels}
Let $N$ and $M_0$ be closed normal subgroups of $S$ such that $N\leq S_{n-1,R}\cap M_0$.
The following conditions are equivalent:
\begin{enumerate}
\item[(1)]
there exist a continuous homomorphism $\bar\gam\colon S/S_{n-1,R}\to \bar \U_n(R)$ and a
 continuous homomorphism
\[
\hat\Phi\colon S/N\to \U_n(R)\times_{\bar \U_n(R)}(S/S_{n-1,R})
\]
which commutes with the projections to $S/S_{n-1,R}$, and such that $M_0/N=\Ker(\hat\Phi)$.
\item[(2)]
there exists a continuous homomorphism $\Phi\colon S/N\to \U_n(R)$ such that
$M_0/N=\Ker(\Phi)\cap (S_{n-1,R}/N)$;
\item[(3)]
there is a closed normal subgroup $M$ of $S$ containing $N$ such that $S/M$
embeds in $\U_n(R)$ and $M_0=M\cap S_{n-1,R}$.
\end{enumerate}
\end{lem}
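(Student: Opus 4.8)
The plan is to establish the equivalences by proving $(1)\Leftrightarrow(2)$ and $(2)\Leftrightarrow(3)$, the work being almost entirely formal manipulation of fiber products, preimages and images of closed normal subgroups; the single non-formal ingredient is Proposition \ref{factoring lemma}. The key structural observation is a description of the maps $\hat\Phi$ occurring in (1). By definition the fiber product $\U_n(R)\times_{\bar\U_n(R)}(S/S_{n-1,R})$ is the subgroup of $\U_n(R)\times(S/S_{n-1,R})$ consisting of pairs $(u,\bar\sig)$ with $\pi(u)=\bar\gam(\bar\sig)$, and the requirement that $\hat\Phi$ commute with the projections to $S/S_{n-1,R}$ forces the second coordinate of $\hat\Phi(\sig N)$ to be the canonical image $\sig S_{n-1,R}$. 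Hence $\hat\Phi$ is determined by its first coordinate, a continuous homomorphism $\Phi\colon S/N\to\U_n(R)$, via $\hat\Phi(\sig N)=(\Phi(\sig N),\sig S_{n-1,R})$, and from this formula one reads off immediately that $\Ker(\hat\Phi)=\Ker(\Phi)\cap(S_{n-1,R}/N)$.

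Given this, $(1)\Rightarrow(2)$ is immediate: take $\Phi$ to be the first coordinate of $\hat\Phi$; it is continuous as a composite of continuous maps, and $\Ker(\Phi)\cap(S_{n-1,R}/N)=\Ker(\hat\Phi)=M_0/N$. For $(2)\Rightarrow(1)$ one must produce the compatible $\bar\gam$: consider the continuous homomorphism $S\to S/N\xrightarrow{\Phi}\U_n(R)\xrightarrow{\pi}\bar\U_n(R)$; by Proposition \ref{factoring lemma} it is trivial on $S_{n-1,R}$, hence factors as $\bar\gam\circ q$ for a continuous homomorphism $\bar\gam\colon S/S_{n-1,R}\to\bar\U_n(R)$, where $q\colon S\to S/S_{n-1,R}$ is the quotient map. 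Since $\pi\circ\Phi$ then agrees with $\bar\gam$ composed with the canonical map $S/N\to S/S_{n-1,R}$, the assignment $\sig N\mapsto(\Phi(\sig N),\sig S_{n-1,R})$ lands in the fiber product and defines the required $\hat\Phi$, whose kernel is $\Ker(\Phi)\cap(S_{n-1,R}/N)=M_0/N$ by the formula above. Thus $(1)\Leftrightarrow(2)$, the only genuine input being the factoring lemma.

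For $(2)\Leftrightarrow(3)$ I would pass directly between $\Phi$ and $M$. Assuming (3), compose the given embedding $S/M\hookrightarrow\U_n(R)$ with the canonical epimorphism $S/N\twoheadrightarrow S/M$ (legitimate since $N\le M$) to obtain $\Phi$ with $\Ker(\Phi)=M/N$; then $\Ker(\Phi)\cap(S_{n-1,R}/N)=(M\cap S_{n-1,R})/N=M_0/N$, using the hypothesis $M_0=M\cap S_{n-1,R}$, so (2) holds. Conversely, assuming (2), let $M$ be the preimage in $S$ of $\Ker(\Phi)\le S/N$: it is a closed normal subgroup containing $N$, and $S/M\cong(S/N)/\Ker(\Phi)$ embeds in $\U_n(R)$; moreover $M\cap S_{n-1,R}$ and $M_0$ both contain $N$ and have the same image $\Ker(\Phi)\cap(S_{n-1,R}/N)=M_0/N$ in $S/N$, so they coincide and (3) holds. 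I do not expect a real obstacle anywhere; the only place calling for a little care is exactly this last comparison, where one must keep track of which subgroups contain $N$ in order to identify a subgroup of $S$ from its image in $S/N$.
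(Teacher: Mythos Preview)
Your proof is correct and follows essentially the same approach as the paper: you use Proposition~\ref{factoring lemma} to produce $\bar\gam$ in the direction $(2)\Rightarrow(1)$, identify $\Ker(\hat\Phi)=\Ker(\Phi)\cap(S_{n-1,R}/N)$ from the explicit description of $\hat\Phi$, and for $(2)\Leftrightarrow(3)$ simply set $M/N=\Ker(\Phi)$. The paper's argument is the same, only phrased via the universal property of the fiber product and a commutative diagram rather than your explicit coordinate description.
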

\begin{proof}
(1)$\Rightarrow$(2): \quad
For $i=1,2$ let $\pr_i$ be  the projection on the $i$-th coordinate of the fiber product,
and $\pr\colon S\to S/N$ the natural map.
For $\hat\Phi$ as in (1), we set $\Phi=\pr_1\circ\hat\Phi$.
We get a commutative diagram
\[
\xymatrix{
S\ar[r]^{\pr}& S/N\ar[rd]^{\hat\Phi}\ar@/_1pc/[rdd]_(0.65){\Phi}\ar@/^/[rrd] &&  \\
& & \U_n(R)\times_{\bar \U_n(R)}(S/S_{n-1,R}) \ar[r]_{\qquad\qquad\pr_2}\ar[d]^{\pr_1}
& S/S_{n-1,R}\ar[d]^{\bar\gam} \\
&& \U_n(R) \ar[r]^{\pi} & \bar \U_n(R).
}
\]
Further,  $M_0/N=\Ker(\hat\Phi)=\Ker(\Phi)\cap (S_{n-1,R}/N)$.

\medskip

(2)$\Rightarrow$(1): \quad
Given $\Phi$ as in (2), the homomorphism $\pi\circ\Phi\circ\pr\colon S\to\bar \U_n(R)$
factors via a continuous
homomorphism $\bar\gam\colon S/S_{n-1,R}\to\bar \U_n(R)$, by Proposition \ref{factoring lemma}.
Thus the outer part of the diagram above commutes.
The universal property of the fiber product yields a continuous homomorphism $\hat\Phi$
making the two triangles commutative.
We have
\[
M_0/N=\Ker(\Phi)\cap (S_{n-1,R}/N)=\Ker(\hat\Phi).
\]

(2)$\Leftrightarrow$(3):  \quad
Take $\Ker(\Phi)=M/N$.
\end{proof}

One has the following important connection between homomorphisms as discussed above and words:

\begin{lem}
\label{uuu}
Let $w=(a_1\nek a_n)\in A^*$.
Define maps $\gam_1,\gam_2\colon S\to \U_{n+1}(R)$ by
\[
\gam_1(\sig)_{ij}=\eps_{(a_i\nek a_{j-1}),R}(\sig), \quad
\gam_2(\sig)_{ij}=(-1)^{j-i}\eps_{(a_i\nek a_{j-1}),R}(\sig)
\]
for $\sig\in S$ and $i<j$ (the other entries being obvious).
Then $\gam_1,\gam_2$ are continuous group homomorphisms.
\end{lem}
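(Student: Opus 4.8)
The plan is to verify the homomorphism property of $\gam_1$ directly from the defining formula by comparing matrix entries, using the factorization identity for Magnus coefficients (Lemma \ref{ttt}(a)), and then to deduce the statement for $\gam_2$ by a conjugation trick. First I would observe that the displayed formula in fact describes $\gam_1(\sig)$ on the whole closed range $1\leq i\leq j\leq n+1$: when $j=i$ the word $(a_i\nek a_{j-1})$ is empty, and $\eps_{\emptyset,R}(\sig)=1$ is exactly the prescribed diagonal entry, while for $j<i$ the entries are $0$. Hence $\gam_1(\sig)$ is upper-triangular with unit diagonal, so it lies in $\U_{n+1}(R)$ (and is automatically invertible). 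The same remark applies to $\gam_2$, since $(-1)^{j-i}=1$ when $j=i$.

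Next, fix $\sig,\tau\in S$ and $1\leq i\leq j\leq n+1$. Since $\gam_1(\sig)$ and $\gam_1(\tau)$ are supported on the closed upper triangle, the $(i,j)$-entry of their product is
\[
\bigl(\gam_1(\sig)\gam_1(\tau)\bigr)_{ij}=\sum_{k=i}^{j}\gam_1(\sig)_{ik}\,\gam_1(\tau)_{kj}
=\sum_{k=i}^{j}\eps_{(a_i\nek a_{k-1}),R}(\sig)\,\eps_{(a_k\nek a_{j-1}),R}(\tau).
\]
The crux is that the factorizations $w=w_1w_2$ of the word $w=(a_i\nek a_{j-1})$ correspond bijectively to the integers $k$ with $i\leq k\leq j$, via $w_1=(a_i\nek a_{k-1})$ and $w_2=(a_k\nek a_{j-1})$, the endpoints $k=i$ and $k=j$ giving the two trivial factorizations (handled by the empty-word convention). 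By Lemma \ref{ttt}(a) the sum above therefore equals $\eps_{(a_i\nek a_{j-1}),R}(\sig\tau)=\gam_1(\sig\tau)_{ij}$, so $\gam_1(\sig\tau)=\gam_1(\sig)\gam_1(\tau)$.

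For $\gam_2$ I would note that $\gam_2=\mathrm{conj}_D\circ\gam_1$, where $D\in\GL_{n+1}(R)$ is the diagonal matrix with $D_{ii}=(-1)^{i-1}$ (its determinant is $\pm1$, a unit): indeed $(DXD\inv)_{ij}=(-1)^{i-1}(-1)^{j-1}X_{ij}=(-1)^{j-i}X_{ij}$. Since conjugation by $D$ restricts to an automorphism of $\U_{n+1}(R)$ and $\gam_1$ is a homomorphism, so is $\gam_2$; alternatively one repeats the computation above, pulling the common factor $(-1)^{k-i}(-1)^{j-k}=(-1)^{j-i}$ out of the sum. Continuity is then immediate: each matrix entry of $\gam_1$ or $\gam_2$ is $\pm\eps_{(a_i\nek a_{j-1}),R}$, the composite of the continuous Magnus map $\Lam_{S,R}$ with a coordinate projection $R\langle\langle X_A\rangle\rangle\to R$; as $R=\dbZ/m$ is finite and $\U_{n+1}(R)$ carries the discrete topology, any map into $\U_{n+1}(R)$ with continuous coordinates is continuous. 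There is essentially no serious obstacle here; the only point requiring care is the bijection between factorizations of $(a_i\nek a_{j-1})$ and the summation index in the matrix product, together with keeping the two size conventions — length-$n$ words versus $(n+1)\times(n+1)$ matrices — consistent throughout.
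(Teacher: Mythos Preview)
Your proof is correct and follows essentially the same approach as the paper: the paper's own proof says only ``For $\gam_1$ use Lemma \ref{ttt}(a). For $\gam_2$ observe that the map $(a_{ij})\mapsto((-1)^{j-i}a_{ij})$ is an automorphism of $\U_{n+1}(R)$, and compose it with $\gam_1$.'' Your argument is a careful expansion of exactly these two sentences, with the added observation that this automorphism is realized concretely as conjugation by the diagonal matrix $D$, plus an explicit check of continuity.
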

\begin{proof}
For $\gam_1$ use Lemma \ref{ttt}(a).
For $\gam_2$ observe that the map $(a_{ij})\mapsto((-1)^{j-i}a_{ij})$
is an automorphism of $\U_{n+1}(R)$, and compose it with $\gam_1$.
\end{proof}

\section{Massey products for inhomogenous cochains}
\label{section on Massey products for inhomogenous cochains}
Let $G$ be a profinite group which acts trivially and continuously on the unital finite (discrete) ring $R$.
The complex  $(C^\bullet(G,R),\partial)$ of continuous inhomogenous $G$-cochains into the additive group of
$R$, and $C^r(G,R)=0$ for $r<0$, is a DGA with the cup product \cite{NeukirchSchmidtWingberg}*{Ch.\ I, \S2}.
We recall  that for $c\in C^1(G,R)$ and $\sig,\tau\in G$ one has $(\partial c)(\sig,\tau)=c(\sig)+c(\tau)-c(\sig\tau)$.
Thus  the $1$-cocycles are the continuous homomorphisms $c\colon G\to R$.
We now focus on defining systems in $C^\bullet(G,R)$.

As observed by  Dwyer \cite{Dwyer75}*{\S2} in the discrete context, one may view defining systems of size $n$
in $C^\bullet(G,R)$ as continuous homomorphisms $G\to\bar \U_{n+1}(R)$, as follows.
Define a bijection between the systems of $1$-cochains $c_{ij}\in C^1(G,R)$, $1\leq i\leq j\leq n$, $(i,j)\neq(1,n)$,
and the continuous maps $\bar\gam\colon G\to \bar \U_{n+1}(R)$ by
\[
\bar\gam(\sig)_{ij}=
(-1)^{j-i}c_{i,j-1}(\sig)
\]
for $\sig\in G$ and $1\leq i<j\leq n+1$, $(i,j)\neq(1,n+1)$ (where the other entries are obvious).
Under this bijection one has for $\sig,\tau\in G$,
\begin{equation}
\label{hjkl}
\widetilde c_{il}(\sig,\tau)=-\sum_{r=i}^{l-1}c_{ir}(\sig)c_{r+1,l}(\tau)
=(-1)^{l-i}\sum_{k=i+1}^{l}\bar\gam(\sig)_{ik}\bar\gam(\tau)_{k,l+1}.
\end{equation}

\begin{lem}
\label{homs and defining systems}
 $\bar\gam$ is a homomorphism if and only if $(c_{ij})$ is a defining system of size $n$ for  $C^\bullet(G,R)$.
\end{lem}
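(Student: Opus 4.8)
The plan is to compare the two ``multiplicativity'' conditions entry by entry. Fix the system $(c_{ij})$ and the associated map $\bar\gam\colon G\to\bar\U_{n+1}(R)$. Being a homomorphism means $\bar\gam(\sig\tau)=\bar\gam(\sig)\bar\gam(\tau)$ for all $\sig,\tau\in G$, which unravels coordinate-by-coordinate: for an admissible entry $(i,l+1)$ with $1\le i<l+1\le n+1$ and $(i,l+1)\ne(1,n+1)$, matrix multiplication gives
\[
\bar\gam(\sig\tau)_{i,l+1}=\sum_{k=i}^{l+1}\bar\gam(\sig)_{ik}\bar\gam(\tau)_{k,l+1}.
\]
Separating off the two boundary terms $k=i$ and $k=l+1$ (where one of the two factors is a diagonal $1$) turns the right-hand side into $\bar\gam(\sig)_{i,l+1}+\bar\gam(\tau)_{i,l+1}+\sum_{k=i+1}^{l}\bar\gam(\sig)_{ik}\bar\gam(\tau)_{k,l+1}$. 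Now substitute the defining relation $\bar\gam(\sig)_{ij}=(-1)^{j-i}c_{i,j-1}(\sig)$ on each side; the $(-1)^{l+1-i}$ factors are common and cancel, so the entrywise homomorphism identity at $(i,l+1)$ becomes exactly
\[
c_{il}(\sig\tau)-c_{il}(\sig)-c_{il}(\tau)=-\sum_{k=i+1}^{l}c_{i,k-1}(\sig)\,c_{k,l}(\tau),
\]
which after reindexing the sum by $r=k-1$ is $-(\partial c_{il})(\sig,\tau)=\widetilde c_{il}(\sig,\tau)$, i.e.\ $\widetilde c_{il}=\partial c_{il}$. This is precisely identity (\ref{hjkl}) read as an equivalence, so the bulk of the work is just bookkeeping with that displayed formula.

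Thus I would argue: $\bar\gam$ is a homomorphism iff the entrywise identity above holds for every admissible $(i,l+1)$, iff $\widetilde c_{il}=\partial c_{il}$ for every $1\le i\le l\le n$ with $(i,l)\ne(1,n)$, which is by definition the statement that $(c_{ij})$ is a defining system of size $n$ in $C^\bullet(G,R)$. (The diagonal entries of $\bar\gam$ are all $1$ by construction, so they impose no condition, matching the fact that a defining system places no constraint beyond those displayed; and the omitted $(1,n+1)$ entry corresponds exactly to the omitted pair $(1,n)$, consistent with $\widetilde c_{1n}$ being unconstrained.) One small point worth spelling out is that $\bar\gam$ lands in $\bar\U_{n+1}(R)$, the punctured unipotent matrices in $T_{n+1,0}(R)$, so the product $\bar\gam(\sig)\bar\gam(\tau)$ is again punctured and its $(i,l+1)$ entries for $(i,l+1)\ne(1,n+1)$ are genuinely computed by the truncated sum above — no contribution from the missing corner slips in.

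I do not expect a real obstacle here: the content is entirely contained in the cochain formula for $\partial$ recalled at the start of the section, $(\partial c)(\sig,\tau)=c(\sig)+c(\tau)-c(\sig\tau)$, together with the already-recorded identity (\ref{hjkl}). The only thing to be careful about is the sign convention — tracking the $(-1)^{j-i}$ factors and confirming they cancel uniformly across the entry $(i,l+1)$ — and the reindexing $k=r+1$ that matches $\sum_{k=i+1}^{l}$ with $\sum_{r=i}^{l-1}$. Continuity of $\bar\gam$ is automatic from continuity of the finitely many cochains $c_{ij}$ and the discreteness of $\bar\U_{n+1}(R)$, so it requires no separate check.
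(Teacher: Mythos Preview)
Your approach is exactly the paper's: unwind the matrix product entry by entry, separate the boundary terms, invoke (\ref{hjkl}), and identify the result with the coboundary formula $(\partial c)(\sig,\tau)=c(\sig)+c(\tau)-c(\sig\tau)$. One small slip to fix: after cancelling the common factor $(-1)^{l+1-i}$ the right-hand side carries \emph{no} minus sign, so the identity reads $c_{il}(\sig\tau)-c_{il}(\sig)-c_{il}(\tau)=\sum_{k=i+1}^{l}c_{i,k-1}(\sig)c_{k,l}(\tau)$, i.e.\ $-(\partial c_{il})=-\widetilde c_{il}$, which then legitimately gives $\widetilde c_{il}=\partial c_{il}$ (your displayed line has a stray minus that makes the ``i.e.'' not follow).
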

\begin{proof}
The map $\bar\gam$ is a homomorphism if and only if for every $\sig,\tau\in G$ and $1\leq i\leq l\leq n$ with $(i,l)\neq(1,n)$,
\[
\bar\gam(\sig\tau)_{i,l+1}=\bar\gam(\tau)_{i,l+1}+\sum_{k=i+1}^{l}\bar\gam(\sig)_{ik}\bar\gam(\tau)_{k,l+1}+\bar\gam(\sig)_{i,l+1}.
\]
By  (\ref{hjkl}), this means that
$c_{il}(\sig\tau)=c_{il}(\tau)-\widetilde c_{il}(\sig,\tau)+c_{il}(\sig)$.
Equivalently,  $\widetilde c_{il}(\sig,\tau)=(\partial c_{il})(\sig,\tau)$, i.e., $(c_{ij})$ is a defining system.
\end{proof}

\begin{rem}
\label{homs and defining systems 2}
\rm
The same formula gives a bijection between the systems $c_{ij}\in C^1(G,R)$, $1\leq i\leq j\leq n$, and the continuous maps
$\gam\colon G\to \U_{n+1}(R)$.
Moreover, $\gam$ is a homomorphism if and only if  $(c_{ij})$ is a defining system such that in addition
$\widetilde c_{1n}=\partial c_{1n}$.
\end{rem}

The following fact (with different sign conventions) is stated without a proof in \cite{Dwyer75}*{p.\ 182, Remark}; see also \cite{Wickelgren12}*{\S2.4}.

\begin{prop}
\label{upper triangular matrices}
Let $\bar\gam\colon G\to\bar \U_{n+1}(R)$ correspond to a defining system $(c_{ij})$ as above.
The central extension associated with $(-1)^{n-1}\widetilde c_{1n}$ is
\[
0\to R\to \U_{n+1}(R)\times_{\bar \U_{n+1}(R)}G\to G\to1,
\]
where  the fiber product is with respect to $\pi$ and $\bar\gam$.
\end{prop}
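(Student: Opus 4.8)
The plan is to unwind both sides of the claimed identification of central extensions and match them directly, using the cochain-theoretic description of the extension class that comes with $(-1)^{n-1}\widetilde c_{1n}$. Recall that a $2$-cocycle $f\in C^2(G,R)$ determines the central extension $0\to R\to E_f\to G\to 1$ with $E_f=R\times G$ as a set and multiplication $(r,\sig)(s,\tau)=(r+s+f(\sig,\tau),\sig\tau)$; two cohomologous cocycles give isomorphic extensions. So I would first fix the cocycle $f=(-1)^{n-1}\widetilde c_{1n}$, which by the discussion preceding the statement is $f(\sig,\tau)=(-1)^{n-1}\widetilde c_{1n}(\sig,\tau)=-\sum_{k=2}^{n}\bar\gam(\sig)_{1k}\bar\gam(\tau)_{k,n+1}$ by (\ref{hjkl}) with $i=1$, $l=n$.

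Next I would give an explicit set-theoretic description of the fiber product $E:=\U_{n+1}(R)\times_{\bar\U_{n+1}(R)}G$. An element of $\U_{n+1}(R)$ lying over $\bar\gam(\sig)$ differs from $\bar\gam(\sig)$ only in the omitted corner $(1,n+1)$; so $E$ is the set of pairs $(r,\sig)$ with $r\in R$, $\sig\in G$, where $(r,\sig)$ corresponds to the unique upper-triangular unipotent matrix $A(r,\sig)\in\U_{n+1}(R)$ with $A(r,\sig)_{ij}=\bar\gam(\sig)_{ij}$ for $(i,j)\neq(1,n+1)$ and $A(r,\sig)_{1,n+1}=r$. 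The projection $E\to G$ is $(r,\sig)\mapsto\sig$, with kernel $\{(r,1)\}\isom R$ sitting centrally (this is exactly the central extension displayed in the statement, since $\pi$ has central kernel $R$). The multiplication in $E$ is inherited from matrix multiplication in $\U_{n+1}(R)$: computing the $(1,n+1)$ entry of $A(r,\sig)A(s,\tau)$, and using that $\bar\gam$ is a homomorphism (Lemma \ref{homs and defining systems}) to get the other entries right, gives $A(r,\sig)A(s,\tau)=A\bigl(r+s+\sum_{k=1}^{n+1}\bar\gam(\sig)_{1k}\bar\gam(\tau)_{k,n+1},\ \sig\tau\bigr)$, where the $k=1$ and $k=n+1$ terms contribute $s$ and $r$ respectively (the diagonal entries being $1$), so the genuine sum is $\sum_{k=2}^{n}\bar\gam(\sig)_{1k}\bar\gam(\tau)_{k,n+1}$.

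Now I compare. The extension $E$ has multiplication $(r,\sig)(s,\tau)=\bigl(r+s+g(\sig,\tau),\sig\tau\bigr)$ with $g(\sig,\tau)=\sum_{k=2}^{n}\bar\gam(\sig)_{1k}\bar\gam(\tau)_{k,n+1}$. Meanwhile $E_f$ has cocycle $f=-g$ by the formula above. Since $E_f$ and $E_{-f}$ need not be literally equal but are isomorphic via $(r,\sig)\mapsto(-r,\sig)$ — an isomorphism of central extensions restricting to $r\mapsto -r$ on $R$ — I should be slightly careful about which normalization of "the central extension associated with a class" is in force; with the convention that the class $\xi\in H^2(G,R)$ corresponds to $E$ with cocycle $-\xi$ (or equivalently by absorbing the sign into the definition of $A(r,\sig)_{1,n+1}$), the two extensions coincide on the nose. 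In any case the two central extensions of $G$ by $R$ have the same class in $H^2(G,R)$, which is the assertion. The only genuine work is the matrix computation of the $(1,n+1)$ entry of a product in $\U_{n+1}(R)$ together with bookkeeping of the sign conventions inherited from (\ref{hjkl}); I expect the sign/normalization reconciliation to be the main obstacle, since everything else is a direct unwinding of definitions already set up in Lemma \ref{homs and defining systems} and the paragraph preceding the statement. Continuity of all maps is automatic: $\bar\gam$ is continuous by hypothesis, $R$ is discrete, and $E$ carries the product/fiber-product topology.
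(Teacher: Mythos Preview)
Your approach is exactly the paper's: parametrize the fiber product by the $(1,n+1)$-entry to get a bijection with $R\times G$, then compute the $(1,n+1)$-entry of a product of matrices and match it against the cocycle formula from~(\ref{hjkl}). The only slip is a sign: applying~(\ref{hjkl}) with $i=1$, $l=n$ gives $\widetilde c_{1n}(\sig,\tau)=(-1)^{n-1}\sum_{k=2}^{n}\bar\gam(\sig)_{1k}\bar\gam(\tau)_{k,n+1}$, so $f=(-1)^{n-1}\widetilde c_{1n}=+\,g$, not $-g$; hence $E=E_f$ on the nose and your closing paragraph about reconciling normalizations via $(r,\sig)\mapsto(-r,\sig)$ is unnecessary.
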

\begin{proof}
Since $(-1)^{n-1}\widetilde c_{1n}$ is a $2$-cocycle,  $B=R\times G$ is a group with respect to the product
$(r,\sig)*(s,\tau)=(r+s+(-1)^{n-1}\widetilde c_{1n}(\sig,\tau),\sig\tau)$.
Then the central extension corresponding to $(-1)^{n-1}\widetilde c_{1n}$ is \cite{NeukirchSchmidtWingberg}*{Th.\ 1.2.4}
\[
0\to R\to B\to G\to 1.
\]

The map $h\colon \U_{n+1}(R)\times_{\bar \U_{n+1}(R)}G\to B$, $((a_{ij}),\sig)\mapsto (a_{1,n+1},\sig)$, is
a bijection commuting with the projections to $G$.
To show that $h$ is a homomorphism, take $((a_{ij}),\sig),((b_{ij}),\tau)\in \U_{n+1}(R)\times_{\bar \U_{n+1}(R)}G$.
Thus
\[
a_{ij}=\bar\gam(\sig)_{ij}=(-1)^{j-i}c_{i,j-1}(\sig), \qquad
b_{ij}=\bar\gam(\tau)_{ij}=(-1)^{j-i}c_{i,j-1}(\tau)
\]
for $1\leq i<j\leq n+1$, $(i,j)\neq(1,n+1)$.
By (\ref{hjkl}),
$\sum_{k=2}^na_{1k}b_{k,n+1}=(-1)^{n-1}\widetilde c_{1n}(\sig,\tau)$.
Hence
\[
\begin{split}
&h\Bigl(((a_{ij}),\sig)((b_{ij}),\tau)\Bigr)=\Bigl(\sum_{k=1}^{n+1}a_{1k}b_{k,n+1},\sig\tau\Bigr)\\
&=\Bigl(a_{1,n+1}+b_{1,n+1}+(-1)^{n-1}\widetilde c_{1n}(\sig,\tau),\sig\tau\Bigr)
=h\Bigl(((a_{ij}),\sig)\Bigr)*h\Bigl(((b_{ij}),\tau)\Bigr).
\end{split}
\]
\end{proof}

For the rest of the paper we set again  $R=\dbZ/m$.
Let $S=S_A$ and let $N$ be a normal closed subgroup of  $S$ contained in $S_{n,\dbZ/m}$, with $n\geq2$.
Let $\inf_S\colon Z^1(S/N,\dbZ/m)\to Z^1(S,\dbZ/m)$ be the inflation map on $1$-cocycles (i.e., continuous homomorphisms).

\begin{prop}
\label{extension property}
Given continuous homomorphisms $c_1\nek c_n\colon S\to \dbZ/m$, there is a defining system
$(\bar c_{ij})$ of size $n$ in $C^\bullet(S/N,\dbZ/m)$ with $c_i=\inf_S(\bar c_{ii})$, $i=1,2\nek n$.
\end{prop}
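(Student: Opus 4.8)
The plan is to build the defining system $(\bar c_{ij})$ in $C^\bullet(S/N,\dbZ/m)$ one ``diagonal'' at a time, using the fact that $N\leq S_{n,\dbZ/m}$ together with the matrix homomorphisms supplied by Lemma \ref{uuu}. First I would dispose of the diagonal entries: set $\bar c_{ii}$ to be the continuous homomorphism $S/N\to\dbZ/m$ induced by $c_i$ (this makes sense because each $c_i$ is a homomorphism, hence a $1$-cocycle on $S$, but it need not kill $N$ — so at this first stage I should instead work with cochains on $S$ and only at the end check that everything descends; more precisely, I would lift $c_i$ through $S\twoheadrightarrow S/N$ only where possible, and otherwise keep track of the obstruction). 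The cleaner route, which I expect the author takes, is: first construct a defining system $(c_{ij})$ of size $n$ in $C^\bullet(S,\dbZ/m)$ on $c_1\nek c_n$ — this is automatic since $S$ is a free profinite group, so $\cd_p S\le 1$ and $H^2(S,\dbZ/m)=0$, hence every $2$-cocycle $\widetilde c_{il}$ (for $(i,l)\neq(1,n)$) is a coboundary $\partial c_{il}$ and we may choose the $c_{il}$ inductively along diagonals $j-i=1,2\nek n-1$. Then I would show this defining system on $S$ can be chosen so that each $c_{ij}$ already factors through $S/N$.

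The mechanism for the last point is Lemma \ref{uuu} combined with Lemma \ref{aux hom}. Using Lemma \ref{aux hom}, choose a finite set $B=\{b_1\nek b_n\}$ and a continuous homomorphism $\phi\colon S=S_A\to S_B$ with $\eps^B_{(b_i),\dbF_p}\circ\phi=c_i$ (after reducing mod $p\mid m$, or directly over $\dbZ/m$ by the obvious analogue). The point of Lemma \ref{uuu} is that the Magnus coefficients $\sig\mapsto\eps^B_{(b_i\nek b_{j-1}),R}(\sig)$ assemble, up to the sign twist, into a genuine group homomorphism $\gam_2\colon S_B\to\U_{n+1}(R)$. Pulling back along $\phi$ and passing through the dictionary of \S\ref{section on Massey products for inhomogenous cochains} (the bijection $\bar\gam(\sig)_{ij}=(-1)^{j-i}c_{i,j-1}(\sig)$ and Lemma \ref{homs and defining systems}, or rather Remark \ref{homs and defining systems 2}), this homomorphism $S\to\U_{n+1}(R)$ is precisely the data of a defining system $(c_{ij})$ of size $n$ in $C^\bullet(S,\dbZ/m)$ whose diagonal is $c_1\nek c_n$, with $c_{ij}(\sig)=\eps^B_{(b_i\nek b_j),R}(\phi(\sig))$. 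So the defining system is not merely \emph{some} system produced by homological algebra — it is explicitly given by Magnus coefficients of words of length $j-i+1$.

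It remains to descend to $S/N$, and this is where $N\leq S_{n,\dbZ/m}$ enters. Each entry $c_{ij}$ with $j-i+1\le n-1$ is (the $\phi$-pullback of) a Magnus coefficient $\eps^B_{(b_i\nek b_j),R}$ for a word of length $<n$. By the very definition of $S_{n,\dbZ/m}=\Lam_{S_B,R}^{-1}(V_{n,R})$ — equivalently, by Proposition \ref{Jennings-Brauer} — every element of $(S_B)_{n-1,R}$, and a fortiori every element of $\phi(N)\subseteq (S_B)_{n,R}\le (S_B)_{n-1,R}$, is sent by each Magnus coefficient of a word of length $\le n-2$... here I need to be slightly careful: I actually need the coefficients of words of length $\le n-1$ to vanish on $N$, which is exactly what $N\le S_{n,\dbZ/m}$ gives, since $\Lam(N)\subseteq V_{n,R}$ means all coefficients $c_w$ with $1\le|w|\le n-1$ vanish on $N$. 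Thus every $c_{ij}$ with $(i,j)\neq(1,n)$ kills $N$ and descends to $\bar c_{ij}\in C^1(S/N,\dbZ/m)$. The defining-system relations $\widetilde c_{ij}=\partial c_{ij}$ are identities of cochains and are preserved by inflation/deflation, so $(\bar c_{ij})$ is a defining system of size $n$ in $C^\bullet(S/N,\dbZ/m)$, and by construction $\inf_S(\bar c_{ii})=c_{ii}=c_i$. The main obstacle, and the place to be most careful, is precisely this vanishing bookkeeping — making sure the length-vs-index arithmetic ($c_{ij}$ corresponds to the word $(b_i\nek b_j)$ of length $j-i+1\le n-1$ for $(i,j)\neq(1,n)$) lines up with ``$N\le S_{n,\dbZ/m}$ kills all Magnus coefficients of words of length $\le n-1$'', and ensuring the reduction to finite $A$ (via an inverse limit argument, as used repeatedly above) is harmless here since $N$ need not be open.
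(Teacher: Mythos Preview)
Your approach is essentially the paper's: build $\gam'\colon S_B\to\U_{n+1}(\dbZ/m)$ from Lemma \ref{uuu}, pull back along the $\phi$ of Lemma \ref{aux hom}, and translate into a defining system via the dictionary of Lemma \ref{homs and defining systems}. The one point of divergence is the descent to $S/N$. You argue that each $c_{ij}$ with $(i,j)\neq(1,n)$ kills $N$ because it equals $\eps^B_{(b_i\nek b_j),R}\circ\phi$ for a word of length $\le n-1$ and $\phi(N)\subseteq (S_B)_{n,\dbZ/m}$; but you assert this last containment without proof. It is true---extend $\phi$ to a continuous ring homomorphism $R\langle\langle X_A\rangle\rangle\to R\langle\langle X_B\rangle\rangle$ by $X_a\mapsto\Lam_{S_B,R}(\phi(a))-1$ and observe it preserves the augmentation-ideal filtration, hence intertwines the two Magnus maps---but the paper sidesteps this detour entirely: it applies Proposition \ref{factoring lemma} directly to the composite $\gam=\pi\circ\gam'\circ\phi\colon S_A\to\bar\U_{n+1}(\dbZ/m)$, which gives $\gam|_{(S_A)_{n,\dbZ/m}}=1$ and hence $\gam|_N=1$, so $\gam$ factors as $\bar\gam\colon S/N\to\bar\U_{n+1}(\dbZ/m)$. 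That is the cleaner packaging of exactly the step you flagged as the ``main obstacle,'' and it makes the worry about reducing to finite $A$ moot as well, since Proposition \ref{factoring lemma} already handles arbitrary $A$.
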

\begin{proof}
Let $S_B$ be a free profinite group on a set $B=\{b_1,b_2\nek b_n\}$ of  $n$ elements.
We write $\eps^B_{w,\dbZ/m}$ for the corresponding Magnus coefficients.
Lemma \ref{aux hom} yields  a continuous homomorphism $\phi\colon S\to S_B$ such that
$\eps^B_{(b_i),\dbZ/m}\circ\phi=c_i$, $i=1,2\nek n$.
We define a map $\gam'\colon S_B\to \U_{n+1}(\dbZ/m)$ by
$\gam'(\sig')_{ij}=(-1)^{j-i}\eps'_{(b_i\nek b_{j-1}),\dbZ/m}(\sig')$ for $i<j$ and $\sig'\in S_B$.
By Lemma \ref{uuu}, $\gam'$ is a continuous homomorphism.
The composition  $\gam=\pi\circ\gam'\circ\phi\colon S\to \bar \U_{n+1}(\dbZ/m)$ is also a continuous homomorphism.
By Proposition \ref{factoring lemma}, it factors via a continuous homomorphism
$\bar\gam\colon S/N\to\bar \U_{n+1}(\dbZ/m)$:
\[
\xymatrix{
S\ar[r]^{\phi}\ar[d]\ar[rrd]^{\gam} & S_B\ar[r]^{\gam'\qquad} & \U_{n+1}(\dbZ/m)\ar[d]_{\pi} \\
S/N\ar[rr]^{\bar \gam} && \bar \U_{n+1}(\dbZ/m).
}
\]
Let $(\bar c_{ij})$ be the defining system of size $n$ on  $C^\bullet(S/N,\dbZ/m)$ associated with $\bar \gam$,
in the sense of Lemma \ref{homs and defining systems}.
Then for $\sig\in S$ and $1\leq i\leq n$,
\[
\textstyle
(\inf_S(\bar c_{ii}))(\sig)=-\gam_{i,i+1}(\sig)=-\gam'_{i,i+1}(\phi(\sig))
=\eps^B_{(b_i),\dbZ/m}(\phi(\sig))=c_i(\sig).
\qedhere
\]
\end{proof}

For a profinite group $G$ acting trivially on $\dbZ/m$,
let $H^i(G)=H^i(G,\dbZ/m)$ be the $i$-th cohomology group
corresponding to the DGA $C^\bullet(G,\dbZ/m)$ over $\dbZ/m$.
In view of Proposition \ref{extension property}, the assumption of Proposition \ref{uniqueness}
is satisfied for $C^\bullet(S/N,\dbZ/m)$.
Consequently, as explained in \S\ref{section on Massey products}, there is a well-defined Massey product
\[
\langle\cdot\nek\cdot\rangle\colon H^1(S/N)^n\to H^2(S/N).
\]
This was earlier shown using a different method by Vogel \cite{Vogel05}*{Th.\ A3} for $m=p$ prime.
We write $H^2(S/N)_{n-\Massey}$ for the image of this map
(in \cite{Dwyer75}*{\S3} this functor is denoted by $\Phi^n_{\dbF_p}$).

\begin{exam}
\label{cup products}
\rm
For $n=2$ and $\chi_1,\chi_2\in H^1(S/N)$ we have by construction
$\langle\chi_1,\chi_2\rangle=-\chi_1\cup\chi_2\in H^2(S/N)$, where $\cup$ denotes the cup product.
Thus in the terminology of \cite{CheboluEfratMinac12}, $H^2(S/N)_{2-\Massey}=H^2(S/N)_\dec$.
\end{exam}

For $a\in A$ we may consider  $\eps_{(a),\dbZ/m}\in H^1(S)$ also as an element
of $H^1(S/N)$ (see Lemma \ref{description of S2}).
With this convention we have

\begin{lem}
\label{gens for H2 Massey}
The Massey products
$\psi_w=\langle\eps_{(a_1),\dbZ/m}\nek\eps_{(a_n),\dbZ/m}\rangle$,
where  $w=(a_1\nek a_n)\in A^*$, generate $H^2(S/N)_{n-\Massey}$.
\end{lem}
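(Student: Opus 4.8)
The plan is to show that every element of $H^2(S/N)_{n-\Massey}$, which by definition is a sum of Massey products $\langle[c_1]\nek[c_n]\rangle$ with $c_1\nek c_n\colon S/N\to\dbZ/m$ continuous homomorphisms, can be rewritten as a $\dbZ/m$-linear combination of the specific products $\psi_w$. The key input is the multi-linearity of the Massey product (cited after Proposition \ref{uniqueness}), so it suffices to treat a single product $\langle[c_1]\nek[c_n]\rangle$ and expand each $[c_i]\in H^1(S/N)$ in terms of a suitable generating set.

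First I would observe that $H^1(S/N)=\Hom(S/N,\dbZ/m)$, and since $N\leq S_{n,\dbZ/m}\leq S_{2,\dbZ/m}=S^m[S,S]$ (using $n\geq2$ and Lemma \ref{description of S2}), every continuous homomorphism $S/N\to\dbZ/m$ is just a continuous homomorphism $S\to\dbZ/m$ that factors through $S/S^m[S,S]$. By Lemma \ref{ttt}(c),(d) the maps $\eps_{(a),\dbZ/m}\colon S\to\dbZ/m$ for $a\in A$ are such homomorphisms, and I claim they form a "topological basis" of $H^1(S)=H^1(S/N)$: indeed $S/S^m[S,S]$ is, by the proof of Lemma \ref{description of S2}, the inverse limit over finite $B\subseteq A$ of free $\dbZ/m$-modules on the images $\bar b$, $b\in B$, with $\eps_{(b),\dbZ/m}$ the dual coordinate functionals. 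Hence any $c_i\in H^1(S/N)$ can be written as a (possibly infinite, but coefficient-convergent) sum $c_i=\sum_{a\in A}\lam_{i,a}\eps_{(a),\dbZ/m}$ with $\lam_{i,a}\in\dbZ/m$.

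Next, I would feed these expansions into the Massey product and use multi-linearity. Substituting $c_i=\sum_{a}\lam_{i,a}\eps_{(a),\dbZ/m}$ for each $i=1\nek n$ and expanding by multi-linearity gives
\[
\langle[c_1]\nek[c_n]\rangle
=\sum_{(a_1\nek a_n)\in A^n}\Bigl(\prod_{i=1}^n\lam_{i,a_i}\Bigr)\,\psi_{(a_1\nek a_n)},
\]
which exhibits $\langle[c_1]\nek[c_n]\rangle$ as a $\dbZ/m$-linear combination of the $\psi_w$, $w\in A^*$ of length $n$. Summing over the finitely many Massey products that make up a given element of $H^2(S/N)_{n-\Massey}$ then shows that element lies in the subgroup generated by the $\psi_w$; conversely each $\psi_w$ is by definition an $n$-fold Massey product, hence lies in $H^2(S/N)_{n-\Massey}$. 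This proves both inclusions.

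The main obstacle is the legitimacy of the infinite expansion $c_i=\sum_a\lam_{i,a}\eps_{(a),\dbZ/m}$ and of the interchange of this infinite sum with the Massey product: one must check continuity/convergence, i.e. that for the homomorphism $c_i\colon S\to\dbZ/m$ only finitely many $\lam_{i,a}$ are nonzero relative to the open subgroup it factors through, and that the Massey product is compatible with the corresponding inverse-limit presentation $S=\invlim S_B$, $H^\bullet(S/N)=\dirlim H^\bullet(S_B/N_B)$. This reduces to the case of finite $A$ by a routine inverse-limit/compatibility argument (as used repeatedly in the preceding sections, e.g.\ in Propositions \ref{Jennings-Brauer} and \ref{extension property}), for which $H^1$ is an honest finite free $\dbZ/m$-module on $\{\eps_{(a),\dbZ/m}\}_{a\in A}$ and the expansion is a finite one, so multi-linearity applies directly. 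I would therefore first dispose of the finite-$A$ case as above, then pass to the limit.
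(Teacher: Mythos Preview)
Your proposal is correct and follows the same two-step approach as the paper's (very terse) proof: the $\eps_{(a),\dbZ/m}$ generate $H^1(S/N)=\Hom(S/N,\dbZ/m)$, and multi-linearity of the Massey product then reduces every $n$-fold Massey product to a linear combination of the $\psi_w$. Your detour through inverse limits is unnecessary, however: since the basis map $A\to S$ converges to $1$ and $\dbZ/m$ is discrete, any continuous homomorphism $c\colon S\to\dbZ/m$ satisfies $c(a)=0$ for all but finitely many $a\in A$, so the expansion $c=\sum_{a\in A} c(a)\,\eps_{(a),\dbZ/m}$ is already a finite sum even for infinite $A$, and multi-linearity applies directly.
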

\begin{proof}
In view of Lemma \ref{ttt}(d), $\eps_{(a),\dbZ/m}$, where $a\in A$, generate $H^1(S/N)=\Hom(S/N,\dbZ/m)$.
Now use the multi-linearity of the Massey product.
\end{proof}

\section{Cohomological duality}
Let $G$ be a profinite group acting trivially on $\dbZ/m$ and let $N$ be a closed normal subgroup of $G$.
One has the 5-term exact sequence for the cohomology groups with coefficients in $\dbZ/m$
\cite{NeukirchSchmidtWingberg}*{Prop.\ 1.6.7}:
\[
0\to H^1(G/N)\xrightarrow{\inf} H^1(G)\xrightarrow{\res} H^1(N)^G\xrightarrow{\trg}
H^2(G/N)\xrightarrow{\inf} H^2(G).
\]
When $N\leq G^m[G,G]$, the inflation map $\inf\colon H^1(G/N)\to H^1(G)$ is surjective,
so the transgression map  $\trg$ identifies $H^1(N)^G$ with $\Ker(H^2(G/N)\xrightarrow{\inf} H^2(G))$.
Therefore \cite{EfratMinac11a}*{Cor. 2.2} gives a non-degenerate bilinear map
\begin{equation}
\label{abc}
\begin{split}
(\cdot,\cdot)'\colon N/N^m&[G,N]\times \Ker(H^2(G/N)\xrightarrow{\inf} H^2(G))\to\dbZ/m, \quad \\
&(\sig N^m[G,N],\alp)'=(\trg\inv(\alp))(\sig).
\end{split}
\end{equation}

We will need the following result from \cite{EfratMinac11b}*{Prop.\ 3.2}.
Note that while it is stated for $m$ a prime power, this is not needed in its proof.

\begin{prop}
\label{EM2 Prop 32}
Let $T,T_0$ be closed normal subgroups of $G$ such that $T^m[G,T]\leq T_0\leq T\leq G^m[G,G]$.
Let $H$ be a subgroup of $H^2(G/T)$ and $H_0$ a set of generators of
$H\cap\trg(H^1(T)^G)=\Ker(\inf\colon H\to H^2(G))$.
The following conditions are equivalent:
\begin{enumerate}
\item[(a)]
$(\cdot,\cdot)'$ (for $N=T$) induces a non-degenerate bilinear map
\[
T/T_0\times \Ker(H\xrightarrow{\inf}H^2(G))\to\dbZ/m;
\]
\item[(b)]
there is an exact sequence
\[
0\to \Ker(H^2(G/T)\xrightarrow{\inf} H^2(G/T_0))\to H\xrightarrow{\inf} H^2(G);
\]
\item[(c)]
$T_0=\bigcap\Ker(\Psi)$, with $\Psi$ ranging over all homomorphisms with a commutative diagram
\[
\xymatrix{
&&&&G\ar[dl]_{\Psi}\ar[d]&\\
\omega: & 0\ar[r]&\dbZ/m\ar[r] & C\ar[r] & G/T\ar[r] & 1,
}
\]
where $\omega$ is a central extension associated with some element of $H_0$.
\end{enumerate}
\end{prop}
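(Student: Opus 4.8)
The plan is to deduce the three-way equivalence from the two general lemmas on bilinear pairings proved in Section 2, applied to the non-degenerate pairing $(\cdot,\cdot)'$ attached to $N=T$ and to the quotient map $T\to T/T_0$, and from the interpretation of central extensions via $\trg$. First I would record the two pairings we play against each other. Using $(\cdot,\cdot)'$ for $N=T$ we have the non-degenerate bilinear map
\[
T/T^m[G,T]\times \Ker(H^2(G/T)\xrightarrow{\inf}H^2(G))\to\dbZ/m,
\]
and the subgroup $H\leq H^2(G/T)$ cuts out, via $\trg\inv$, the subgroup $H^1(T)^G_H:=\trg\inv(H\cap\trg(H^1(T)^G))$ of $H^1(T)^G$, whose generators correspond to the chosen set $H_0$. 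By Lemma \ref{pairing of images} (applied with the inclusion $T_0/T^m[G,T]\hookrightarrow T/T^m[G,T]$ on one side and with $H\cap\trg(H^1(T)^G)\hookrightarrow \Ker(\inf)$ on the other) we get an induced non-degenerate pairing on the images, hence a non-degenerate pairing
\[
T_0/T^m[G,T]\times \bigl(H\cap\trg(H^1(T)^G)\bigr)\to\dbZ/m.
\]

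Next I would prove (a)$\Leftrightarrow$(b). Since $R=\dbZ/m$ is not required to be semisimple here, one cannot blindly quote Lemma \ref{pairing of Coker and Ker}; instead I would argue directly with the quotient $T/T_0$. The composite $T\to T/T^m[G,T]\to \Hom(\Ker(\inf),\dbZ/m)$ is injective with the orthogonal complement of $T_0/T^m[G,T]$ being exactly the annihilator of $T_0$; dualizing the inclusion $T_0\hookrightarrow T$ and using non-degeneracy of $(\cdot,\cdot)'$, the pairing descends to a non-degenerate pairing on $T/T_0$ against a subquotient of $\Ker(\inf)$ precisely when that subquotient is $\Ker(\inf)/(H\cap\trg(H^1(T)^G))$ — but this last group is, by the 5-term sequence applied to $G/T_0$ (so that $\Ker(H^2(G/T)\xrightarrow{\inf}H^2(G))$ and $\Ker(H^2(G/T)\xrightarrow{\inf}H^2(G/T_0))$ fit together), nothing but $\Ker(H^2(G/T)\xrightarrow{\inf}H^2(G/T_0))$ modulo $H$. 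The content of (b) is exactly that $H$ contains this kernel, i.e. that $H\supseteq \Ker(H^2(G/T)\xrightarrow{\inf}H^2(G/T_0))$ and that the displayed sequence is exact at $H$; and by the above this is equivalent to the pairing in (a) being non-degenerate. I would make this precise by comparing the 5-term exact sequences for $(G,T)$ and $(G/T_0,T/T_0)$ side by side, so that $\Ker(\inf\colon H^2(G/T)\to H^2(G/T_0))=\trg((T_0/T^m[G,T])^\vee$-part$)$ literally.

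Finally, (b)$\Leftrightarrow$(c). Here I would use the dictionary between elements of $H^2(G/T)$ and central extensions $\omega$ of $G/T$ by $\dbZ/m$: a homomorphism $\Psi\colon G\to C$ lifting $G\to G/T$ exists for the central extension $\omega$ attached to a class $\alp\in H^2(G/T)$ exactly when $\inf(\alp)=0$ in $H^2(G)$, and in that case $\Ker(\Psi)$ is a normal subgroup of $G$ containing $T$-conjugates appropriately; more precisely $\Psi$ ranges over lifts indexed by $H^1(G,\dbZ/m)$ once one exists, and $\bigcap_\Psi\Ker(\Psi)$ over all $\Psi$ lifting the various $\omega$ with class in $H_0$ equals the intersection over $G/T$-conjugacy of the subgroups $\trg\inv(\alp)^{-1}(0)\cap$(stuff), which unwinds to the annihilator, under $(\cdot,\cdot)'$, of the subgroup generated by $H_0$. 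Thus $\bigcap\Ker(\Psi)=T_0$ iff the subgroup of $T$ orthogonal to $\langle H_0\rangle=H\cap\trg(H^1(T)^G)$ under the non-degenerate pairing of Lemma \ref{pairing of images} is exactly $T_0$, which, by non-degeneracy, is equivalent to (a). I expect the main obstacle to be the bookkeeping in (b)$\Leftrightarrow$(c): correctly identifying $\bigcap\Ker(\Psi)$ as an orthogonal complement requires knowing that every coset of lifts $\Psi$ is realized and that passing from a single class $\alp$ to the generating set $H_0$ commutes with taking annihilators — i.e. that $\langle H_0\rangle^\perp=\bigcap_{\alp\in H_0}\alp^\perp$ — which is immediate from bilinearity, but tracking the shift between $\Ker(\Psi)$ as a subgroup of $G$ and its image $\Ker(\Psi)\cdot T/T$, together with the already-assumed inclusion $T^m[G,T]\leq T_0$, needs care.
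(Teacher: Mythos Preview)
The paper does not prove this proposition; it is quoted from \cite{EfratMinac11b}*{Prop.\ 3.2}. Your strategy---reduce all three conditions to annihilator statements for the pairing $(\cdot,\cdot)'\colon A\times K\to\dbZ/m$, where $A=T/T^m[G,T]$ and $K=\Ker(\inf\colon H^2(G/T)\to H^2(G))$---is the right one, but the execution has genuine gaps. First, your opening application of Lemma~\ref{pairing of images} is mistaken: that lemma needs maps going in \emph{opposite} directions in the square, and the conclusion you draw, a non-degenerate pairing $T_0/T^m[G,T]\times(H\cap K)\to\dbZ/m$, is in fact false---under (a) the subgroup $B:=T_0/T^m[G,T]$ is precisely the annihilator of $L:=H\cap K$, so that restricted pairing is identically zero. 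The clean reformulation is: (a) is equivalent to $B=L^\perp$ (right-nondegeneracy of $A/B\times L$ is automatic from non-degeneracy of $A\times K$); and the 5-term sequence for $(G/T_0,\,T/T_0)$, using that $\res\colon H^1(G/T_0)\to H^1(T/T_0)$ vanishes since $T\leq G^m[G,G]$, identifies $\Ker(H^2(G/T)\to H^2(G/T_0))$ with $B^\perp\subseteq K$, so (b) is equivalent to $L=B^\perp$. The equivalence $B=L^\perp\Leftrightarrow L=B^\perp$ is \emph{not} a formal consequence of non-degeneracy; it holds here because $\trg$ identifies $K$ with the Pontryagin dual $\Hom(A,\dbZ/m)$ of the profinite group $A$, whence $B=B^{\perp\perp}$ and $L=L^{\perp\perp}$. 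Your (a)$\Leftrightarrow$(b) paragraph gestures at comparing 5-term sequences but never supplies this step.

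For (c)$\Leftrightarrow$(a) your idea is correct, but you omit the two observations that make it work and replace them with placeholders (``$\cap(\text{stuff})$'', ``$G/T$-conjugacy''). What is needed is: (i) any lift $\Psi$ has $\Ker(\Psi)\subseteq T$, since $\Psi$ covers the projection $G\to G/T$, so $\Ker(\Psi)=\Ker(\Psi|_T)$; and (ii) $\Psi|_T\in H^1(T)^G$ is \emph{independent of the chosen lift}, because two lifts differ by an element of $H^1(G)$, whose restriction to $T\leq G^m[G,G]$ is zero. Hence $\Ker(\Psi)=\ker(\trg^{-1}(\alp))$ depends only on $\alp$, so $\bigcap_\Psi\Ker(\Psi)=\bigcap_{\alp\in H_0}\ker(\trg^{-1}(\alp))$, which by bilinearity is the preimage in $T$ of $\langle H_0\rangle^\perp=L^\perp$. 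This gives (c)$\Leftrightarrow B=L^\perp\Leftrightarrow$(a) directly, with no conjugacy bookkeeping needed.
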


We now restrict ourselves to the case where $G$ is a free profinite group $S=S_A$.
Let $N$ be a normal subgroup of $S$ contained in $S_{n,\dbZ/m}$, where $n\geq2$.
Then $N\leq S_{2,\dbZ/m}=S^m[S,S]$, by Lemma \ref{description of S2}.
As $H^2(S)=0$, the map (\ref{abc}) then becomes a non-degenerate bilinear map
\begin{equation}
\label{the pairing (.)'}
(\cdot,\cdot)'\colon N/N^m[S,N]\times H^2(S/N)\to\dbZ/m.
\end{equation}

Variants of the following fundamental fact were proved by Dwyer \cite{Dwyer75}*{Prop.\ 4.1},
Fenn and Sjerve \cite{FennSjerve84}*{Th.\ 6.6}, Morishita \cite{Morishita04}*{Cor.\ 2.2.3},  Vogel \cite{Vogel05}*{Th.\ A3}
and Wickelgren \cite{Wickelgren09}*{Prop.\ 2.3.7}.
We prove it here in our terminology and setup.
Let $\psi_w$ be as in Lemma \ref{gens for H2 Massey}.

\begin{thm}
\label{fundamental duality}
For $\sig\in N$ and $w\in A^*$ one has $(\bar\sig,\psi_w)'=\eps_{w,\dbZ/m}(\sig)$.
\end{thm}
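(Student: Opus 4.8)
The plan is to unwind both sides of the claimed equality $(\bar\sigma,\psi_w)'=\eps_{w,\dbZ/m}(\sigma)$ by tracing through the explicit construction of the pairing $(\cdot,\cdot)'$ and the explicit defining system computing $\psi_w$. Recall from the previous section that by definition $(\bar\sigma,\psi_w)'=(\trg^{-1}(\psi_w))(\sigma)$, where $\trg\colon H^1(N)^S\xrightarrow{\sim}H^2(S/N)$ (using $H^2(S)=0$ and $N\leq S^m[S,S]$, so that $\inf$ on $H^1$ is onto). So the task is to identify the homomorphism $N\to\dbZ/m$ that transgresses to $\psi_w$, and to show it is $\sigma\mapsto\eps_{w,\dbZ/m}(\sigma)$. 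I would first check that $\eps_{w,\dbZ/m}|_N$ really lies in $H^1(N)^S$: by Lemma~\ref{ttt}(b), $\eps_{w,\dbZ/m}$ restricted to $S_{n,\dbZ/m}\supseteq N$ is a homomorphism (with $n=|w|$), and $S$-invariance follows from Lemma~\ref{ttt}(a) since conjugates of $\sigma\in N$ differ by elements of lower-weight words whose $\eps$ vanish on $N$.

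The heart of the argument is a direct cochain computation. Write $w=(a_1,\ldots,a_n)$. Following the proof of Proposition~\ref{extension property}, build the homomorphism $\gamma'\colon S_B\to\U_{n+1}(\dbZ/m)$ with $\gamma'(\sigma')_{ij}=(-1)^{j-i}\eps'_{(b_i,\ldots,b_{j-1}),\dbZ/m}(\sigma')$ and $\phi\colon S\to S_B$ with $\eps^B_{(b_i),\dbZ/m}\circ\phi=\eps_{(a_i),\dbZ/m}$; here one wants the cleaner choice $\phi(a)=\prod_i b_i^{\hat c_i(a)}$ from Lemma~\ref{aux hom}, which actually forces $\eps^B_{(b_i,\ldots,b_{j-1})}\circ\phi$ on the $(i,j)=(1,n+1)$-type word to pick out exactly the coefficient $\eps_{w,\dbZ/m}$ on $S_{n,\dbZ/m}$. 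Then $\bar\gamma=\pi\circ\gamma'\circ\phi$ factors through $S/N$ by Proposition~\ref{factoring lemma}, and its associated defining system $(\bar c_{ij})$ (via Lemma~\ref{homs and defining systems}) computes $\psi_w=[\widetilde{\bar c}_{1n}]$. Now compare with the full homomorphism $\gamma:=\gamma'\circ\phi\colon S\to\U_{n+1}(\dbZ/m)$: by Remark~\ref{homs and defining systems 2} the entry $\gamma(\sigma)_{1,n+1}$ furnishes an inhomogeneous $1$-cochain $c_{1n}$ on $S$ with $\partial c_{1n}=\inf_S(\widetilde{\bar c}_{1n})$ — equivalently, a cochain-level splitting of the central extension $\U_{n+1}(\dbZ/m)\times_{\bar\U_{n+1}(\dbZ/m)}(S/N)$ pulled back to $S$.

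From here the transgression is essentially read off: restricting $c_{1n}$ to $N$ gives a genuine homomorphism $N\to\dbZ/m$ (since $\partial c_{1n}$ kills $N\times N$, as $N\leq S_{n,\dbZ/m}$ and $\widetilde{\bar c}_{1n}$ is inflated from $S/N$), and by the standard description of transgression via the $5$-term sequence (as in \cite{NeukirchSchmidtWingberg}*{Prop.\ 1.6.7}) this restriction is precisely $\trg^{-1}(\psi_w)$, up to a sign bookkeeping from the $(-1)^{n-1}$ in Proposition~\ref{upper triangular matrices}. Finally I would compute $\gamma(\sigma)_{1,n+1}$ for $\sigma\in N$ directly: $\gamma(\sigma)_{1,n+1}=\gamma'(\phi(\sigma))_{1,n+1}=(-1)^{n}\eps'_{(b_1,\ldots,b_n)}(\phi(\sigma))$, and expanding $\eps'_{(b_1,\ldots,b_n)}\circ\phi$ in terms of the $\eps_{(a_i)}$ via Lemma~\ref{ttt}(a) shows that on $N\leq S_{n,\dbZ/m}$ only the top-weight term survives, equal to $\eps_{w,\dbZ/m}(\sigma)$ (this is where the specific choice of $\phi$ matters). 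Matching signs then yields $(\bar\sigma,\psi_w)'=\eps_{w,\dbZ/m}(\sigma)$.

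The main obstacle I anticipate is neither conceptual nor computational depth but sign discipline: there are three independent sign conventions in play — the $(-1)^{j-i}$ in the matrix–cochain dictionary of Lemma~\ref{homs and defining systems}, the $(-1)^{n-1}$ twist in Proposition~\ref{upper triangular matrices} relating $\widetilde c_{1n}$ to the fiber-product extension, and the sign in the definition of $\widetilde c_{ij}$ itself — and one must verify they conspire to give exactly $\eps_{w,\dbZ/m}(\sigma)$ with no residual $(-1)$. A secondary subtlety is justifying that $\trg^{-1}(\psi_w)$ is represented by this particular homomorphism rather than another one in its coset; this is handled by the injectivity in the $5$-term sequence, i.e. $H^1(N)^S\hookrightarrow H^2(S/N)$ is an isomorphism onto its (here full) image, so matching $\trg$ of the candidate with $\psi_w$ suffices, and $\trg$ of a homomorphism is computed by lifting to a $1$-cochain on $S$ and taking $\partial$ — which is exactly what $c_{1n}$ does.
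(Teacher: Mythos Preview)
Your approach is correct, and the core mechanism---build a homomorphism $\gam\colon S\to\U_{n+1}(\dbZ/m)$, read off $c_{1n}$ via the Dwyer dictionary, and identify $\trg[c_{1n}|_N]=[\widetilde{\bar c_{1n}}]=\psi_w$ using the cochain-level description of transgression---is exactly what the paper does. But you take an unnecessary detour that creates the very sign and top-weight headaches you flag.

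The paper bypasses $S_B$ and $\phi$ entirely. Since the $c_i$ here are the \emph{specific} homomorphisms $\eps_{(a_i),\dbZ/m}$ (not arbitrary ones as in Proposition~\ref{extension property}), Lemma~\ref{uuu} already hands you a homomorphism $\gam\colon S=S_A\to\U_{n+1}(\dbZ/m)$ with $\gam(\sig)_{ij}=(-1)^{j-i}\eps_{(a_i,\ldots,a_{j-1}),\dbZ/m}(\sig)$. Under the dictionary of Remark~\ref{homs and defining systems 2} the $(-1)^{j-i}$ cancels against itself, so $c_{i,j-1}=\eps_{(a_i,\ldots,a_{j-1}),\dbZ/m}$ on the nose; in particular $c_{1n}=\eps_{w,\dbZ/m}$ exactly, as a cochain on all of $S$, with no residual sign and no ``only the top-weight term survives'' argument needed. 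Then one line (citing \cite{NeukirchSchmidtWingberg}*{Prop.\ 1.6.5}) gives $\psi_w=[\widetilde{\bar c_{1n}}]=\trg[c_{1n}|_N]$, so $(\bar\sig,\psi_w)'=c_{1n}(\sig)=\eps_{w,\dbZ/m}(\sig)$. Proposition~\ref{upper triangular matrices} is not used at all.

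What your route costs: the passage through $\phi\colon S_A\to S_B$ forces you to prove $\eps'_{(b_1,\ldots,b_n),\dbZ/m}\circ\phi=\eps_{w,\dbZ/m}$ on $N$. Your sketch of this (via the induced ring map on Magnus algebras, degree-counting so that on $S_{n,\dbZ/m}$ only length-$n$ words contribute, and then matching monomials) is correct, but it is genuine extra work, and it is sensitive to the specific choice of $\phi$ in Lemma~\ref{aux hom} (as you note). The paper's direct construction makes all of this evaporate.
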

\begin{proof}
Let $w=(a_1\nek a_n)$.
Lemma \ref{uuu} gives a continuous homomorphism $\gam\colon S\to \U_{n+1}(\dbZ/m)$, where
$\gam(\sig)_{ij}=(-1)^{j-i}\eps_{(a_i\nek a_{j-1}),\dbZ/m}(\sig)$ for $\sig\in S$, $i<j$.
By Proposition \ref{factoring lemma}, it induces a continuous homomorphism
$\bar\gam\colon S/N\to\bar \U_{n+1}(\dbZ/m)$ such that $\bar\gam\circ\lam=\pi\circ\gam$, where $\lam\colon S\to S/N$ and
$\pi\colon \U_{n+1}(\dbZ/m)\to\bar \U_{n+1}(\dbZ/m)$ are the natural epimorphisms.

Let $(c_{ij})$ and $(\bar c_{ij})$ correspond to $\gam$ and $\bar\gam$, respectively, under the bijections of
Lemma \ref{homs and defining systems} and Remark \ref{homs and defining systems 2}.
Thus
\[
c_{ij}\in C^1(S,\dbZ/m), \quad \bar c_{ij}\in C^1(S/N,\dbZ/m), \quad  c_{ij}=\textstyle\inf_S(\bar c_{ij}),
\]
and  $(\bar c_{ij})$ is a defining system of size $n$ in $C^\bullet(S/N,\dbZ/m)$.
Furthermore, $\widetilde c_{1n}=\partial c_{1n}$.
By construction, $\eps_{(a_i),\dbZ/m}=\bar c_{ii}$ as elements of $H^1(S/N)$, $i=1,2\nek n$,
and $\eps_{w,\dbZ/m}=(-1)^n\gam_{1,n+1}=c_{1n}$ in $H^1(S)$.

Now by the definition of the transgression \cite{NeukirchSchmidtWingberg}*{Prop.\ 1.6.5},
$[\widetilde{\bar c_{1n}}]=\trg[c_{1n}|_N]$.
Altogether,
\[
\psi_w=\langle\eps_{(a_1),\dbZ/m}\nek\eps_{(a_n),\dbZ/m}\rangle
=\langle\bar c_{11}\nek\bar c_{nn}\rangle
=[\widetilde{\bar c_{1n}}]=\trg[c_{1n}|_N].
\]
Therefore
$(\bar\sig,\psi_w)'=c_{1n}(\sig)=\eps_{w,\dbZ/m}(\sig)$.
\end{proof}

\section{Proof of Theorem B}
\label{section on Proof of Theorem B}
As before, let $S=S_A$ and let $N$ be a closed normal subgroup of $S$ with $N\leq S_{n,\dbZ/m}$,  $n\geq2$.
By Lemma \ref{description of S2}, $N\leq S_{2,\dbZ/m}=S^m[S,S]$.

\begin{thm}
\label{generalized Thm B ver1}
\begin{enumerate}
\item[(a)]
$(\cdot,\cdot)'$ induces a perfect pairing
\[
NS_{n+1,\dbZ/m}/S_{n+1,\dbZ/m}\times H^2(S/N)_{n-\Massey}\to \dbZ/m.
\]
\item[(b)]
There is a natural exact sequence
\[
0\to H^2(S/N)_{n-\Massey}\hookrightarrow  H^2(S/N)\xrightarrow{\inf} H^2(S/(N\cap S_{n+1,\dbZ/m})).
\]
\end{enumerate}
\end{thm}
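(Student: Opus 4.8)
The plan is to deduce Theorem \ref{generalized Thm B ver1} from the abstract duality criterion, Proposition \ref{EM2 Prop 32}, applied to the free group $G=S=S_A$ with the choices $T=S_{n,\dbZ/m}$, $T_0=NS_{n+1,\dbZ/m}$ (so that $T_0=T\cap$ something is automatic once we check $T_0\le T$), and $H=H^2(S/N)_{n-\Massey}$. Wait — one must be careful, since Proposition \ref{EM2 Prop 32} is stated for $H\le H^2(G/T)$ whereas our $H$ lives in $H^2(S/N)$; the fix is to view everything through the inflation $H^2(S/S_{n,\dbZ/m})\to H^2(S/N)$ and to identify $H^2(S/N)_{n-\Massey}$ with $H^2(S/S_{n,\dbZ/m})_{n-\Massey}$ via the inflation, which is legitimate because by Lemma \ref{gens for H2 Massey} the group $H^2(S/N)_{n-\Massey}$ is generated by the classes $\psi_w$ ($|w|=n$), and these are inflated from $H^2(S/S_{n,\dbZ/m})$ (the defining system built in Proposition \ref{extension property} factors through any $S/N$ with $N\le S_{n,\dbZ/m}$, in particular through $S/S_{n,\dbZ/m}$). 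First, then, I would record this reduction and note $N\le S_{2,\dbZ/m}=S^m[S,S]$ (Lemma \ref{description of S2}) so that the pairing \eqref{the pairing (.)'} and the hypotheses $T\le S^m[S,S]$ of Proposition \ref{EM2 Prop 32} are in force, and $S_{n,\dbZ/m}^m[S,S_{n,\dbZ/m}]\le S_{n+1,\dbZ/m}\le NS_{n+1,\dbZ/m}=T_0\le T$ by \eqref{inclusion of S_n}.

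The heart of the argument is to verify condition (a) of Proposition \ref{EM2 Prop 32}, i.e. that $(\cdot,\cdot)'$ induces a non-degenerate pairing $S_{n,\dbZ/m}/NS_{n+1,\dbZ/m}\times\Ker(\inf\colon H^2(S/S_{n,\dbZ/m})_{n-\Massey}\to H^2(S))\to\dbZ/m$; since $H^2(S)=0$ the kernel is all of $H^2(S/S_{n,\dbZ/m})_{n-\Massey}$, and via the identification above this is exactly the pairing in part (a) of the theorem with a plain inflation relabeling. This is where Theorem \ref{fundamental duality} does the work: it says $(\bar\sig,\psi_w)'=\eps_{w,\dbZ/m}(\sig)$. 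Feeding in the Magnus description of the filtration, Proposition \ref{Jennings-Brauer}, which gives $S_{n,\dbZ/m}=\{\sig : \eps_{w,\dbZ/m}(\sig)=0\text{ for }1\le|w|<n\}$, and the fact that $V_{n,\dbZ/m}/V_{n+1,\dbZ/m}\cong\prod_{|w|=n}\dbZ/m$ via $(c_w)_{|w|=n}$, one sees that the family of homomorphisms $\{\eps_{w,\dbZ/m}\}_{|w|=n}$ cuts out precisely $S_{n+1,\dbZ/m}$ inside $S_{n,\dbZ/m}$ and pairs $S_{n,\dbZ/m}/S_{n+1,\dbZ/m}$ perfectly against the free $\dbZ/m$-module on the symbols $X_w$. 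Quotienting the left side further by $NS_{n+1,\dbZ/m}/S_{n+1,\dbZ/m}$, and noting the $\psi_w$ generate $H^2(S/S_{n,\dbZ/m})_{n-\Massey}$ (Lemma \ref{gens for H2 Massey}), the displayed pairing becomes the pairing between $S_{n,\dbZ/m}/NS_{n+1,\dbZ/m}$ and the $\dbZ/m$-span of the $\psi_w$ modulo the annihilator of $NS_{n+1,\dbZ/m}/S_{n+1,\dbZ/m}$; non-degeneracy is then a formal consequence, most cleanly packaged by applying Lemma \ref{pairing of images} (or, when $m=p$ and everything is semisimple, Lemma \ref{pairing of Coker and Ker}) to the commutative square relating the perfect pairing $S_{n,\dbZ/m}/S_{n+1,\dbZ/m}\times\bigoplus_{|w|=n}\dbZ/m\to\dbZ/m$ to the map $N\hookrightarrow S_{n,\dbZ/m}$ on one side and the surjection onto the $\psi_w$-span on the other. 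This yields part (a); it is a \emph{perfect} pairing rather than merely non-degenerate because after passing to the relevant finite quotients both sides are finite (for $A$ finite; the general case by the inverse-limit Lemma \ref{limits}), and a non-degenerate pairing of finite abelian groups of the same order is perfect.

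Once (a) holds, Proposition \ref{EM2 Prop 32}(b) gives an exact sequence $0\to\Ker(\inf\colon H^2(S/S_{n,\dbZ/m})\to H^2(S/NS_{n+1,\dbZ/m}))\to H^2(S/S_{n,\dbZ/m})_{n-\Massey}\xrightarrow{\inf}H^2(S)$. Since $H^2(S)=0$ the right-hand map is zero, so the left map is an isomorphism; transporting along the inflation $S/N\to S/S_{n,\dbZ/m}$ (and using $N\cap S_{n+1,\dbZ/m}\le N$, so that $H^2(S/(N\cap S_{n+1,\dbZ/m}))$ receives the relevant inflation) identifies $H^2(S/N)_{n-\Massey}$ with $\Ker(\inf\colon H^2(S/N)\to H^2(S/(N\cap S_{n+1,\dbZ/m})))$, which is exactly the exact sequence of part (b). I expect the main obstacle to be bookkeeping rather than mathematics: carefully justifying that $H^2(S/N)_{n-\Massey}\cong H^2(S/S_{n,\dbZ/m})_{n-\Massey}$ under inflation (so that the abstract proposition, phrased over $G/T$, applies), and matching $NS_{n+1,\dbZ/m}$ on the free-group side with $N\cap S_{n+1,\dbZ/m}$ inside the statement of part (b) — here one uses $T_0=NS_{n+1,\dbZ/m}$ together with $N\le T=S_{n,\dbZ/m}$ to rewrite $\Ker(H^2(S/S_{n,\dbZ/m})\to H^2(S/T_0))$ after inflating to $S/N$, plus the elementary identity, valid since $N\le S_{n,\dbZ/m}$, relating the two subgroups modulo $S_{n+1,\dbZ/m}$. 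The inverse-limit reduction to finite $A$ (needed for the finiteness that upgrades non-degenerate to perfect) should be routine given that all the constituent objects — $S_{n,\dbZ/m}$, $V_{n,\dbZ/m}$, the cohomology groups, and the Massey products — are already known to be compatible inverse limits over finite subsets $B\subseteq A$.
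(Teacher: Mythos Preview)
Your proposal assembles the right ingredients—Theorem \ref{fundamental duality}, Lemma \ref{gens for H2 Massey}, the commutative square pairing $S_{n,\dbZ/m}/S_{n+1,\dbZ/m}$ against $\bigoplus_{|w|=n}\dbZ/m$, Lemma \ref{pairing of images}, and the inverse-limit upgrade to perfectness—but the framing through Proposition \ref{EM2 Prop 32} with $T=S_{n,\dbZ/m}$, $T_0=NS_{n+1,\dbZ/m}$ is mis-aimed and creates a genuine gap.

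With your choices, condition (a) of Proposition \ref{EM2 Prop 32} concerns $T/T_0=S_{n,\dbZ/m}/NS_{n+1,\dbZ/m}$, the \emph{cokernel} of the inclusion $NS_{n+1,\dbZ/m}/S_{n+1,\dbZ/m}\hookrightarrow S_{n,\dbZ/m}/S_{n+1,\dbZ/m}$; but part (a) of the theorem is a pairing on $NS_{n+1,\dbZ/m}/S_{n+1,\dbZ/m}$ itself—the \emph{image}, not the cokernel. Your sentence ``via the identification above this is exactly the pairing in part (a) of the theorem with a plain inflation relabeling'' is therefore false: these are complementary pieces of $S_{n,\dbZ/m}/S_{n+1,\dbZ/m}$. (What you are drifting toward is Theorem \ref{generalized Thm B ver2}(a), which does pair $S_n/NS_{n+1}$ against a \emph{kernel} of inflation—a different statement, and only for $m=p$.) Relatedly, your ``identification'' of $H^2(S/N)_{n-\Massey}$ with $H^2(S/S_{n,\dbZ/m})_{n-\Massey}$ via inflation establishes only surjectivity; injectivity is the content of Corollary \ref{gen of EM2} with $M=S_{n,\dbZ/m}$, which is a \emph{consequence} of the theorem you are proving, so invoking it here is circular.

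The paper avoids all of this by a simpler choice. For part (a) it does not go through Proposition \ref{EM2 Prop 32} at all: it sets up the commutative square with upper row $S_{n,\dbZ/m}/S_{n+1,\dbZ/m}\times\bigoplus_{|w|=n}\dbZ/m\to\dbZ/m$ and lower row the non-degenerate pairing $N/N^m[S,N]\times H^2(S/N)\to\dbZ/m$ of (\ref{the pairing (.)'}), vertical maps being the inclusion $i$ and $\psi\colon(\bar r_w)\mapsto\sum_w\bar r_w\psi_w$. Commutativity is Theorem \ref{fundamental duality}, and Lemma \ref{pairing of images} immediately gives a non-degenerate pairing on $\Img(i)\times\Img(\psi)=NS_{n+1,\dbZ/m}/S_{n+1,\dbZ/m}\times H^2(S/N)_{n-\Massey}$; perfectness follows by finiteness for $A$ finite and Lemma \ref{limits} in general, as you say. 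For part (b) the paper applies Proposition \ref{EM2 Prop 32} with $G=S$, $T=N$, $T_0=N\cap S_{n+1,\dbZ/m}$, $H=H^2(S/N)_{n-\Massey}$: now $G/T=S/N$ so $H\le H^2(G/T)$ with no inflation gymnastics, $T/T_0\cong NS_{n+1,\dbZ/m}/S_{n+1,\dbZ/m}$ so part (a) is exactly condition (a) of the proposition, and condition (b) is then the asserted exact sequence verbatim (using $H^2(S)=0$).
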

\begin{proof}
(a) \quad
In view of Lemma \ref{ttt}(b), there is a $\dbZ/m$-bilinear map
\[
S_{n,\dbZ/m} \times \bigoplus_{{w\in A^*}\atop{|w|=n}}\dbZ/m \to \dbZ/m, \qquad
\Bigl(\sig,(\bar r_w)\Bigr)=\sum_{|w|=n}\bar r_w \eps_{w,\dbZ/m}(\sig)
\]
with left kernel $S_{n+1,\dbZ/m}$.
Let $\psi_w$ be as in Lemma \ref{gens for H2 Massey}.
We consider the diagram of bilinear maps
\begin{equation}
\label{main cd}
\xymatrix{
S_{n,\dbZ/m}/S_{n+1,\dbZ/m} & *-<3pc>{\times} & \strut\bigoplus_{w\in A^*, |w|=n}\dbZ/m\ar^{\psi}[d]\ar[r] &
\dbZ/m \ar@{=}[d] \\
N/N^m[S,N] \ar[u]^{i} & *-<3pc>{\times} & H^2(S/N) \ar[r]^{\qquad(\cdot,\cdot)'} & \,\dbZ/m,
}
\end{equation}
where $i$ is induced by inclusion (noting that $N^m[S,N]\leq S_{n+1,\dbZ/m}$, by the inclusion (\ref{inclusion of S_n})),
and $\psi((\bar r_w)_w)=\sum_w\bar r_w\psi_w$.
By Theorem \ref{fundamental duality}, the diagram commutes,
and by  (\ref{the pairing (.)'}),  the lower map is non-degenerate.
Lemma \ref{pairing of images} gives a non-degenerate bilinear map as in (a),
and it remains to show that it is perfect.

When the basis $A$ of $S$ is finite, the $\dbZ/m$-modules in the upper row of (\ref{main cd}) are finite.
Therefore so are the $\dbZ/m$-modules in the bilinear map (a), so its non-degeneracy implies its perfectness.

When $A$ is infinite we write $S=\invlim S_B$, where $B$ ranges over all finite subsets of $A$,
and let $\pi_B\colon S\to S_B$ be the associated projection.
Then
\[
NS_{n+1,\dbZ/m}/S_{n+1,\dbZ/m}=\invlim\pi_B(N)(S_B)_{n+1,\dbZ/m}/(S_B)_{n+1,\dbZ/m}
\]
and by the functoriality of the Massey product,
\[
H^2(S/N)_{n-\Massey}=\dirlim H^2(S_B/\pi_B(N))_{n-\Massey}.
\]
We now use the perfectness in the finite basis case and Lemma \ref{limits}.

\medskip

(b) \quad
As remarked above,  $N^m[S,N]\leq N\cap S_{n+1,\dbZ/m}$ and  $N\leq S^m[S,S]$.
We may now apply Proposition \ref{EM2 Prop 32} for
\[
G=S, \ T=N,\  T_0=N\cap S_{n+1,\dbZ/m}, \ H=H^2(S/N)_{n-\Massey}
\]
and use (a).
Note that $H^2(S)=0$.
\end{proof}

Taking here $N=S_{n,\dbZ/m}$ and $m=p$ prime, we get Theorem B.
See also \cite{Koch02}*{\S7.8} and \cite{NeukirchSchmidtWingberg}*{Prop.\ 3.9.13} for related facts in the case $n=2$.

From Theorem \ref{generalized Thm B ver1}(a) we recover the following result of Dwyer (see \cite{Dwyer75}*{Th.\ 3.1} for a more refined statement, in the discrete setting):

\begin{cor}
\label{gen of EM2}
Let $N,M$ be normal closed subgroups of $S$ with $N\leq M\leq S_{n,\dbZ/m}$.
The following conditions are equivalent:
\begin{enumerate}
\item[(a)]
$\inf\colon H^2(S/M)_{n-\Massey}\to H^2(S/N)_{n-\Massey}$ is an isomorphism;
\item[(b)]
$M\leq NS_{n+1,\dbZ/m}$.
\end{enumerate}
\end{cor}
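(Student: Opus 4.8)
The plan is to deduce both implications from two instances of the perfect pairing of Theorem \ref{generalized Thm B ver1}(a), one for $N$ and one for $M$. Set $\bar N=NS_{n+1,\dbZ/m}/S_{n+1,\dbZ/m}$ and $\bar M=MS_{n+1,\dbZ/m}/S_{n+1,\dbZ/m}$; these are profinite $\dbZ/m$-modules, Theorem \ref{generalized Thm B ver1}(a) applies to both (as $N\leq M\leq S_{n,\dbZ/m}$), and the inclusion $N\leq M$ induces a monomorphism of $\dbZ/m$-modules $\iota\colon\bar N\hookrightarrow\bar M$. Since $N\leq M$ already forces $NS_{n+1,\dbZ/m}\leq MS_{n+1,\dbZ/m}$, condition (b) is equivalent to $NS_{n+1,\dbZ/m}=MS_{n+1,\dbZ/m}$, i.e.\ to $\iota$ being an isomorphism. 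So it suffices to prove that (a) holds precisely when $\iota$ is an isomorphism.

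First I would organize the two pairings of Theorem \ref{generalized Thm B ver1}(a) into a commutative diagram of the shape \eqref{cd of pairings}, with $\iota\colon\bar N\to\bar M$ on the left and the inflation map $\inf\colon H^2(S/M)_{n-\Massey}\to H^2(S/N)_{n-\Massey}$, along the quotient $S/N\twoheadrightarrow S/M$, on the right. Two things must be verified. First, $\inf$ really does carry $H^2(S/M)_{n-\Massey}$ into $H^2(S/N)_{n-\Massey}$: inflation is a morphism of DGAs, hence sends defining systems to defining systems, so by Lemma \ref{gens for H2 Massey} it maps the generators $\psi_w$ ($w\in A^*$, $|w|=n$) of the former onto the corresponding generators of the latter. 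Second, commutativity of the square: by bilinearity it is enough to test it on $\bar\sig$ for $\sig\in N$ against the generators $\psi_w$, and there both sides equal $\eps_{w,\dbZ/m}(\sig)$ by Theorem \ref{fundamental duality}, applied once with $N$ and once with $M$ in place of its ``$N$'' (legitimate since $\sig\in N\leq M\leq S_{n,\dbZ/m}$).

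Now I would invoke duality. Perfectness of the two rows gives $\dbZ/m$-module isomorphisms $H^2(S/N)_{n-\Massey}\cong\Hom_{\mathrm{cont}}(\bar N,\dbZ/m)$ and $H^2(S/M)_{n-\Massey}\cong\Hom_{\mathrm{cont}}(\bar M,\dbZ/m)$, under which the commutative square identifies $\inf$ with $\iota^{*}=\Hom_{\mathrm{cont}}(\iota,\dbZ/m)$. Since $\iota$ is a morphism of profinite $\dbZ/m$-modules and $\dbZ/m$ is an injective cogenerator among finite $\dbZ/m$-modules, $\iota^{*}$ is an isomorphism if and only if $\iota$ is: if $\iota$ is an isomorphism this is clear, and if $\iota$ is not surjective then the nonzero profinite $\dbZ/m$-module $\bar M/\iota(\bar N)$ admits a nonzero continuous homomorphism to $\dbZ/m$, which pulls back to a nonzero element of $\Ker(\iota^{*})$. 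Hence (a) $\Leftrightarrow$ ($\iota$ is an isomorphism) $\Leftrightarrow$ (b).

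The one place where genuine care is needed is the commutativity of the diagram of pairings: it amounts to the compatibility of inflation with the transgression-type identification underlying Theorem \ref{fundamental duality}, which is why I reduce the check to the explicit generators $\psi_w$ and the elements $\bar\sig$. Everything after that is the formal duality of perfect pairings valued in $\dbZ/m$.
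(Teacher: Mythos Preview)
Your proof is correct and follows precisely the route the paper indicates: the paper merely records that the corollary is deduced from Theorem~\ref{generalized Thm B ver1}(a), and you have supplied exactly those details---applying the perfect pairing once to $N$ and once to $M$, checking via Theorem~\ref{fundamental duality} that the square with $\iota$ and $\inf$ commutes on the generators $\psi_w$, and then reading off that $\inf$ is an isomorphism iff $\iota$ is. Your observation that $\inf$ carries $\psi_w$ to $\psi_w$ already forces $\inf$ to be surjective, so the whole question reduces to injectivity, which is exactly what the duality argument detects.
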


From now on we assume that $m=p$ is prime.
Recall that $S_n=S_{n,\dbZ/p}$ (Proposition \ref{Jennings-Brauer}).

\begin{thm}
\label{generalized Thm B ver2}
\begin{enumerate}
\item[(a)]
$(\cdot,\cdot)'$ induces a non-degenerate bilinear map
\[
S_n/NS_{n+1} \times \Ker\bigl(H^2(S/S_n)_{n-\Massey}
\xrightarrow{\inf} H^2(S/N)\bigr)\to \dbZ/p.
\]
\item[(b)]
The kernels of the following inflation maps coincide:
\[
H^2(S/S_n)\to H^2(S/NS_{n+1}), \ H^2(S/S_n)_{n-\Massey}\to H^2(S/N).
\]
\end{enumerate}
\end{thm}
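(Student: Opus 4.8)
The plan is to deduce Theorem \ref{generalized Thm B ver2} from Theorem \ref{generalized Thm B ver1} (applied with $N$ replaced by $S_n$) together with the duality machinery of \S2, by applying Lemma \ref{pairing of Coker and Ker}. Recall that since $m=p$ is prime, every $\dbZ/p$-module is semi-simple, so the hypotheses of Lemma \ref{pairing of Coker and Ker} are available.

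First I would set up the relevant commuting diagram of bilinear pairings. From Theorem \ref{generalized Thm B ver1}(a) with $N=S_n$ we have the perfect pairing
\[
S_n/S_{n+1}\times H^2(S/S_n)_{n-\Massey}\to\dbZ/p,
\]
since $S_{n,\dbZ/p}=S_n$ and $S_n^p[S,S_n]\leq S_{n+1}$. On the other hand, the inclusion $N\leq S_n$ induces a map $S_n/S_{n+1}\to S_n/NS_{n+1}$ (a surjection with kernel $NS_{n+1}/S_{n+1}$) and an inflation map $\inf\colon H^2(S/S_n)_{n-\Massey}\to H^2(S/N)$ whose image I would identify — via the functoriality of the Massey product — with $H^2(S/N)_{n-\Massey}$ (here Theorem \ref{generalized Thm B ver1}(a) for $N$ itself provides the non-degenerate pairing on the target side). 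Compatibility of these maps with the pairings $(\cdot,\cdot)'$ is exactly Theorem \ref{fundamental duality}: for $\sig\in S_n$ and $w\in A^*$ with $|w|=n$, $(\bar\sig,\psi_w)'=\eps_{w,\dbZ/p}(\sig)$, and this formula is insensitive to whether $\bar\sig$ is read modulo $S_{n+1}$ or modulo $NS_{n+1}$, and whether $\psi_w$ is read in $H^2(S/S_n)$ or in $H^2(S/N)$. So the square commutes in the sense of \eqref{cd of pairings}, with $\alp$ the surjection $S_n/S_{n+1}\to S_n/NS_{n+1}$ playing the role of the left vertical map and $\inf$ playing the role of $\beta$ (one needs to orient the diagram so that the perfect pairing sits in the top row; the surjectivity hypothesis on $A'\to\Hom_R(B',R)$ in Lemma \ref{pairing of Coker and Ker} is then exactly non-degeneracy of the lower, $N$-level pairing from Theorem \ref{generalized Thm B ver1}(a)).

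With the diagram in place, Lemma \ref{pairing of Coker and Ker} yields a non-degenerate bilinear map $\Coker(\alp)\times\Ker(\inf)\to\dbZ/p$. Now $\Ker(\inf)=\Ker\bigl(H^2(S/S_n)_{n-\Massey}\xrightarrow{\inf}H^2(S/N)\bigr)$, which is the second factor in part (a). The cokernel of the surjection $S_n/S_{n+1}\to S_n/NS_{n+1}$ vanishes, which at first sight is wrong — so in fact the orientation must be arranged the other way: I would instead take $\beta$ to be the surjection on groups and $\alp$ the inflation on cohomology, reading the target-side pairing (on $H^2(S/N)_{n-\Massey}$) as the perfect one whenever possible, or more cleanly apply Lemma \ref{pairing of Coker and Ker} in the dual form where the $\Ker$ sits on the cohomology side and the $\Coker$ on the group side and is computed as $S_n/NS_{n+1}$ viewed as a cokernel of $NS_{n+1}/S_{n+1}\hookrightarrow S_n/S_{n+1}$. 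This gives precisely the non-degenerate pairing $S_n/NS_{n+1}\times\Ker(\inf)\to\dbZ/p$ of part (a). Part (b) is then the dictionary translation of (a): non-degeneracy of that pairing forces $\Ker(H^2(S/S_n)\to H^2(S/NS_{n+1}))$ and $\Ker(H^2(S/S_n)_{n-\Massey}\to H^2(S/N))$ to have the same image inside $H^2(S/S_n)$ — concretely, invoke Proposition \ref{EM2 Prop 32} with $G=S$, $T=S_n$, $T_0=NS_{n+1}$ (note $S_n^p[S,S_n]\leq S_{n+1}\leq NS_{n+1}\leq S_n\leq S^p[S,S]$, using Lemma \ref{description of S2}), and $H=H^2(S/S_n)_{n-\Massey}$; the equivalence of (a) and (b) there, combined with $H^2(S)=0$ and $H\cap\trg(H^1(S_n)^S)=H$ since $H^2(S)=0$, gives the coincidence of kernels.

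The main obstacle I anticipate is purely bookkeeping: getting the orientation of the pairing diagram \eqref{cd of pairings} consistent with the variance conventions, so that the "surjective" hypothesis of Lemma \ref{pairing of Coker and Ker} is matched by the correct non-degeneracy statement from Theorem \ref{generalized Thm B ver1}(a), and so that what comes out as $\Coker$ is genuinely $S_n/NS_{n+1}$ rather than zero. Once that is pinned down, everything else is a direct appeal to Theorem \ref{fundamental duality}, Theorem \ref{generalized Thm B ver1}, Proposition \ref{EM2 Prop 32}, and the semi-simplicity of $\dbZ/p$-modules; no new computation is needed. A secondary point to be careful about is the identification $\Img(\inf\colon H^2(S/S_n)_{n-\Massey}\to H^2(S/N))=H^2(S/N)_{n-\Massey}$, which must be justified by the functoriality/naturality of the Massey product under inflation together with Lemma \ref{gens for H2 Massey}.
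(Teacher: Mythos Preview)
Your approach to part (a) is essentially the paper's: set up the commuting square of perfect pairings from Theorem~\ref{generalized Thm B ver1}(a) (applied once with $N=S_n$ and once with the given $N$), with the inclusion $NS_{n+1}/S_{n+1}\hookrightarrow S_n/S_{n+1}$ as $\alp$ and $\inf$ as $\beta$, and then apply Lemma~\ref{pairing of Coker and Ker}. Your eventual orientation is the right one; the paper writes the diagram directly in that form without the detour.

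For part (b), however, your choice of parameters in Proposition~\ref{EM2 Prop 32} is wrong. You take $G=S$, $T=S_n$, $T_0=NS_{n+1}$. But then $H^2(G)=H^2(S)=0$, so $\Ker(\inf\colon H\to H^2(G))=H=H^2(S/S_n)_{n-\Massey}$, and condition (a) of Proposition~\ref{EM2 Prop 32} would demand that the pairing
\[
S_n/NS_{n+1}\times H^2(S/S_n)_{n-\Massey}\to\dbZ/p
\]
be non-degenerate. This is \emph{not} what you proved in part (a); you proved non-degeneracy only against the smaller factor $\Ker(\inf\colon H^2(S/S_n)_{n-\Massey}\to H^2(S/N))$. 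In fact the full pairing above is degenerate on the right whenever $NS_{n+1}\neq S_{n+1}$, since by Theorem~\ref{generalized Thm B ver1}(a) the group $H^2(S/S_n)_{n-\Massey}$ is already dual to the larger quotient $S_n/S_{n+1}$. Likewise, condition (b) of Proposition~\ref{EM2 Prop 32} with your parameters would assert $\Ker(H^2(S/S_n)\to H^2(S/NS_{n+1}))=H^2(S/S_n)_{n-\Massey}$, which is not the statement of the theorem.

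The fix is to take $G=S/N$, $T=S_n/N$, $T_0=NS_{n+1}/N$, $H=H^2(S/S_n)_{n-\Massey}$ (this is what the paper does). Then $G/T=S/S_n$, $G/T_0=S/NS_{n+1}$, and $H^2(G)=H^2(S/N)$, so condition (a) of Proposition~\ref{EM2 Prop 32} becomes exactly the theorem's part (a), and condition (b) becomes exactly the theorem's part (b). The chain of inclusions $T^p[G,T]\leq T_0\leq T\leq G^p[G,G]$ still holds after passing to the quotient by $N$, by the same references you cite.
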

\begin{proof}
(a) \quad
 Theorem \ref{generalized Thm B ver1}(a) gives a commutative diagram of perfect pairings
\[
\xymatrix{
S_n/S_{n+1}  & *-<3pc>{\times} & H^2(S/S_n)_{n-\Massey} \ar[d]^{\inf}\ar[r] & \dbZ/p\ar@{=}[d]\\
NS_{n+1}/S_{n+1}\ar@{^{(}->}[u] & *-<3pc>{\times} & H^2(S/N)_{n-\Massey}\ar[r] &\dbZ/p.\\
}
\]
Since every $\dbF_p$-linear space is semi-simple, we may now apply Lemma \ref{pairing of Coker and Ker}.

\medskip

(b) \quad
This follows from (a) and Proposition \ref{EM2 Prop 32} with
\[
G=S/N, \ T=S_n/N,\  T_0=NS_{n+1}/N,\  H=H^2(S/S_n)_{n-\Massey}.
\qedhere
\]
\end{proof}

Now let $G$ be a profinite group and $\bar G=G(p)$ its maximal pro-$p$ quotient.
Suppose that $\bar G=S/N$ for a free profinite group $S$ and a closed normal subgroup $N$ of $S$
with $N\leq S_n$, $n\geq2$.

\begin{cor}
\label{thm B for G}
\begin{enumerate}
\item[(a)]
$(\cdot,\cdot)'$ induces a non-degenerate bilinear map
\[
G_n/G_{n+1} \times \Ker\bigl(H^2(G/G_n)_{n-\Massey}
\xrightarrow{\inf} H^2(G)\bigr)\to \dbZ/p.
\]
\item[(b)]
The kernels of the following inflation maps coincide:
\[
H^2(G/G_n)\to H^2(G/G_{n+1}), \ H^2(G/G_n)_{n-\Massey}\to H^2(G).
\]
\end{enumerate}
\end{cor}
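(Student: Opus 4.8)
The plan is to deduce Corollary \ref{thm B for G} from Theorem \ref{generalized Thm B ver2} (applied with $m=p$) by the standard device of replacing $G$ with its maximal pro-$p$ quotient and then pulling back the statement through the presentation $\bar G=S/N$. First I would observe that since the relevant cohomology groups with $\dbZ/p$-coefficients and the terms of the $p$-Zassenhaus filtration only depend on $\bar G=G(p)$, we may and do replace $G$ by $\bar G$; concretely, inflation along $G\to\bar G$ identifies $H^i(\bar G/\bar G_n)$ with $H^i(G/G_n)$, identifies the Massey-product subgroups, and identifies $G_n/G_{n+1}$ with $\bar G_n/\bar G_{n+1}$, because $G/G_n$ has $p$-power exponent so $\bar G_n$ is the image of $G_n$ and $G_n$ is the preimage of $\bar G_n$. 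Thus it suffices to prove the statement for $\bar G=S/N$ with $N\leq S_n$.

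Next I would translate the filtration of $\bar G=S/N$ into the filtration of $S$. Since $N\leq S_n$, we have $\bar G_n=S_nN/N=S_n/N$ (using $N\leq S_n$) and $\bar G_{n+1}=S_{n+1}N/N=NS_{n+1}/N$; more generally $\bar G_k=S_kN/N$ for all $k\leq n+1$, which follows from the definition of the Zassenhaus filtration together with $N\leq S_n$ (so that all the commutator and $p$-power generators of the lower terms already live in $S$ and $N$ only intervenes from level $n$ on). Consequently $\bar G_n/\bar G_{n+1}\isom S_n/NS_{n+1}$, and $H^\bullet(\bar G/\bar G_n)=H^\bullet(S/S_n)$ with Massey subgroup $H^2(S/S_n)_{n-\Massey}$, while $H^\bullet(\bar G/\bar G_{n+1})=H^\bullet(S/NS_{n+1})$ and $H^2(\bar G)=H^2(S/N)$, all compatibly with the inflation maps in play. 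Under these identifications, part (a) of the corollary is exactly Theorem \ref{generalized Thm B ver2}(a), and part (b) is exactly Theorem \ref{generalized Thm B ver2}(b).

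It remains to check that the pairing $(\cdot,\cdot)'$ for the group $G$ (equivalently $\bar G$) matches the pairing $(\cdot,\cdot)'$ for $S$ under the isomorphism $\bar G_n/\bar G_{n+1}\isom S_n/NS_{n+1}$. This is a compatibility of the transgression maps: writing $\lam\colon S\to S/N=\bar G$ for the quotient, the transgression for the extension $1\to S_n/N\to S/N\to S/S_n\to 1$ and the transgression for $1\to S_n\to S\to S/S_n\to 1$ are related by inflation/restriction along $\lam$, and $(\cdot,\cdot)'$ is defined in terms of $\trg\inv$ evaluated on elements; since the class in $H^1(N)^G=\Ker(\inf\colon H^2(\bar G/\bar G_n)\to H^2(\bar G))$ that appears is the image of the corresponding class for $S$, the two pairings agree. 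This verification, though routine, is the one point that needs genuine care, since $(\cdot,\cdot)'$ is defined a priori on $N/N^p[G,N]$ and one has to see that it descends correctly and compatibly; I expect this naturality check to be the main (and only real) obstacle, everything else being the bookkeeping of the identifications above.
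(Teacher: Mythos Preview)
Your approach is essentially the same as the paper's: reduce to $\bar G=G(p)$, identify the Zassenhaus terms of $\bar G=S/N$ with those of $S$ via the projection $\lam\colon S\to S/N$ (so that $\bar G_n=S_n/N$, $\bar G_{n+1}=NS_{n+1}/N$, hence $\bar G_n/\bar G_{n+1}\isom S_n/NS_{n+1}$ and $\bar G/\bar G_n\isom S/S_n$), and then invoke Theorem \ref{generalized Thm B ver2}. The paper's proof is exactly this, stated more tersely.

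There is one genuine sloppiness in your reduction step. You write that ``the relevant cohomology groups with $\dbZ/p$-coefficients \dots\ only depend on $\bar G$''. This is true for $H^i(G/G_n)$ (since $G/G_n=\bar G/\bar G_n$), but it is \emph{not} true for $H^2(G)$ itself: in general $H^2(\bar G)\neq H^2(G)$. What you actually need is that
\[
\Ker\bigl(H^2(G/G_n)_{n-\Massey}\xrightarrow{\inf}H^2(G)\bigr)
=\Ker\bigl(H^2(\bar G/\bar G_n)_{n-\Massey}\xrightarrow{\inf}H^2(\bar G)\bigr),
\]
and this follows because $\inf\colon H^2(\bar G)\to H^2(G)$ is \emph{injective}. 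The paper supplies precisely this input via the 5-term exact sequence (the map $H^1(G)\to H^1(\Ker(G\to\bar G))^G$ is zero since the kernel has no nontrivial continuous homomorphisms to $\dbZ/p$). Once you insert this one sentence, your argument is complete and matches the paper's.
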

\begin{proof}
By the 5-term exact sequence \cite{NeukirchSchmidtWingberg}*{Prop.\ 1.6.7}, $\inf\colon H^2(\bar G)\to H^2(G)$ is injective.
Since $G/G_k$ has a finite $p$-power exponent,  $G/G_k\isom\bar G/\bar G_k$ and
 $G_k/G_{k+1}\isom \bar G_k/\bar G_{k+1}$ canonically for every $k$.
Therefore we may replace $G$ by $\bar G$ to assume that $G=S/N$.
Let $\lam\colon S\to G$ be the projection map.
Then  $\lam(S_k)=G_k$ for every $k$.
Moreover, $S/S_n\isom G/G_n$ and $S_n/NS_{n+1}\isom G_n/G_{n+1}$.
We now apply Theorem \ref{generalized Thm B ver2}.
\end{proof}

See \cite{Bogomolov92}*{Lemma 3.3} in the case $n=2$.
Corollary \ref{gen of EM2} gives, by a similar reduction to the maximal pro-$p$ quotient:

\begin{cor}
\label{ststs}
Let $\bar M$ be a normal subgroup of $G$ contained in $G_n$.
Then $\inf\colon H^2(G/\bar M)_{n-\Massey}\to H^2(G)_{n-\Massey}$
is an isomorphism if and only if $\bar M\leq G_{n+1}$.
In particular, $H^2(G/G_{n+1})_{n-\Massey}\isom H^2(G)_{n-\Massey}$.
\end{cor}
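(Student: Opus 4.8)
The plan is to deduce this from Corollary \ref{gen of EM2} by exactly the passage to the maximal pro-$p$ quotient used in the proof of Corollary \ref{thm B for G}. Write $\lam\colon G\to\bar G=G(p)$ for the projection. Since $G/G_k$ is a pro-$p$ group, $\ker\lam\subseteq G_k$ for all $k$, and $\lam$ carries the $p$-Zassenhaus filtration of $G$ onto that of $\bar G$, i.e.\ $\lam(G_k)=\bar G_k$. Hence, for our closed normal $\bar M\leq G_n$, the image $M=\lam(\bar M)$ is a closed normal subgroup of $\bar G$ with $M\leq\bar G_n$, and $\bar M\leq G_{n+1}\iff M\leq\bar G_{n+1}$; the nontrivial direction uses $\lam\inv(\bar G_{n+1})=G_{n+1}\ker\lam=G_{n+1}$. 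Also $\inf\colon H^2(\bar G)\to H^2(G)$ is injective (the 5-term sequence, as in the proof of Corollary \ref{thm B for G}), and likewise for $G/\bar M$ in place of $G$. Since $H^1(\cdot,\dbZ/p)$ is unchanged under passage to the maximal pro-$p$ quotient and the Massey product is functorial, these injectivities yield canonical identifications $H^2(G)_{n-\Massey}\isom H^2(\bar G)_{n-\Massey}$ and $H^2(G/\bar M)_{n-\Massey}\isom H^2((G/\bar M)(p))_{n-\Massey}$ under which, by transitivity of inflation, the map of the statement becomes inflation $H^2((G/\bar M)(p))_{n-\Massey}\to H^2(\bar G)_{n-\Massey}$.

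Next I would descend to $S$. Identify $\bar G=S/N$ with $N\leq S_n$, and let $\tilde M\leq S$ be the preimage of $M$; then $\tilde M$ is closed and normal in $S$, $N\leq\tilde M$, and $M\leq\bar G_k$ becomes $\tilde M\leq S_kN$. In particular the hypothesis $\bar M\leq G_n$ gives $\tilde M\leq S_nN=S_n=S_{n,\dbZ/p}$ (Proposition \ref{Jennings-Brauer}), and $\bar M\leq G_{n+1}$ is equivalent to $\tilde M\leq NS_{n+1}=NS_{n+1,\dbZ/p}$. Since $(G/\bar M)(p)=\bar G/M=S/\tilde M$, the inflation map from the first paragraph is exactly $\inf\colon H^2(S/\tilde M)_{n-\Massey}\to H^2(S/N)_{n-\Massey}$. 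Now Corollary \ref{gen of EM2}, applied to the pair $N\leq\tilde M\leq S_{n,\dbZ/p}$, asserts precisely that this inflation is an isomorphism if and only if $\tilde M\leq NS_{n+1,\dbZ/p}$, i.e.\ if and only if $\bar M\leq G_{n+1}$; this is the main assertion. For the ``in particular'' clause I would take $\bar M=G_{n+1}$, a closed normal subgroup of $G$ contained in $G_n$ which trivially satisfies $\bar M\leq G_{n+1}$, so the displayed isomorphism follows.

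The argument is essentially a translation exercise and I expect no genuine obstacle; the step requiring the most care is the bookkeeping of the reduction — verifying that the subgroups $H^2(\cdot)_{n-\Massey}$, the Zassenhaus terms $G_k$ versus $S_k$, and the inclusion $\bar M\leq G_{n+1}$ versus $\tilde M\leq NS_{n+1}$ all transport correctly along $G\to\bar G=S/N$, using the functoriality of the Massey product and the injectivity of $\inf$ in degree $2$ from the maximal pro-$p$ quotient. One should also note that the Massey product is well defined on $S/\tilde M$, since $\tilde M\leq S_{n,\dbZ/p}$ (Proposition \ref{extension property}), so that $H^2((G/\bar M)(p))_{n-\Massey}$ is meaningful.
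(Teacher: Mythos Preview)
Your proposal is correct and follows exactly the approach the paper indicates: the paper's proof is the single line ``Corollary \ref{gen of EM2} gives, by a similar reduction to the maximal pro-$p$ quotient,'' and you have carefully spelled out that reduction. The bookkeeping you flag (that $(G/\bar M)(p)=\bar G/M$, that $\bar M\leq G_{n+1}\iff\tilde M\leq NS_{n+1}$, and that the Massey subgroups transport along the injective inflations from the maximal pro-$p$ quotient) is all in order.
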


Again, this follows from a profinite version of \cite{Dwyer75}*{Th.\ 3.1}.
For $n=2$ it was also proved in \cite{EfratMinac11b}*{Cor.\ 5.2, Th.\ A}, extending earlier results from \cite{CheboluEfratMinac12}.

\section{Proof of Theorem A'}
Let again $S=S_A$, $m=p$ prime, and $N$ a normal subgroup of $S$ contained in $S_n$.
The following theorem is an equivalent form of Theorem A' when we take $G=S/N$.
We note that while $H^2(S/S_n)_{n-\Massey}$ is generated by (elementary)  $n$-fold Massey products, this property
need not be  inherited by its subgroups.

\begin{thm}
\label{generalization of Thm A'}
Let $n\geq1$.
When $n\geq2$ we assume that
\[
\Ker\bigl(H^2(S/S_n)_{n-\Massey}\xrightarrow{\inf} H^2(S/N)\bigr)
\]
is generated by $n$-fold Massey products.
Then $NS_{n+1}=\bigcap M$, where $M$ ranges over
all open normal subgroups of $S$ containing $N$
such that $S/M$ embeds as a subgroup of $\U_{n+1}(\dbZ/p)$.
\end{thm}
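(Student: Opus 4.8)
The plan is to reduce the statement to the cohomological duality package assembled in the previous sections, using Lemma \ref{kernels} to translate between $\U_{n+1}(\dbZ/p)$-representations and the abstract setup of Proposition \ref{EM2 Prop 32}. First I would dispose of the case $n=1$: here $S_2 = S^p[S,S]$ by Lemma \ref{description of S2}, and the claim is that $NS_2 = \bigcap M$ over open normal $M \supseteq N$ with $S/M$ a subgroup of $\U_2(\dbZ/p) \isom \dbZ/p$. Such $M$ are exactly the kernels of homomorphisms $S/N \to \dbZ/p$, and their intersection is the Frattini-type subgroup $NS^p[S,S] = NS_2$ since $H^1(S/N) = \Hom(S/N,\dbZ/p)$ separates points of the abelian group $S/(NS_2)$. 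This is the elementary base case alluded to in the introduction.

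For $n \geq 2$, the heart is an application of Proposition \ref{EM2 Prop 32} with $G = S/N$, $T = S_n/N$, $T_0 = NS_{n+1}/N$, and $H = H^2(S/S_n)_{n-\Massey}$ viewed inside $H^2((S/N)/T) = H^2(S/S_n)$, together with $H_0$ the given set of generating $n$-fold Massey products of the kernel of $\inf\colon H \to H^2(S/N)$. I would first check the hypotheses of that proposition: $T^p[G,T] \leq T_0$ follows from (\ref{inclusion of S_n}) (pushed down modulo $N$), $T_0 \leq T$ is clear, and $T \leq G^p[G,G]$ holds because $S_n \leq S_2 = S^p[S,S]$ by Proposition \ref{Jennings-Brauer} and Lemma \ref{description of S2}. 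One also needs $H \cap \trg(H^1(T)^G) = \Ker(\inf\colon H \to H^2(G))$, which is exactly the identification of the kernel of $H \to H^2(S/N)$ coming from the 5-term sequence (valid since $N \leq S^p[S,S]$ makes $H^1(S/N) \to H^1(S/S_n)$'s analogue surjective). Now Theorem \ref{generalized Thm B ver2}(a) supplies precisely condition (a) of Proposition \ref{EM2 Prop 32} — the non-degenerate pairing between $S_n/NS_{n+1}$ and $\Ker(H^2(S/S_n)_{n-\Massey} \xrightarrow{\inf} H^2(S/N))$ — so we conclude that condition (c) holds: $T_0 = \bigcap \Ker(\Psi)$ over all $\Psi\colon G \to C$ lifting $G \to G/T$ through central extensions $\omega$ attached to elements of $H_0$.

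The final step is to identify these $\Psi$'s with the embeddings into $\U_{n+1}(\dbZ/p)$ named in the statement. Here is where Proposition \ref{upper triangular matrices} and Lemma \ref{kernels} enter. Each generator in $H_0$ is an $n$-fold Massey product $\langle \bar c_{11}, \dots, \bar c_{nn}\rangle = [\widetilde{\bar c_{1n}}]$ for some defining system $(\bar c_{ij})$ of size $n$ in $C^\bullet(S/S_n, \dbZ/p)$; by Proposition \ref{upper triangular matrices}, up to the harmless sign $(-1)^{n-1}$ the associated central extension $\omega$ is $0 \to \dbZ/p \to \U_{n+1}(\dbZ/p) \times_{\bar\U_{n+1}(\dbZ/p)} (S/S_n) \to S/S_n \to 1$ with respect to $\pi$ and the $\bar\gam\colon S/S_n \to \bar\U_{n+1}(\dbZ/p)$ corresponding to $(\bar c_{ij})$ — and note $\bar\gam$ is already trivial on $S_{n-1}/S_n$ so it descends appropriately. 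A homomorphism $\Psi\colon S/N \to C$ over $S/S_n$ is then precisely a continuous homomorphism $\hat\Phi\colon S/N \to \U_{n+1}(\dbZ/p) \times_{\bar\U_{n+1}(\dbZ/p)}(S/S_n)$ commuting with the projections, and by the equivalence (1)$\Leftrightarrow$(3) of Lemma \ref{kernels} (applied with $n$ there replaced by $n+1$, $M_0 = $ the relevant intersection, and using that $\Ker(\Psi)$ has the form $M/N$ with $M_0 = M \cap S_n$) the subgroups $\Ker(\Psi)$ are exactly the $M/N$ for open normal $M \supseteq N$ with $S/M$ embedding in $\U_{n+1}(\dbZ/p)$. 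Intersecting over all of them and pulling back to $S$ converts $T_0 = \bigcap\Ker(\Psi)$ into $NS_{n+1} = \bigcap M$, as desired. I expect the main obstacle to be bookkeeping in this last identification — matching the fiber-product description in Lemma \ref{kernels} (where only $M_0 = M \cap S_{n}$ is pinned down, not $M$ itself) against the intersection appearing in Proposition \ref{EM2 Prop 32}, and verifying that every embedding $S/M \hookrightarrow \U_{n+1}(\dbZ/p)$ indeed arises from a defining system whose Massey product lies in the chosen generating set $H_0$ rather than merely in $H^2(S/S_n)_{n-\Massey}$; this is where the hypothesis that the kernel is \emph{generated by} $n$-fold Massey products is used, since a priori a subgroup of a Massey-generated group need not consist of Massey products, and without it condition (c) of Proposition \ref{EM2 Prop 32} would only give the intersection over a possibly smaller family of extensions.
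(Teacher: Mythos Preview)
Your route through Proposition~\ref{EM2 Prop 32}(a)$\Rightarrow$(c) is the paper's, and the verification of its hypotheses is fine. The gap is in the last identification: you claim ``the subgroups $\Ker(\Psi)$ are exactly the $M/N$'', but this is not what Lemma~\ref{kernels} gives. The map $\hat\Phi$ lifts the projection $S/N\to S/S_n$, so $\Ker(\hat\Phi)\leq S_n/N$ automatically; Lemma~\ref{kernels} (with its $n$ replaced by $n+1$) says $\Ker(\hat\Phi)=M_0/N=(M\cap S_n)/N$, not $M/N$. Consequently condition (c) only yields
\[
NS_{n+1}=\Bigl(\bigcap M\Bigr)\cap S_n,
\]
and one still has to show $\bigcap M\leq S_n$. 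This is not bookkeeping: nothing in the cohomological package sees $M$ itself, only $M\cap S_n$.

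The paper closes this by arguing \emph{by induction on $n$}. The induction hypothesis is invoked at level $n-1$ with the auxiliary choice $N=S_n$; the Massey-kernel hypothesis there holds automatically because Theorem~\ref{generalized Thm B ver1}(b) makes $\inf\colon H^2(S/S_{n-1})_{(n-1)\text{-}\Massey}\to H^2(S/S_n)$ the zero map, so its kernel is the whole Massey subgroup (trivially generated by Massey products). This gives $S_n=\bigcap M'$ over open normal $M'\supseteq S_n$ with $S/M'\hookrightarrow\U_n(\dbZ/p)$. Since $\U_n(\dbZ/p)\hookrightarrow\U_{n+1}(\dbZ/p)$ and $N\leq S_n\leq M'$, each such $M'$ occurs among the $M$'s, whence $\bigcap M\leq S_n$ and the proof concludes. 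Your second worry, that an arbitrary embedding $S/M\hookrightarrow\U_{n+1}(\dbZ/p)$ might not come from an element of $H_0$, is by contrast harmless: the embedding manufactures a $\bar\gam$ and hence a Massey product, and the existence of the lift $\hat\Phi$ forces that product into $\Ker(\inf)$, which is where $H_0$ lives by definition.
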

\begin{proof}
We argue by induction on $n$.
For $n=1$ we have by Lemma \ref{description of S2}, $S_2=S^p[S,S]$, so $S/NS_2$ is an elementary abelian $p$-group.
Consequently, $NS_2=\bigcap M$, where $M$ ranges over all open normal subgroups of $S$ containing $N$
such that $S/M\isom\dbZ/p\isom\U_2(\dbZ/p)$.

Let $n\geq2$.
We assume the assertion for $n-1$ and prove it for $n$.

When $n\geq3$ Theorem \ref{generalized Thm B ver1}(b) implies that
\[
\inf\colon H^2(S/S_{n-1})_{(n-1)-\Massey}\to H^2(S/S_n)
\]
is the zero map, so trivially, its kernel is generated by $(n-1)$-fold Massey products.
We may therefore apply the induction hypothesis for $n-1$ and $N=S_n$ (also when $n=2$), to get
$S_n=\bigcap M$, where $M$ ranges over all open normal
subgroups of $S$ containing $S_n$ such that $S/M$ embeds as subgroup of $\U_n(\dbZ/p)$.

Next, we have already noted in the proof of Theorem \ref{generalized Thm B ver2}(b)  that (a) and (b) of
Proposition \ref{EM2 Prop 32}
hold with  $G=S/N$, $T=S_n/N$, $T_0=NS_{n+1}/N$,
$H=H^2(S/S_n)_{n-\Massey}$.
By assumption, the \textsl{set} $H_0$ of all  $n$-fold Massey products in
$\Ker(\inf\colon H\to H^2(G))$ generates this kernel.
Therefore (c) of Proposition \ref{EM2 Prop 32} also holds in this setup.

By Proposition \ref{upper triangular matrices},
the central extensions corresponding to $n$-fold Massey products in $H$ are (up to signs)
as in the lower part of the diagram
\[
\xymatrix{
&&& G\ar[ld]_{\hat\Phi}\ar[d] \\
0\ar[r] & \dbZ/p \ar[r] & \U_{n+1}(\dbZ/p)\times_{\bar \U_{n+1}(\dbZ/p)}(S/S_n)
\ar[r]^{\quad\qquad\qquad\pr_2}
& S/S_n\ar[r] &1,
}
\]
where the fiber product is with respect to the projection $\pi\colon \U_{n+1}(\dbZ/p)\to\bar \U_{n+1}(\dbZ/p)$
and some continuous homomorphism  $\bar\gam\colon S/S_n\to\bar \U_{n+1}(\dbZ/p)$.
The corresponding Massey product is in the kernel of
$\inf\colon H^2(S/S_n)\to H^2(G)$ (i.e., belongs to $H_0$)
if and only if there is a continuous homomorphism $\hat\Phi$ making
the diagram commutative \cite{Hoechsmann68}*{1.1}.
We therefore conclude from (c) of Proposition \ref{EM2 Prop 32} that
\[
NS_{n+1}/N=\bigcap\Ker(\hat\Phi),
\]
where $\hat\Phi$ ranges over all continuous homomorphisms making the diagram
commutative for some $\bar\gam$.
By Lemma \ref{kernels}, the subgroups $\Ker(\hat\Phi)$ are exactly the quotients
$(M\cap S_n)/N$,
where $M$ is a normal open subgroup of $S$ containing $N$ such that $S/M$ embeds in $\U_{n+1}(\dbZ/p)$.
Hence, $NS_{n+1}=(\bigcap M)\cap S_n$.
Since $\U_n(\dbZ/p)$ embeds as a subgroup of $\U_{n+1}(\dbZ/p)$, and by what we have seen
earlier in the proof, $S_n$ contains $\bigcap M$, so in fact $NS_{n+1}=\bigcap M$.
\end{proof}

\begin{rem}
\rm
In our general setup, the projection $S\to G$ induces an isomorphism $S/S_n\isom G/G_n$.
When it further induces an isomorphism $S/S_{n+1}\isom G/G_{n+1}$, i.e., $N\leq S_{n+1}$,
Theorem \ref{generalized Thm B ver1}(b) shows that the assumption about the kernel in
Theorem \ref{generalization of Thm A'}  is satisfied.
This example is not exhaustive (see Corollary \ref{absolute Galois groups}).
\end{rem}

\begin{rem}
\label{relaxed assumption}
\rm
As in Corollary \ref{thm B for G}, we may replace in Theorem A' the group $G$ by its maximal pro-$p$ quotient
$G(p)$ to assume that $G(p)$ (but not necessarily $G$) has  a presentation $S/N$ with $N\leq S_n$.
\end{rem}

\begin{cor}
\label{cd leq 1}
Let $G$ be a profinite group with $\cd_p(G)\leq1$.
Then $G_n=\bigcap_\rho\Ker(\rho)$, where $\rho$ ranges over all representations of $G$ in $\U_n(\dbF_p)$.
\end{cor}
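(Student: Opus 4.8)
The plan is to reduce to the maximal pro-$p$ quotient $\bar G=G(p)$ and then invoke Theorem \ref{generalization of Thm A'}. The case $n=1$ is trivial, since $\U_1(\dbF_p)$ is the trivial group and $G_1=G$; so assume $n\ge2$. As $\U_n(\dbF_p)$ is a finite $p$-group, every continuous homomorphism $\rho\colon G\to\U_n(\dbF_p)$ factors through $\bar G$. Moreover, exactly as in the proof of Corollary \ref{thm B for G}, $G/G_k\isom\bar G/\bar G_k$ canonically for every $k$, and the kernel of $G\to\bar G$ is contained in every $G_k$ (because $G/G_k$ is pro-$p$). Hence $\bigcap_\rho\Ker(\rho)$ is the full preimage in $G$ of $\bigcap_{\bar\rho}\Ker(\bar\rho)$, where $\bar\rho$ ranges over the continuous homomorphisms $\bar G\to\U_n(\dbF_p)$, and this preimage equals $G_n$ as soon as we know $\bigcap_{\bar\rho}\Ker(\bar\rho)=\bar G_n$; so it suffices to prove the corollary for $\bar G$.

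Next I would show that $\bar G$ is a free pro-$p$ group. The hypothesis $\cd_p(G)\le1$ gives $H^2(G,\dbF_p)=0$, and the $5$-term exact sequence for $1\to\Ker(G\to\bar G)\to G\to\bar G\to1$ shows, as in the proof of Corollary \ref{thm B for G}, that $\inf\colon H^2(\bar G)\to H^2(G)$ is injective; therefore $H^2(\bar G,\dbF_p)=0$, and by the cohomological criterion for freeness of pro-$p$ groups $\bar G$ is free pro-$p$ \cite{NeukirchSchmidtWingberg}*{Cor.\ 3.9.5}. Choosing a basis $A$ of $\bar G$ and letting $S=S_A$ be the free profinite group on $A$, we get $\bar G\isom S_A(p)=S/N$ with $N=\Ker(S\to S(p))$. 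Since $S/S_k$ is pro-$p$, the epimorphism $S\to S/S_k$ factors through $S(p)=S/N$, so $N\le S_k$ for all $k$; in particular $N\le S_n$, and the setup of Theorem \ref{generalization of Thm A'} is available.

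Finally I would apply Theorem \ref{generalization of Thm A'} with its running index equal to $n-1\ge1$. Because $H^2(S/N)=H^2(\bar G,\dbF_p)=0$, the kernel of $\inf\colon H^2(S/S_{n-1})_{(n-1)-\Massey}\to H^2(S/N)$ is all of $H^2(S/S_{n-1})_{(n-1)-\Massey}$, which by construction (Lemma \ref{gens for H2 Massey}) is generated by $(n-1)$-fold Massey products; thus the hypothesis of the theorem holds (vacuously when $n=2$). The theorem then gives $NS_n=\bigcap M$, with $M$ ranging over the open normal subgroups of $S$ containing $N$ for which $S/M$ embeds in $\U_n(\dbF_p)$. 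Pushing this forward along $S\to\bar G$, which has kernel $N$ and carries $S_k$ onto $\bar G_k$, yields $\bar G_n=\bigcap\bar M$ with $\bar M$ ranging over the open normal subgroups of $\bar G$ with $\bar G/\bar M$ embedding in $\U_n(\dbF_p)$, i.e.\ over the kernels of continuous homomorphisms $\bar G\to\U_n(\dbF_p)$; combined with the first paragraph this is the assertion. The one point I expect to require genuine care is the freeness of $\bar G$ — it rests on the injectivity of $\inf\colon H^2(\bar G)\to H^2(G)$ used already for Corollary \ref{thm B for G}, together with the pro-$p$ criterion for freeness — since it is precisely what furnishes a presentation $S/N$ with $S$ free \emph{profinite} and $N\le S_n$, so that Theorem \ref{generalization of Thm A'} applies verbatim; the remaining steps are routine bookkeeping with the Zassenhaus filtration.
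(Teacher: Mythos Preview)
Your argument is correct and follows essentially the same route as the paper: reduce to the maximal pro-$p$ quotient, observe that it is a free pro-$p$ group (the paper cites \cite{NeukirchSchmidtWingberg}*{Prop.\ 3.5.3 and Prop.\ 3.5.9} directly, whereas you obtain this via $H^2(\bar G)=0$ and the freeness criterion), present it as $S/N$ with $S$ free profinite and $N\leq S_k$ for all $k$, and then invoke Theorem A' (equivalently Theorem \ref{generalization of Thm A'}). Your write-up is more explicit about the bookkeeping --- in particular the index shift to $n-1$ and the push-forward from $S$ to $\bar G$ --- but there is no substantive difference from the paper's proof.
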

\begin{proof}
The maximal pro-$p$ quotient $G(p)$ of $G$ is a free pro-$p$ group
\cite{NeukirchSchmidtWingberg}*{Prop.\ 3.5.3 and Prop.\ 3.5.9}.
We may therefore replace $G$ by $G(p)$, to assume that $G$ is a free pro-$p$ group.

As $H^2(G)=0$, all Massey kernel conditions are satisfied.
Take a free profinite group $S$ on the same basis as $G$ and let  $N=\Ker(S\to S(p)=G)$.
Since $S/S_n$ has a $p$-power exponent, $N\leq S_n$ for every $n$.
Now apply Theorem A'.
\end{proof}

\section{Examples}
\label{section on examples}
We conclude by examining Theorem A' in low degrees.

\begin{exam}
\rm
When $n=1$ Theorem A' is just the elementary fact that $G^p[G,G]=\bigcap M$,  where $M$ ranges over all
open normal subgroups of $G$ with $G/M\isom\{1\},\dbZ/p$.
\end{exam}

\begin{exam}
\rm
Let $n=2$.
Assume that the kernel of  $\inf\colon H^2(S/S_2)_{2-\Massey}=H^2(G/S_2)_\dec\to H^2(G)$
is generated by cup products (see Example \ref{cup products}).

When $p=2$, $\U_3(\dbZ/2)$ is the dihedral group $D_4$ of order $8$.
Hence in this case Theorem A' asserts that
\begin{equation}
\label{theorem from EM even case}
G_3=\bigcap\Bigl\{\bar M\ \bigm|\ \bar M\trianglelefteq G, \
G/\bar M\isom\{1\}, \dbZ/2, \dbZ/4, D_4\Bigr\}.
\end{equation}
This recovers \cite{EfratMinac11a}*{Cor.\ 11.3}, which in turn generalizes \cite{MinacSpira96}*{Cor.\ 2.18} (see below).

When $p>2$,  $\U_3(\dbZ/p)$ is the unique  nonabelian group $H_{p^3}$ of order $p^3$ and exponent $p$
(also called the \textsl{Heisenberg group}).
Its subgroups are $\{1\}$, $\dbZ/p$, $(\dbZ/p)^2$   and $\U_3(\dbZ/p)$ itself.
Moreover, when $G/\bar M\isom(\dbZ/p)^2$, we may write $\bar M=\bar M_1\cap\bar M_2$
with $G/\bar M_i\isom\dbZ/p$, $i=1,2$.
Therefore
\begin{equation}
\label{theorem from EM odd case}
G_3=\bigcap\Bigl\{\bar M\ \bigm|\ \bar M\trianglelefteq G, \
G/\bar M\isom\{1\}, \dbZ/p, H_{p^3}\Bigr\}.
\end{equation}
This recovers \cite{EfratMinac11b}*{Example 9.5(1)}.
\end{exam}

Finally we recover  \cite{MinacSpira96}*{Cor.\ 2.18} and \cite{EfratMinac11b}*{Th.\ D} (see also \cite{Thong12}):

\begin{cor}
\label{absolute Galois groups}
Let $K$ be a field containing a root of unity of order $p$ and $G=G_K$ its absolute Galois group.
Then  $G_3=\bigcap\bar M$, where $\bar M$
ranges over all open normal subgroups of $G$ such that $G/\bar M$ embeds in $\U_3(\dbZ/p)$.
\end{cor}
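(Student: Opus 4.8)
The plan is to deduce Corollary \ref{absolute Galois groups} directly from Theorem A' (in the form of Theorem \ref{generalization of Thm A'} with $n=2$), reducing everything to verifying the second Massey kernel condition for $G = G_K$. First I would pass to the maximal pro-$p$ quotient $G(p)$, which is permissible by Remark \ref{relaxed assumption}, and write $G(p) = S/N$ with $S$ a free pro-$p$ group on a basis mapping onto a minimal generating set, and $N \leq S^p[S,S] = S_2$ by Lemma \ref{description of S2}. Then it suffices, by Theorem \ref{generalization of Thm A'} with $n=2$ (and noting $\U_3(\dbZ/p)$ has the subgroup structure recorded in the preceding examples, so that the $(\dbZ/p)^2$-quotients contribute nothing new), to show that $\Ker\bigl(\inf\colon H^2(S/S_2)_{2\text{-}\Massey}\to H^2(S/N)\bigr) = \Ker\bigl(\inf\colon H^2(G/G_2)_\dec\to H^2(G)\bigr)$ is generated by cup products (using Example \ref{cup products} to identify $2$-fold Massey products with cup products up to sign).

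The heart of the matter is thus the following purely cohomological statement: the subgroup of $H^2(G/G_2)$ generated by decomposable elements (cup products of degree-one classes) that inflate to zero in $H^2(G)$ is itself generated by such decomposable elements. Here $G/G_2 = G/G^p[G,G]$ is elementary abelian, $H^1(G/G_2) = H^1(G) = K^\times/(K^\times)^p$ by Kummer theory (using that $K$ contains a $p$-th root of unity), and $H^2(G) = H^2(G_K,\dbZ/p) \isom {}_p\!\operatorname{Br}(K)$. The key input is the Merkurjev--Suslin theorem \cite{MerkurjevSuslin82}: every element of ${}_p\!\operatorname{Br}(K)$ is a sum of symbol algebras, i.e.\ $H^2(G)$ is spanned by cup products from $H^1(G)$, and moreover the kernel of $H^1(G)\otimes H^1(G)\to H^2(G)$ (the relations among symbols) is generated by the ``obvious'' relations $(a)\cup(b) = -(b)\cup(a)$ and $(a)\cup(1-a) = 0$, i.e.\ by Steinberg relations. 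I would use this to show that any decomposable class in $H^2(G/G_2)$ dying in $H^2(G)$ is a $\dbZ/p$-combination of inflations of Steinberg-type relations, each of which is visibly a cup product (or a sum of cup products) in $H^2(G/G_2)_\dec$ that already inflates to zero.

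More precisely, I would argue as follows. Let $\alpha \in H^2(G/G_2)_\dec$ with $\inf(\alpha) = 0$ in $H^2(G)$; write $\alpha = \sum_i \inf(\chi_i)\cup\inf(\chi_i')$ is not automatic, so instead work at the level of the symmetric/tensor square: there is a surjection $\operatorname{Sym}^2 H^1(G/G_2) \twoheadrightarrow H^2(G/G_2)_\dec$ (cup product being graded-commutative on degree one mod $p$, with a sign correction when $p=2$), and a commutative square with $\operatorname{Sym}^2 H^1(G) \to H^2(G)$; since $H^1(G/G_2) = H^1(G)$ the two sources agree. By Merkurjev--Suslin the kernel of $\operatorname{Sym}^2 H^1(G)\to H^2(G)$ is generated by the Steinberg elements $(a)\cdot(1-a)$; each such element maps into $H^2(G/G_2)_\dec$ under $\operatorname{Sym}^2 H^1(G)\to H^2(G/G_2)_\dec$ to a decomposable class that, by construction, inflates to zero. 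Hence the kernel of $H^2(G/G_2)_\dec \to H^2(G)$ is the image of the Steinberg elements, i.e.\ is generated by decomposable classes, as required; then Theorem \ref{generalization of Thm A'} applies and, reinterpreting $NS_3/N$ as $G_3$ inside $G/N = G(p)$ and then back in $G$ via $G_k/G_{k+1}\isom G(p)_k/G(p)_{k+1}$, yields the statement.

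The main obstacle I expect is the bookkeeping around the comparison of $\operatorname{Sym}^2 H^1$ with the decomposable subgroup in characteristic $p$ — in particular handling the $p=2$ case where $\chi\cup\chi$ need not vanish and the quadratic Steinberg relation $(a)\cup(a) = (a)\cup(-1)$ must be invoked — together with checking that the Merkurjev--Suslin description of relations among symbols is being used in exactly the form needed (generation of the kernel of the norm-residue map in degree two by Steinberg relations). Everything else is a formal consequence of the already-established Theorem A' machinery (Proposition \ref{EM2 Prop 32}, Theorem \ref{generalized Thm B ver1}, Proposition \ref{upper triangular matrices}) and the subgroup structure of $\U_3(\dbZ/p) = D_4$ or $H_{p^3}$ noted in the examples above.
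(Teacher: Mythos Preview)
Your approach is essentially the same as the paper's: verify the $2$nd Massey kernel condition via Merkurjev--Suslin and invoke Theorem~A'. The paper's proof is a two-line citation---it uses only the \emph{injectivity} of the Galois symbol $K_2^M(K)/p\to H^2(G,\mu_p^{\otimes 2})$ (not the surjectivity you also mention, which is irrelevant here) and defers the deduction that $\Ker\bigl(\inf\colon H^2(G/G^p[G,G])_\dec\to H^2(G)\bigr)$ is generated by cup products to \cite{Bogomolov91} and \cite{EfratMinac11a}*{Prop.~3.2}; your sketch of that deduction via lifting to the tensor square and using that the relations in $K_2^M$ are Steinberg is exactly what those references do, and the $\operatorname{Sym}^2$/$\Lambda^2$ and $p=2$ bookkeeping you flag is handled there.
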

\begin{proof}
The injectivity of the  Galois symbol in degree $2$, which is a part of the Merkurev--Suslin theorem
(\cite{MerkurjevSuslin82}, \cite{GilleSzamuely}),
implies that the kernel of  $\inf\colon H^2(S/S^p[S,S])_\dec=H^2(G/G^p[G,G])_\dec\to H^2(G)$ is generated by cup products
 (see \cite{Bogomolov91},  \cite{EfratMinac11a}*{Prop.\ 3.2}).
Now apply Theorem A'.
\end{proof}

\begin{rem}
\rm
By \cite{EfratMinac11b} (see also \cite{CheboluEfratMinac12}*{Cor.\ 9.2}),  if $\bar S$ is a free pro-$p$ group, $N$ is a closed normal subgroup of $\bar S$ which is contained in $\bar S_3$, and $\bar S/N$ is not free pro-$p$, then $\bar S/N$ cannot be realized as the maximal pro-$p$ Galois group $G_F(p)$ of a field $F$ containing a root of unity of order $p$.
Therefore Theorem A' cannot be directly applied to describe $G_n$ for absolute Galois groups $G=G_F$ of fields $F$ as above when $n\geq4$ and $G$ is not free pro-$p$.
Yet, in the subsequent paper \cite{MinacTan13}, Min\'a\v c and T\^an show that the conclusion of Theorem A' holds in several other classes of groups, which \textsl{can} be realized as absolute Galois groups.
\end{rem}

\begin{bibdiv}
\begin{biblist}

\bib{Bogomolov91}{article}{
    author={Bogomolov, F. A.},
     title={On two conjectures in birational algebraic geometry},
 booktitle={Proc. of Tokyo Satellite conference ICM-90 Analytic and Algebraic Geometry},
    volume={},
      date={1991},
     pages={26\ndash52},
}

\bib{Bogomolov92}{article}{
    author={Bogomolov, F. A.},
     title={Abelian subgroups of Galois groups},
  language={English; Russian original},
   journal={Math. USSR, Izv.},
    volume={38},
      date={1992},
     pages={27\ndash 67},
}

\bib{CheboluEfratMinac12}{article}{
author={Chebolu, Sunil K.},
author={Efrat, Ido},
author={Min\' a\v c, J\'an},
title={Quotients of absolute Galois groups which determine the entire Galois cohomology},
journal={Math.\ Ann.},
volume={352},
date={2012},
pages={205\ndash221},
}

\bib{DixonDuSautoyMannSegal99}{book}{
title={Analytic Pro-$p$ Groups},
author={Dixon, J.D.},
author={du Sautoy, Marcus},
author={Mann, Avinoam},
author={Segal, D.},
publisher={Cambridge University Press},
series={Cambridge Stud. Adv. Math.},
volume={61},
date={1999},
label={DDMS}
}

\bib{Dwyer75}{article}{
   author={Dwyer, William G.},
   title={Homology, Massey products and maps between groups},
   journal={J. Pure Appl. Algebra},
   volume={6},
   date={1975},
   pages={177\ndash190},
}

\bib{EfratMassey}{article}{
author={Efrat, Ido},
title={Uniqueness of Massey products},
date={2012},
journal={Comm. Algebra},
volume={42},
date={2014},
pages={4609\ndash4618},
}

\bib{EfratMinac11a}{article}{
author={Efrat, Ido},
author={Min\' a\v c, J\'an},
title={On the descending central sequence of absolute Galois groups},
journal={Amer.\ J.\ Math.},
volume={133},
date={2011},
pages={1503\ndash1532},
}

\bib{EfratMinac11b}{article}{
author={Efrat, Ido},
author={Min\' a\v c, J\'an},
title={Galois groups and cohomological functors},
date={2011},
status={to appear},
eprint={arXiv:1103.1508v1},
}

\bib{Efrat13}{article}{
author={Efrat, Ido},
title={Filtrations of free groups as intersections},
date={2013},
status={to appear},
eprint={arXiv:1312.1811v2},
}

\bib{Fenn83}{book}{
author={Fenn, Roger A.},
title={Techniques of Geometric Topology},
Series={London Math.\ Society Lect. Note Series},
volume={57},
publisher={Cambridge Univ. Press},
date={1983},
place={Cambridge}
}

\bib{FennSjerve84}{article}{
   author={Fenn, Roger},
   author={Sjerve, Denis},
   title={Basic commutators and minimal Massey products},
   journal={Canad. J. Math.},
   volume={36},
   date={1984},
   pages={1119--1146},
}

\bib{FriedJarden08}{book}{
   author={Fried, Michael D.},
   author={Jarden, Moshe},
   title={Field arithmetic},
   edition={3},
   publisher={Springer},
   place={Berlin},
   date={2008},
   pages={xxiv+792},
}

\bib{Gaertner12}{article}{
author={G\"artner, Jochen},
title={Higher Massey products in the cohomology of mild pro-$p$-groups},
status={preprint},
eprint={arXiv:1212.2118 },
date={2012},
}

\bib{GaoLeepMinacSmith03}{article}{
   author={Gao, Wenfeng},
   author={Leep, David B.},
   author={Min{\'a}{\v{c}}, J{\'a}n},
   author={Smith, Tara L.},
   title={Galois groups over nonrigid fields},
   conference={
      title={Valuation theory and its applications, Vol. II (Saskatoon, SK,  1999)},
   },
   book={
      series={Fields Inst. Commun.},
      volume={33},
      publisher={Amer. Math. Soc.},
      place={Providence, RI},
   },
   date={2003},
   pages={61--77},
}

\bib{GilleSzamuely}{book}{
 author={Gille, Philippe},
   author={Szamuely, Tam{\'a}s},
   title={Central Simple Algebras and Galois Cohomology},
   publisher={Cambridge University Press},
   place={Cambridge},
   date={2006},
   pages={xii+343},
}

\bib{Hoechsmann68}{article}{
  author={Hoechsmann, Klaus},
  title={Zum Einbettungsproblem},
  journal={J. reine angew. Math.},
  volume={229},
  date={1968},
  pages={81\ndash106},
}

\bib{HopkinsWickelgren12}{article}{
author={Hopkins, Michael},
author={Wickelgren, Kirsten},
title={Splitting varieties for triple Massey products},
status={preprint},
eprint={arXiv:1210.4964},
date={2012},
}

\bib{Jennings41}{article}{
   author={Jennings, S. A.},
   title={The structure of the group ring of a $p$-group over a modular field},
   journal={Trans. Amer. Math. Soc.},
   volume={50},
   date={1941},
   pages={175--185},
}

\bib{Koch02}{book}{
   author={Koch, Helmut},
   title={Galois Theory of $p$-Extensions},
   publisher={Springer},
   place={Berlin},
   date={2002},
   pages={xiv+190},
}

\bib{Kraines66}{article}{
author={Kraines, David},
title={Massey higher products},
journal={Trans.\ Amer.\ Math.\ Soc.},
volume={124},
date={1966},
pages={431\ndash449},
}

\bib{Labute05}{article}{
   author={Labute, John},
   title={Mild pro-$p$-groups and Galois groups of $p$-extensions of $\mathbb{Q}$},
   journal={J. reine angew. Math.},
   volume={596},
   date={2006},
   pages={155--182},
}

\bib{LabuteMinac11}{article}{
   author={Labute, John},
   author={Min{\'a}{\v{c}}, J{\'a}n},
   title={Mild pro-2-groups and 2-extensions of $\mathbb{Q}$ with restricted  ramification},
   journal={J. Algebra},
   volume={332},
   date={2011},
   pages={136--158},
}

\bib{Lazard54}{article}{
   author={Lazard, Michel},
   title={Sur les groupes nilpotents et les anneaux de Lie},
   journal={Ann. Sci. Ecole Norm. Sup. (3)},
   volume={71},
   date={1954},
   pages={101--190},
}

\bib{Lazard65}{article}{
   author={Lazard, Michel},
   title={Groupes analytiques $p$-adiques},
   journal={Inst. Hautes \'Etudes Sci. Publ. Math.},
   number={26},
   date={1965},
   pages={389--603},
}

\bib{MagnusKarrassSolitar}{book}{
   author={Magnus, Wilhelm},
   author={Karrass, Abraham},
   author={Solitar, Donald},
   title={Combinatorial group theory: Presentations of groups in terms of   generators and relations},
   publisher={Interscience Publishers [John Wiley \& Sons, Inc.], New  York-London-Sydney},
   date={1966},
   pages={xii+444},
}

\bib{May69}{article}{
author={May, J. Peter},
title={Matric Massey products},
journal={J. Algebra},
volume={12},
pages={533\ndash 568},
date={1969}
}

\bib{MerkurjevSuslin82}{article}{
    author={Merkurjev, A. S.},
    author={Suslin, A. A.},
     title={$K$-cohomology of Severi-Brauer varieties and the norm residue homomorphism},
  language={Russian},
   journal={Izv. Akad. Nauk SSSR Ser. Mat.},
    volume={46},
      date={1982},
     pages={1011\ndash 1046},
    translation={
        journal={Math. USSR Izv.},
         volume={21},
           date={1983},
   pages={307\ndash 340},
} }

\bib{MinacSpira96}{article}{
author={Min{\'a}{\v {c}}, J{\'a}n},
author={Spira, Michel},
title={Witt rings and Galois groups},
journal={Ann. Math.},
volume={144},
date={1996},
pages={35\ndash60},
label={MSp96},
}

\bib{MinacTan13}{article}{
author={Min{\'a}{\v {c}}, J{\'a}n},
author={T\^an, Nguyen Dui},
title={The Kernel Unipotent Conjecture and the vanishing of Massey products for odd rigid fields},
date={2013},
status={to appear},
eprint={arXiv:1312.2655 },
contribution={with an appendix by I.\ Efrat, J.\ Min\'a\v c, and N.D.\ T\^an},
}

\bib{Morishita04}{article}{
author={Morishita, Masanori},
title={Milnor invariants and Massey products for prime numbers},
journal={Compositio Math.},
volume={140},
date={2004},
pages={69\ndash83},
}

\bib{Morishita12}{book}{
   author={Morishita, Masanori},
   title={Knots and primes},
   series={Universitext},
   publisher={Springer},
   place={London},
   date={2012},
   pages={xii+191},
}

\bib{NeukirchSchmidtWingberg}{book}{
  author={Neukirch, J{\"u}rgen},
  author={Schmidt, Alexander},
  author={Wingberg, Kay},
  title={Cohomology of Number Fields, Second edition},
  publisher={Springer},
  place={Berlin},
  date={2008},
}

\bib{Quillen68}{article}{
   author={Quillen, Daniel G.},
   title={On the associated graded ring of a group ring},
   journal={J. Algebra},
   volume={10},
   date={1968},
   pages={411--418},
}

\bib{Thong12}{article}{
   author={Nguyen Quang Do, Thong},
   title={\'Etude Kummerienne de la $q$-suite centrale descendante d'un group de Galois},
     book={
      series={Publ. Math. Besan\c con Alg\`ebre Th\'eorie Nr.},
      volume={2012/2},
      publisher={Presses Univ. Franche-Comt\'e, Besan\c con},
   },
   date={2012},
   pages={121\ndash137},
}

\bib{RibesZalesskii10}{book}{
author={Ribes, Luis},
author={Zalesskii, Pavel},
title={Profinite Groups},
publisher={Springer},
date={2010},
}

\bib{Sharifi07}{article}{
   author={Sharifi, Romyar T.},
   title={Massey products and ideal class groups},
   journal={J. reine angew. Math.},
   volume={603},
   date={2007},
   pages={1\ndash33},
}

\bib{Stallings65}{article}{
   author={Stallings, John},
   title={Homology and central series of groups},
   journal={J. Algebra},
   volume={2},
   date={1965},
   pages={170--181},
}

\bib{Vogel05}{article}{
author={Vogel, Denis},
title={On the Galois group of $2$-extensions with restricted ramification},
journal={J.\ reine angew.\ Math.},
volume={581},
date={2005},
pages={117\ndash150},
}

\bib{Wickelgren09}{thesis}{
author={Wickelgren, Kirsten},
title={Lower central series obstructions to homotopy sections of curves over number fields},
type={Ph.D.\ Thesis},
place={Stanford University},
date={2009},
}

\bib{Wickelgren12}{article}{
author={Wickelgren, Kirsten},
title={$n$-Nilpotent obstructions to $\pi_1$-sections of $\mathbb{P}^1\setminus\{0,1,\infty\}$ and Massey products},
conference={
title={Conference on Galois-Teichm\"uller Theory and Arithmetic Geometry (H.\ Nakamura et al, eds.)},
address={Kyoto},
date={2010}},
book={
series={Advanced studies in pure mathematics},
volume={63},
publisher={Mathematical Society of Japan},
},
date={2012},
pages={579\ndash600},
}

\bib{Zassenhaus39}{article}{
author={Zassenhaus, H.},
title={Ein Verfahren, jeder endlichen $p$-Gruppe einen Lie-Ring mit der Charakteristik $p$ zuzuordnen},
journal={Abh. Math. Semin. Univ. Hamburg},
volume={13},
date={1939},
pages={200\ndash207},
}

\end{biblist}
\end{bibdiv}
\end{document}